\theoremstyle{plain}
  \newtheorem{lemma}{Lemma}[section]
  \newtheorem{proposition}{Proposition}[section]
  \newtheorem{corollary}{Corollary}[section]
\theoremstyle{definition}
  \newtheorem{definition}{Definition}[section]
  \newtheorem{example}{Example}[section]
  \newtheorem{remark}{Remark}[section]
\newif\if@borderstar
\def\bordermatrix{\@ifnextchar*{%
\@borderstartrue\@bordermatrix@i}{\@borderstarfalse\@bordermatrix@i*}%
}
\def\@bordermatrix@i*{\@ifnextchar[{\@bordermatrix@ii}{\@bordermatrix@ii[()]}}
\def\@bordermatrix@ii[#1]#2{%
\begingroup
\m@th\@tempdima8.75\p@\setbox\z@\vbox{%
\def\cr{\crcr\noalign{\kern 2\p@\global\let\cr\endline }}%
\ialign {$##$\hfil\kern 2\p@\kern\@tempdima &\thinspace %
\hfil $##$\hfil &&\quad\hfil $##$\hfil\crcr\omit\strut %
\hfil\crcr\noalign{\kern -\baselineskip}#2\crcr\omit %
\strut\cr}}%
\setbox\tw@\vbox{\unvcopy\z@\global\setbox\@ne\lastbox}%
\setbox\tw@\hbox{\unhbox\@ne\unskip\global\setbox\@ne\lastbox}%
\setbox\tw@\hbox{%
$\kern\wd\@ne\kern -\@tempdima\left\@firstoftwo#1%
\if@borderstar\kern2pt\else\kern -\wd\@ne\fi%
\global\setbox\@ne\vbox{\box\@ne\if@borderstar\else\kern 2\p@\fi}%
\vcenter{\if@borderstar\else\kern -\ht\@ne\fi%
\unvbox\z@\kern-\if@borderstar2\fi\baselineskip}%
\if@borderstar\kern-2\@tempdima\kern2\p@\else\,\fi\right\@secondoftwo#1$%
}\null \;\vbox{\kern\ht\@ne\box\tw@}%
\endgroup
}
\begin{document}
\begin{frontmatter}

\title{Some neighborhood-related fuzzy covering-based rough set models and their applications for decision making}
\author{Gongao Qi\fnref{label1}}
\author{Bin Yang\fnref{label1}\corref{cor1}}
\ead{binyang0906@whu.edu.cn,binyang0906@nwsuaf.edu.cn}
\author{Wei Li\fnref{label1}}
\address[label1]{College of Science, Northwest A \& F University, Yangling 712100, PR China}
\cortext[cor1]{Corresponding author.}

\begin{abstract}	
 Fuzzy rough set (FRS) has a great effect on data mining processes and the fuzzy logical operators play a key role in the development of FRS theory. In order to further generalize the FRS theory to more complicated data environments, we firstly propose four types of fuzzy neighborhood operators based on fuzzy covering by overlap functions and their implicators in this paper. Meanwhile, the derived fuzzy coverings from an original fuzzy covering are defined and the equalities among overlap function-based fuzzy neighborhood operators based on a finite fuzzy covering are also investigated. Secondly, we prove that new operators can be divided into seventeen groups according to equivalence relations, and the partial order relations among these seventeen classes of operators are discussed, as well. Go further, the comparisons with $ t$-norm-based fuzzy neighborhood operators given by D'eer et al. are also made and two types of neighborhood-related fuzzy covering-based rough set models, which are defined via different fuzzy neighborhood operators that are on the basis of diverse kinds of fuzzy logical operators proposed. Furthermore, the groupings and partially order relations are also discussed. Finally, a novel fuzzy TOPSIS methodology is put forward to solve a  biosynthetic nanomaterials select issue, and the rationality and enforceability of our new approach is verified by comparing its results with nine different methods.
\end{abstract}

\begin{keyword}
    Fuzzy rough sets; Fuzzy covering; Overlap functions; Neighborhood operators; Multi-attribute decision-making
\end{keyword}

\end{frontmatter}

\section{Introduction}\label{section0}

In order to deal with incomplete and indistinguishable information in the real world, Pawlak \cite{pawlak1982rough} established a useful mathematical tool that is rough set theory (RST) in 1982. In the original theory, given an arbitrary subset of a universe $\mathbb{U}$, which can be approximately characterized via two definable sets that are called lower and upper approximations \cite{degang2006rough}.

In Pawlak's model \cite{pawlak1982rough}, an equivalence class $\mathbb{E}$ is used to define to express the indiscernible relations among pairs of elements in the universe $\mathbb{U}$. However, when this theory applies to practical problems, the equivalence relation seems to be so restricted and limited.
During recent decades, to solve more complicated problems, scholars have generalized the original model via replacing the equivalence class $\mathbb{E}$ with other more sophisticated but more general concepts \cite{pomykala1987approximation}. By means of extending the partition to a covering, the notion of covering-based rough set (CRS) was firstly proposed which is investigated by Zakowski in \cite{zakowski1983approximations}. Furthermore, scholars have also explored other forms of CRS models\cite{yao2012covering}. In particular, through further researched, via the concepts of neighborhood and granularity, two couples of dual approximation operators given by Pomykala \cite{pomykala1987approximation,pomykala1988definability}, Yao \cite{yao1998relational} put forward a novel concept named neighborhood-related CRS model.

Although RST, such as CRS, is an efficient and effective tool for dealing with discrete data, it is weak in processing real-valued data sets on the applications because of the values of the attributes, which can be described as symbolic and real-valued \cite{jensen2004fuzzy}. As an effective method to do with the vague issues and indiscernibility in the real world, fuzzy set theory (FST) has been proposed by Zadeh \cite{zadeth1965fuzzy} in 1965, and this theory is a useful tool for overcoming the above problems. These two theories are related but different and complementary \cite{yao1998comparative}, and nowadays, RST and FST are the two major methods that are used to deal with uncertainty and incomplete data in information systems. During the past two decades, scholars have taken great interest in the connection between rough sets and fuzzy sets, and they have made attempts to research in different and relevant mathematical fields\cite{yao1998comparative,wang2007learning}.
Dubois and Prade firstly introduced the notion of fuzzy rough sets \cite{dubois1990rough} in 1990. They argued to combine these two models of uncertainty (that is vagueness and coarseness) instead of competing with each other on the same issues. As a generalization of Pawlak's rough set, the CRS was also introduced into fuzzy environment. However, the early works have not defined clearly what a fuzzy covering-based fuzzy rough set (FCRS) is, such as  De Cock et al. \cite{de2004fuzzy} and  Deng \cite{deng2007novel}, the fuzzy coverings are studied as special cases or special fuzzy binary relations. Through the efforts of scholars, the original definition of fuzzy covering was summarized and proposed (see \cite{feng2012reduction,li2008generalized}):

Let $\mathbb{U}$ be an an arbitrary universal set, and $\mathscr{F}(\mathbb{U})$ be the fuzzy power set of $\mathbb{U}$. We call $\widehat{\mathbf{C}}=\{C_1, C_2, ..., C_m\}$, with $C_i\in \mathscr{F}(\mathbb{U})(i=1,2, ..., m),$ a fuzzy covering of $\mathbb{U}$, if $(\bigcup^{m}_{i=1}C_i)(x)=1$ for each $x\in \mathbb{U}$.

In line with this definition, Ma \cite{ma2016two} put forward a new generalization named fuzzy $\beta$-covering via substituting a parameter $\beta$ $(0<\beta\leq 1)$ for $1$. Besides, Yang and Hu \cite{yang2017some} also
proposed some new models based on fuzzy $\beta$-coverings and discussed their properties. Especially, the neighborhood operators in \cite{yao2012covering} were introduced into fuzzy environment by D'eer et al. \cite{d2017fuzzy} via $t$-norms and their implicators.

On the other hand, the applications of fuzzy rough set theory are very extensive which is always used to handle decision-making issues \cite{jiang2018covering}, attribute reductions \cite{wang2019fuzzy}, data mining, etc. For decision-making theory \cite{pedrycz2011risk}, multi-attribute decision-making (MADM) issues are important branches which are usually handled by the aggregation operator method \cite{yager1988ordered} , the TOPSIS method \cite{hwang1981methods}, the TODIM method \cite{llamazares2018analysis}, and so on. Introducing conventional TOPSIS method into fuzzy environment \cite{chen2000extensions} greatly expands the range of data that can be processed. Zhang et al. \cite{zhang2019topsis} even constructed the TOPSIS method based on the FCRS models with the help of the fuzzy neighborhood operators proposed by D'eer et al. \cite{d2017fuzzy}.

Nevertheless, with the development of society, the information to be disposed of has become even more sophisticated. For some data that cannot be randomly aggregated, the normal methods do not get the desired result.

In order to deal with several actual applications issues, such as image processing, decision making classifications, and so on. Bustince et al. \cite{bustince2010overlap} summarized and put forward the axiomatic definition for a kind of special binary aggregation function which is named overlap function. Comparing overlap function with $t$-norm, as two different types of aggregation functions, they are similar but different. Overlap function is not strongly required associativity property because of its birth background, which is mainly applied in classification issues, image processing, and some special decision making problems, the above features make overlap function more complicated but more flexible than $t$-norm. When an overlap function $\mathbb{O}$ satisfies the exchange principle and the boundary condition satisfies $\mathbb{O}(x,1)=x$ for all $x\in [0,1]$, it can be seen as a $t$-norm, certainly, which cannot be seen as the general condition. The  Venn diagram (Figure \ref{fig:ee}) intuitively depicts the connection between overlap functions and $t$-norms.
 \begin{figure}[h]
  \centering
  \includegraphics[width=12cm]{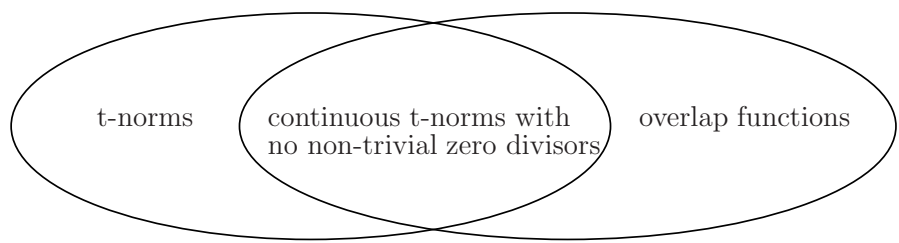}\\
\caption{Relationship between $t$-norms and overlap functions
 \cite{qiao2019distributive}}\label{fig:ee}
\end{figure}

Owing to the peculiarity and the practicability of overlap functions, scholars have researched both actual applications and theory of the overlap function. In terms of theoretical extension, Bedregal et al. \cite{bedregal2013new} put forward some conclusions on overlap functions and grouping function such as migrativity, homogeneity, idempotency and the existence of generators. In particular, Zhou and Yan \cite{zhou2021migrativity} also studied migrativity of overlap functions but over uninorms. Dimuro et al. \cite{dimuro2014archimedean} have introduced the notion of Archimedean overlap function. In \cite{wang2019modularity}, Wang and Liu studied the modularity condition of overlap and grouping functions and discussed the modularity equation among different kinds of aggregation functions. In addition, the derived concepts, such as implication operators \cite{dimuro2015residual} and additive generators \cite{dimuro2016additive} related to overlap functions and their dual functions have been investigated in recent years. Furthermore, the expanding concepts have also been researched which are induced from the original notion and the investigations include the interval overlap functions \cite{cao2018interval}, the binary
relation derived from overlap functions \cite{qiao2019binary}, distributive equations \cite{zhang2021distributivity}, and other different kinds of overlap functions \cite{da2020general}.
The practical applications of overlap functions are the other research hot topics. For instance, these concepts play key roles in image processing \cite{jurio2013some}, information classification \cite{elkano2016fuzzy}, decision making analysis \cite{bustince2011grouping}, and fuzzy community \cite{gomez2016new} and so on.

On the basis of the comparison between overlap function and $t$-norm, we conclude that the overlap functions without associative law are more complex but also more flexible. However, although FCRS theory has led to many effective methods and models in decision making problems, most of they are on the basis of $t$-norms which may be useless for the information without the exchange principle, in addition, the overlap function-based models which can deal with the above data efficiently are far less than $t$-norm-based ones. Therefore, to investigate the related generalization is necessary. The more details of research motivations for this paper are illustrated as below:
\begin{enumerate}
  \item[(1)] A continuation for the work of D'eer et al. \cite{d2017fuzzy}. The $t$-norm is one of the most common fuzzy logical operators in fuzzy environment, according to the comparison between overlap functions and $t$-norms, the overlap functions have many similar characteristics to $t$-norms and they can be transformed into each other under certain cases. Yet, for normal overlap functions, most of them do not satisfy the exchange principle, which leads their structure to the more sophisticated but more in line with the actual situation. For the practical application, such as image process, the overlap function-based operators are more suitable than $t$-norm-based ones. In addition to considering practical application aspects, we are interested in replacing the basic fuzzy logical operators of neighborhood operators, which may cause tremendous changes for the properties of fuzzy neighborhood operators and even the neighborhood-related FCRS models.
  \item[(2)]A generation for the fuzzy TOPSIS method. Like the above we discussed, along with the development of society, the data to be handled become even more complex. For great majority of TOPSIS methods, which are unable to efficaciously deal with the MADM issues with the data that cannot be random aggregation. However, the overlap function based methods can process this type of information effectively while preserving its characteristics, but it is not common to have the models based on overlap functions. To offset the lack of this field, we construct a series of fuzzy rough set models which are based on overlap function and its implicator to build up the new fuzzy TOPSIS method.
  \item[(3)] In order to certificate the universality of the models based on overlap function. Besides the models based on the neighborhood operators defined by us, we also propose a different type of neighborhood-related FCRS models which are established by the operators given in \cite{d2017fuzzy} to compare and illustrate that for conventional MADM problems, the overlap function-based models can solve as well as the traditional models based on $t$-norm.
\end{enumerate}

The following text is the outline of this paper. In Section \ref{Section 2}, we recall some basic definitions and conclusions used in this paper. This section introduces the concepts of fuzzy coverings and fuzzy neighborhood system. And then, the definitions and properties of the overlap function and its implication are discussed as well. In Section \ref{Section 3}, we give the definitions of four original overlap function-based fuzzy neighborhood operators and discuss some properties of them. The six derived fuzzy coverings are also introduced. In Section \ref{Section 4}, combining four original fuzzy neighborhood operators with six derived fuzzy coverings, we study the equalities among all twenty-four overlap function-based fuzzy neighborhood operators and group them into seventeen groupings. On the basis of the groupings, the partial order relations among new fuzzy neighborhood operators are discussed and the differences between $t$-norm-based fuzzy neighborhood operators and overlap function-based ones are studied by comparing these two kinds of operators. In Section \ref{5c}, two types of neighborhood-related FCRS models are defined and we classify them by the newly proposed partially order relations. Furthermore, the properties of approximation operators are also investigated. In light of the models put forward in the previous section, a new TOPSIS method is proposed  for the MADM problem without known weights in Section \ref{dm}. The detailed decision process and related algorithms are given in this part, as well. Finally, we give the conclusions and state future work in Section \ref{Section 6}.

\section{Preliminaries}\label{Section 2}
    In this section, we recall some preliminary concepts and results which
    are necessary for the research in this paper.

\subsection{Fuzzy covering and fuzzy neighborhood operator}
Throughout this paper, let $\mathbb{U}$ be a non-empty set called universe, and $\mathscr{F}(\mathbb{U})$ be a collection of fuzzy subsets on universe $\mathbb{U}$.
   \begin{definition}(\cite{d2017fuzzy})\label{D1}
   	Let $\mathbb{U}$ be a universe and $I$ be an (infinite) index set. For a family of fuzzy sets $$\mathscr{C}=\{K_{i}\in \mathscr{F}(\mathbb{U}):K_{i} \neq \emptyset,i\in I \},$$
   if there exists a $K\in \mathscr{C}$ such as $K(x)=1$ for all $x\in \mathbb{U}$, then $\mathscr{C}$ is called a fuzzy covering on $\mathbb{U}$.
\end{definition}

The above definition shows that for an infinite coverings, $\forall x\in \mathbb{U}$, there exists $K\in \mathscr{C}$ such that $x\in K$.
\begin{definition}(\cite{d2017fuzzy})
Let $\mathbb{U}$ be a universe. Then a mapping $N:\mathbb{U}\rightarrow \mathscr{F}(\mathbb{U})$ is called a fuzzy neighborhood operator.
\end{definition}

For each $x\in \mathbb{U}$, $N(x)\in \mathscr{F}(\mathbb{U})$ is associated with a fuzzy neighborhood operator.

In equivalent conditions, every fuzzy neighborhood operator $N$ on $\mathbb{U}$ is in one-to-one correspondence with a fuzzy binary relation $R$ on $\mathbb{U}$, i.e., $N(x)(y)=R(x,y)$ for all $x,y\in \mathbb{U}$. Hence, we can consider some properties of fuzzy neighborhood operator which are associated with the similar properties of the corresponding fuzzy binary relation.
\begin{definition}(\cite{d2017fuzzy})
	Let $\mathbb{U}$ be a universe and $N$ be a fuzzy neighborhood operator on $ \mathbb{U}$.
\begin{enumerate}
\item[(1)]
$N$ is reflexive $\Longleftrightarrow$ $N(x)(x)=1$ for all $x \in \mathbb{U}$;
\item[(2)]
$N$ is symmetric $\Longleftrightarrow$ $N(x)(y)=N(y)(x)$ for all $x, y \in \mathbb{U}$.

\end{enumerate}

  \end{definition}

\begin{definition}(\cite{d2017fuzzy})
Let $\mathscr{C}$ be a fuzzy covering on $\mathbb{U}$. For all $x \in \mathbb{U}$,$$\mathbb{C}(\mathscr{C},x)=\{K\in \mathscr{C}:K(x)>0\}$$ is called the $\mathit{fuzzy}$ $\mathit{neighborhood}$ $\mathit{system}$ of $x$;
$$md(\mathscr{C},x)=\{K\in \mathbb{C}(\mathscr{C},x):(\forall S\in \mathbb{C}(\mathscr{C},x))(S(x)=K(x),S\subseteq K\Rightarrow S=K)\}.$$
is called the $\mathit{fuzzy}$ $\mathit{minimal}$ $\mathit{description}$ of $x$;
	$$MD(\mathscr{C},x)=\{K\in \mathbb{C}(\mathscr{C},x):(\forall S\in \mathbb{C}(\mathscr{C},x))(S(x)=K(x),S\supseteq  K\Rightarrow S=K)\}$$ is called the $\mathit{fuzzy}$ $\mathit{maximal}$ $\mathit{description}$ of $x$.

\end{definition}

Based on the definition of fuzzy covering, it is easy to find that $\mathbb{C}(\mathscr{C},x)\neq \emptyset$. Meanwhile, we can see that $\mathbb{C}(\mathscr{C},x)$, $md(\mathscr{C},x)$ and $MD(\mathscr{C},x)$ are all collections of fuzzy sets. Especially, $md(\mathscr{C},x)\subseteq \mathbb{C}(\mathscr{C},x)$ and $MD(\mathscr{C},x)\subseteq \mathbb{C}(\mathscr{C},x)$ are hold for any $x\in \mathbb{U}$.
    \begin{lemma}(\cite{d2017fuzzy})\label{2.1}
    	Let $\mathscr{C}$ be a $\mathit{fuzzy}$ $\mathit{covering}$ on $\mathbb{U}$, which such that any $\mathit{descending}$ ($\mathit{resp}.$ $\mathit{ascending}$) chain is $\mathit{closed}$ under $\mathit{infimum}$ ($\mathit{resp}$. $\mathit{supremum}$), i.e., for any set $\{K_{i}\in \mathscr{C}:i\in I\}$ with $K_{i+1}\subseteq K_{i}$ ($\mathit{resp}$. $K_{i+1}\supseteq K_{i}$). Then$$\underset{i\in I}{\inf} K_{i}=\underset{i\in I}{\bigcap}K_i\in \mathscr{C}\left ( \mathit{resp}. \underset{i\in I}{\sup} K_{i}=\underset{i\in I}{\bigcup} K_{i}\in \mathscr{C} \right ).$$
    	Let $K$ be a fuzzy set which is in the fuzzy neighborhood system $\mathbb{C}(\mathscr{C},x)$. Then there exist $K_{1}\in md(\mathscr{C},x)$($\mathit{resp}$. $K_{2}\in MD(\mathscr{C},x)$) such that $K_{1}(x)=K(x)$ and $K_{1}\subseteq K $ ($\mathit{resp}$. $K_{2}(x)=K(x)$ and $K\supseteq K_{2}$).

    \end{lemma}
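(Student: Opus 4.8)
The plan is to prove the operative content of this lemma — the existence of a minimal description $K_1$ and a maximal description $K_2$ refining a given $K$ — by a Zorn's lemma argument, treating the displayed closure property as the standing hypothesis on $\mathscr{C}$ (the identity $\inf_{i\in I}K_i=\bigcap_{i\in I}K_i$ is just the pointwise formula for infima of fuzzy sets, and ``$\in\mathscr{C}$'' is precisely the assumed closure). The two assertions are dual, so I would carry out the minimal-description case in full and obtain the maximal-description case by reversing all inclusions and exchanging infima for suprema.

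First I would fix $x\in\mathbb{U}$ and $K\in\mathbb{C}(\mathscr{C},x)$, so in particular $K(x)>0$, and introduce the candidate poset
\[
\mathscr{A}=\{\,S\in\mathbb{C}(\mathscr{C},x): S(x)=K(x)\ \text{and}\ S\subseteq K\,\},
\]
ordered by fuzzy set inclusion. It is nonempty since $K\in\mathscr{A}$, and the target is a minimal element of $\mathscr{A}$, which I will show must lie in $md(\mathscr{C},x)$.

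Next I would verify the hypothesis of Zorn's lemma in its minimal-element form: every chain in $\mathscr{A}$ has a lower bound inside $\mathscr{A}$. Given a chain $\{K_i:i\in I\}\subseteq\mathscr{A}$, it is a descending chain in $\mathscr{C}$, so the closure hypothesis yields $\bigcap_{i\in I}K_i=\inf_{i\in I}K_i\in\mathscr{C}$. Since infima of fuzzy sets are computed pointwise and each $K_i(x)=K(x)>0$, we get $\left(\bigcap_{i\in I}K_i\right)(x)=K(x)>0$, hence $\bigcap_{i\in I}K_i\in\mathbb{C}(\mathscr{C},x)$; and $K_i\subseteq K$ for all $i$ gives $\bigcap_{i\in I}K_i\subseteq K$. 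Thus the intersection lies in $\mathscr{A}$ and is the required lower bound, and Zorn's lemma produces a minimal element $K_1\in\mathscr{A}$. To finish, I would check $K_1\in md(\mathscr{C},x)$: if $S\in\mathbb{C}(\mathscr{C},x)$ satisfies $S(x)=K_1(x)$ and $S\subseteq K_1$, then $S(x)=K(x)$ and $S\subseteq K_1\subseteq K$, so $S\in\mathscr{A}$ with $S\subseteq K_1$, whence minimality forces $S=K_1$; this is exactly the defining condition for $md(\mathscr{C},x)$, and by construction $K_1(x)=K(x)$, $K_1\subseteq K$. The maximal case runs symmetrically over $\mathscr{B}=\{S\in\mathbb{C}(\mathscr{C},x):S(x)=K(x),\ S\supseteq K\}$, bounding ascending chains by $\bigcup_{i\in I}K_i\in\mathscr{C}$ and extracting a maximal element $K_2\in MD(\mathscr{C},x)$.

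The main obstacle I anticipate is the chain bookkeeping rather than any single computation: one must confirm that the closure hypothesis, phrased for families with $K_{i+1}\subseteq K_i$, really supplies a lower (resp. upper) bound for an arbitrary totally ordered subfamily so that Zorn's lemma is applicable, and that passing to the pointwise infimum (resp. supremum) preserves both membership in $\mathbb{C}(\mathscr{C},x)$ — i.e. the value at $x$ remains equal to $K(x)>0$ — and the inclusion $\subseteq K$ (resp. $\supseteq K$). Once those two preservation facts are in hand, the implication from minimality (resp. maximality) to membership in $md$ (resp. $MD$) is a short unwinding of the definitions.
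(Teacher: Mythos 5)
Your proof is correct. Note, however, that the paper contains no proof of this statement to compare against: the lemma is quoted from D'eer et al.\ \cite{d2017fuzzy}, and the authors add only the remark that it holds automatically when $\mathscr{C}$ is finite (the case actually used throughout the paper). Your Zorn's-lemma argument --- working in the poset $\mathscr{A}=\{S\in\mathbb{C}(\mathscr{C},x):S(x)=K(x),\ S\subseteq K\}$, bounding a chain below by its intersection, which the closure hypothesis keeps inside $\mathscr{C}$, which keeps the value $K(x)>0$ at $x$, and which stays inside $K$, and then unwinding that a minimal element of $\mathscr{A}$ lies in $md(\mathscr{C},x)$ --- is the standard proof and is essentially the one in the cited source, with the dual argument over $\mathscr{B}=\{S\in\mathbb{C}(\mathscr{C},x):S(x)=K(x),\ S\supseteq K\}$ giving the $MD(\mathscr{C},x)$ case. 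The one point you rightly flag should indeed be made explicit: the hypothesis, though phrased for families indexed so that $K_{i+1}\subseteq K_i$, must be read as closure under infima (resp.\ suprema) of arbitrary totally ordered subfamilies of $\mathscr{C}$, since Zorn's lemma needs a bound for every chain, not only for countable ones; under that reading your two preservation checks are exactly what is required and the argument is complete.
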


    	Note that, when the fuzzy covering $\mathscr{C}$ is finite, the Lemma \ref{2.1} always holds.

\subsection{Overlap function and $R_{O}$-implication}
In this part, we recall some concepts of overlap function and $R_{O}$-implication that are necessary for this paper.

\begin{definition}(\cite{bustince2010overlap})\label{D2.6}
	If a bivariate function $O$: $[0, 1]^{2}\longrightarrow[0, 1]$ satisfies the following conditions:
	\begin{flushleft}
		(O1) $O$ is commutative;\\
		(O2) $O(x, y)=0$ $\Longleftrightarrow$ $xy=0$ for any $x,y\in [0,1]$;\\
		(O3) $O(x, y)=1$ $\Longleftrightarrow$ $xy=1$ for any $x,y\in [0,1]$;\\
		(O4) $O$ is increasing;\\
		(O5) $O$ is continuous.
	\end{flushleft}
	then $O$ is called an overlap function.

Based on above definition, it is easy to find an overlap function $O$: $[0,1]^2\rightarrow [0,1]$ is associative if only if $O$ satisfies the exchange principle, i.e.,
	\begin{flushleft}

(O6)\label{O6} $\forall x,y,u\in [0,1]: O(x,O(y,u))=O(y,O(x,u)).$
	\end{flushleft}

\end{definition}

\begin{definition}(\cite{dimuro2015residual})
	An overlap function $O$: $[0, 1]^{2}\rightarrow[0, 1]$ satisfied the Property 1-section deflation if:
	\begin{flushleft}
		(O7)\label{O7} $O(x,1)\leq x$ for any $x\in [0,1]$;
	\end{flushleft}
and the Property 1-section inflation if
\begin{flushleft}
	(O8) $O(x,1)\geq x$ for any $x\in [0,1]$.
\end{flushleft}
\begin{remark}
	An overlap function $O$ satisfies (O7) and (O8) if and only if $O$ has 1 as neutral element. However, there exist overlap functions satisfying (O7)((O8)) that do not have 1 as neutral element. This is the case of the overlap function $O_{mp}(x,y)=\min\{x^{p},y^{p}\}$, with $p>0$ and $p\neq 1$. Whenever $p>1$, $O_{mp}$ satisfies (O7)(but not (O8)). On the other hand, when $p<1$, $O_{mp}$ satisfies (O8) and not (O7).
\end{remark}
\end{definition}

\begin{definition}(\cite{dimuro2015residual})
	A function $I:[0,1]^{2}\rightarrow [0,1]$ is a fuzzy implicator if, for each $x, y, z\in [0,1]$ it holds that:
	\begin{flushleft}
		(I1) First place antitonicity: if $x\leq y$, then $I(y,z)\leq I(x,z)$;\\
		(I2) Second place isotonicity: if $y\leq z$, then $I(x,y)\leq I(x,z)$;\\
		(I3) Boundary condition: $I(0,0)=1$; $I(1,1)=1$; $I(1,0)=0$.\\
	\end{flushleft}
\end{definition}

\begin{definition} (\cite{dimuro2015residual})
	Let $O:[0,1]^{2}\rightarrow [0,1] $ be an overlap function. Then $I_O:[0,1]^{2}\rightarrow [0,1]$ is a residual implication of $O$ (i.e., $R_O$-implicator), where
	$$\forall x,y \in [0,1], I_O(x,y)=\max\{z\in [0,1]:O(x,z)\leq y\}.$$
	\begin{remark}
		The function $I_O$ is a fuzzy implicator. $O$ and $I_O$ form an adjoint pair, that is, they satisfy the property $\forall x,y,u\in [0,1], O(x,u)\leq y \Longleftrightarrow I_O(x,y)\geq u.$
	\end{remark}
	\end{definition}
\begin{example}(\cite{dimuro2015residual})
	The following three overlap functions with different conditions do not satisfy (O6).
	\begin{enumerate}[(1)]
		\item The overlap function $O_2^V(x,y)$ that satisfies (O7):
		$$O_2^V(x,y)=\left\{\begin{matrix}
		\frac{1+(2x-1)^2(2y-1)^2}{2}, & \text{if}$\ $ x,y\in (0.5,1];\\
	\min\{x,y\}, & \text{otherwise}.
		\end{matrix}\right.$$
		And the $R_O$-implicator:
		$$I_{O_2^V}(x,y)=\left\{\begin{matrix}
			\min \left\{1, \frac{\sqrt{2y-1}}{2(2x-1)}+\frac{1}{2}\right\}, & \text{if} $\ $ x\in (0.5,1], y\in [0.5,1];\\
			y, & \text{if}$\ $ y\in [0,0.5), x>y; \\
			1, & \text{if}$\ $ x\in [0,0.5], x\leq y.
		\end{matrix}\right.$$
	\item The overlap function $O_{m\frac{1}{2}}(x,y)$ that satisfies (O8):
	$$O_{m\frac{1}{2}}=\min\left\{\sqrt{x},\sqrt{y}\right\}.$$
	And the $R_O$-implicator:
	$$I_{O_{m\frac{1}{2}}}=\left\{\begin{matrix}
		1, & \text{if}$\ $ \sqrt{x}\leq y;\\
		y^2, & \text{if}$\ $ \sqrt{x}>y.
	\end{matrix}\right.$$
\item The overlap function $O^V_{mM}$ that satisfies (O7) and (O8):
$$ O^V_{mM}(x,y)=\left\{\begin{matrix}
	\frac{1+\min\left\{2x-1,2y-1\right\}\max\left\{(2x-1)^2,(2y-1)^2\right\}}{2}, & \text{if}$\ $ x,y\in (0.5,1];\\
	\min\{x,y\}, & \text{otherwise}.
\end{matrix}\right.$$
And the $R_O$-implicator:
$$I_{O_{mM}^V}(x,y)=\left\{\begin{matrix}
	\min \left\{1, \min\left\{\frac{\sqrt{2y-1}}{2\sqrt{2x-1}},\frac{2y-1}{2(2x-1)^2}\right\}+\frac{1}{2}\right\}, & \text{if}$\ $ x\in (0.5,1], y\in [0.5,1];\\
	y, & \text{if}$\ $ y\in [0,0.5), x>y; \\
	1, &\text{if}$\ $ x\in [0,0.5], x\leq y.
\end{matrix}\right.$$
		 \end{enumerate}
\end{example}

\section{Fuzzy neighborhood operators with overlap functions based on fuzzy coverings}\label{Section 3}
  In this section, we give the definitions of four fuzzy neighborhood operators based on fuzzy covering by using overlap functions and their implicators. Considering similarities and differences between $t$-norms and overlap functions, and we discuss some properties of these fuzzy neighborhood operators. And in the final part of this section, five new fuzzy coverings are derived from the original fuzzy covering $\mathscr{C}$.

  \subsection{Fuzzy neighborhood operators based on overlap functions and some properties of them}
  Now, on the basis of the definitions of fuzzy neighborhood operators based on $t$-norm proposed by D'eer et al.\cite{d2017fuzzy}, we give the definitions of four neighborhood operators $N_{1}^{\mathscr{C}}$, $N_{2}^{\mathscr{C}}$, $N_{3}^{\mathscr{C}}$, $N_{4}^{\mathscr{C}}$ with overlap function based on fuzzy covering $\mathscr{C}$.

 Considering the fuzzy neighborhood operator $\mathbb{N}_{1}^{\mathscr{C}}$ (see \cite{d2017fuzzy}), for which the fuzzy neighborhood $\mathbb{N}_{1}^{\mathscr{C}}(x)$ is defined by $$\mathbb{N}_1^{\mathscr{C}}(x):\mathbb{U}\rightarrow [0,1]:y\mapsto \underset{K\in \mathscr{C}}{\inf} \mathscr{I}(K(x),K(y)).$$ Then we can consider a natural extension of this definition by replacing the implicator $\mathscr{I}$ with $I_O$, which is based on an overlap function $O$.
 \begin{definition}
 	Let $\mathscr{C}$ be a fuzzy covering on $\mathbb{U}$ and $I_O$ an $R_O$-implicator. Then $N_{1}^{\mathscr{C}}: \mathbb{U}\rightarrow \mathscr{F}(\mathbb{U}):x \mapsto N_{1}^{\mathscr{C}}$ is an overlap function- based fuzzy neighborhood operator, for which the overlap function based fuzzy neighborhood $N_{1}^{\mathscr{C}}(x)$ is defined by
 	$$N_1^{\mathscr{C}}(x):\mathbb{U}\rightarrow [0,1]:y\mapsto \underset{K\in \mathscr{C}}{\inf}I_{O}(K(x),K(y)).$$

 	\end{definition}
 	If the overlap function, which defines the overlap function-based fuzzy neighborhood operator $N^\mathscr{C}_1$, satisfies (O6), the new operator $N^\mathbf{C}_1$ can be seen a kind of $\mathbb{N}_{1}^{\mathscr{C}}$.
\begin{proposition}\label{p:3.1}
	Let $\mathscr{C}$ be a finite fuzzy covering on $\mathbb{U}$ and $I_O$ be an $R_O$-implicator. Then $\forall x,y \in \mathbb{U}$, it holds that$$\underset{K\in \mathscr{C}}{\inf} I_O(K(x),K(y))=\underset{K\in \mathbb{C}(\mathscr{C},x)}{\inf} I_O(K(x),K(y))=\underset{K\in md(\mathscr{C},x)}{\inf} I_O(K(x),K(y)).$$
\end{proposition}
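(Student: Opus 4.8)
The plan is to prove the two equalities in the chain separately, working from left to right. For the first one I would pass from $\mathscr{C}$ to the fuzzy neighborhood system $\mathbb{C}(\mathscr{C},x)$ by showing that every fuzzy set discarded in this step contributes the value $1$ to the infimum and therefore cannot affect it. For the second one I would pass from $\mathbb{C}(\mathscr{C},x)$ to the minimal description $md(\mathscr{C},x)$ by combining the inclusion $md(\mathscr{C},x)\subseteq\mathbb{C}(\mathscr{C},x)$ with Lemma \ref{2.1} and the monotonicity axiom (I2) of the implicator $I_O$.

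First I would fix $x,y\in\mathbb{U}$ and take any $K\in\mathscr{C}\setminus\mathbb{C}(\mathscr{C},x)$, so that $K(x)=0$. Axiom (O2) gives $O(0,z)=0\le K(y)$ for every $z\in[0,1]$, whence $I_O(0,K(y))=\max\{z\in[0,1]:O(0,z)\le K(y)\}=1$. Thus each term of the infimum indexed by such a $K$ equals $1$, while the infimum over the nonempty set $\mathbb{C}(\mathscr{C},x)$ is at most $1$; writing $\mathscr{C}=\mathbb{C}(\mathscr{C},x)\cup(\mathscr{C}\setminus\mathbb{C}(\mathscr{C},x))$ and noting that adjoining terms equal to $1$ cannot lower an infimum bounded above by $1$ then gives the first equality.

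For the second equality the inclusion $md(\mathscr{C},x)\subseteq\mathbb{C}(\mathscr{C},x)$ immediately yields $\inf_{K\in\mathbb{C}(\mathscr{C},x)}I_O(K(x),K(y))\le\inf_{K\in md(\mathscr{C},x)}I_O(K(x),K(y))$. For the reverse inequality I would fix an arbitrary $K\in\mathbb{C}(\mathscr{C},x)$ and, since $\mathscr{C}$ is finite, invoke Lemma \ref{2.1} to obtain $K_1\in md(\mathscr{C},x)$ with $K_1(x)=K(x)$ and $K_1\subseteq K$. The equation $K_1(x)=K(x)$ lets me replace the first argument, while $K_1\subseteq K$ gives $K_1(y)\le K(y)$, so (I2) yields $I_O(K_1(x),K_1(y))=I_O(K(x),K_1(y))\le I_O(K(x),K(y))$. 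Hence $\inf_{K_1\in md(\mathscr{C},x)}I_O(K_1(x),K_1(y))\le I_O(K(x),K(y))$ for every $K\in\mathbb{C}(\mathscr{C},x)$, and taking the infimum over these $K$ supplies the remaining inequality.

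The hard part will be the evaluation $I_O(0,K(y))=1$ in the first step, since this is where the specific definition of the residual implicator and axiom (O2) genuinely enter and one must check that the maximum is attained exactly at $1$. The remainder is bookkeeping: the inclusion $md(\mathscr{C},x)\subseteq\mathbb{C}(\mathscr{C},x)$, a single application of monotonicity (I2), and one appeal to Lemma \ref{2.1}, whose finiteness hypothesis on $\mathscr{C}$ is precisely what guarantees that the minimal element $K_1$ exists.
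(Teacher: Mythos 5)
Your proposal is correct and takes essentially the same route as the paper's proof: the first equality comes from $I_O(K(x),K(y))=1$ whenever $K(x)=0$ (which you justify more explicitly than the paper by computing the residuum via (O2)), and the second comes from Lemma \ref{2.1} combined with second-place isotonicity (I2). The only difference is organizational—the paper splits $\inf_{K\in \mathbb{C}(\mathscr{C},x)}$ into a minimum of the infima over $md(\mathscr{C},x)$ and its complement, whereas you bound each term of the larger infimum directly—but the underlying argument is identical.
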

\begin{proof}
First note that if $K(x)=0$, because of the boundary condition of $R_O$-implicator, we have $I_O(K(x),K(y))=1$, hence $$\underset{K\in \mathscr{C}}{\inf}I_O(K(x),K(y))=\underset{K\in \mathbb{C}(\mathscr{C},x)}{\inf}I_O(K(x),K(y)).$$ Since $md(\mathscr{C},x)\subseteq \mathbb{C}(\mathscr{C},x)$, we have that $$\underset{K\in \mathbb{C}(\mathscr{C},x)}{\inf}I_O(K(x),K(y))=\min\left(\underset{K\in md (\mathscr{C},x)}{\inf}I_O(K((x),K(y)), \underset{K\in \mathbb{C}(\mathscr{C},x)\setminus md(\mathscr{C},x)}{\inf}I_O(K(x),K(y)) \right).$$ If $K\in \mathbb{C}(\mathscr{C},x)\setminus md(\mathscr{C},x)$, then there exists a $K^{'}\in md(\mathscr{C},x)$ such that $K^{'}\subseteq K$ and $K^{'}(x)=K(x)$. Therefore, $\forall y \in U$, $$I_O(K(x),K(y))=I_O(K^{'}(x),K(y))\geq I_O(K^{'}(x),K^{'}(y)). $$ Hence, we can conclude that $$\underset{K\in md(\mathscr{C},x)}{\inf}I_O(K(x),K(y))\leq \underset{K\in \mathbb{C}(\mathscr{C},x)\setminus md(\mathscr{C},x)}{\inf}I_O(K(x),K(y))$$and thus, $$ \underset{K\in \mathscr{C}}{\inf}I_O(K(x),K(y))=\underset{K\in md(\mathscr{C},x)}{\inf}I_O(K(x),K(y)).$$

\end{proof}
In order to apply Lemma \ref{2.1}, the fuzzy covering $\mathscr{C}$ is assumed to be finite.

The definition of fuzzy neighborhood operator $\mathbb{N}_{2}^{\mathscr{C}}$ is given by \cite{d2017fuzzy}, and the fuzzy neighborhood $\mathbb{N}_2^{\mathscr{C}}(x)$ is defined by $$\mathbb{N}_2^{\mathscr{C}}(x):\mathbb{U}\rightarrow [0,1]:y\mapsto \underset{K\in md(\mathscr{C},x)}{\sup} \mathscr{T}(K(x),K(y)).$$ Then we can consider a natural extension of this definition by replacing the t-norm $\mathscr{T}$ with an overlap function $O$.
\begin{definition}
	Let $\mathscr{C}$ be a fuzzy covering on $\mathbb{U}$ and $O$ an overlap function. Then $N_2^{\mathscr{C}}:\mathbb{U}\rightarrow \mathscr{F}(\mathbb{U}): x\mapsto N_2^{\mathscr{C}}$ is an overlap function based fuzzy neighborhood operator, for which the overlap function-based fuzzy neighborhood $N_2^{\mathscr{C}}(x)$ is defined by $$N_2^{\mathscr{C}}(x):\mathbb{U}\rightarrow [0,1]:y\mapsto \underset{K\in md(\mathscr{C},x)}{\sup}O(K(x),K(y)).$$
\end{definition}
Note that if $O$, which is used to construct the overlap function-based fuzzy neighborhood operator $N_2^{\mathscr{C}}$, satisfies the exchange principle (O6). The new operator can also be seen a sort of $\mathbb{N}_2^{\mathscr{C}}$.

Considering the fuzzy neighborhood operator $\mathbb{N}_3^{\mathscr{C}}$ (see \cite{d2017fuzzy}), which is defined by an implicator $\mathscr{I}$, and the fuzzy neighborhood $\mathbb{N}_3^{\mathscr{C}}(x)$ is defined as follow:$$\mathbb{N}_3^{\mathscr{C}}(x):\mathbb{U}\rightarrow [0,1]:y\mapsto \underset{K\in MD(\mathscr{C},x)}{\inf}\mathscr{I}(K(x),K(y)).$$ A natural extension of this definition by replacing the implicator $\mathscr{I}$ with $I_O$ based on an overlap function.
\begin{definition}
		Let $\mathscr{C}$ be a fuzzy covering on $\mathbb{U}$ and $I_{O}$ an $R_O$-implicator. Then $N_{3}^{\mathscr{C}}: \mathbb{U}\rightarrow \mathscr{F}(\mathbb{U}):x \mapsto N_{3}^{\mathscr{C}}$ is an overlap function- based fuzzy neighborhood operator, for which the overlap function based fuzzy neighborhood $N_{3}^{\mathscr{C}}(x)$ is defined by
	$$N_3^{\mathscr{C}}(x):\mathbb{U}\rightarrow [0,1]:y\mapsto \underset{K\in MD(\mathscr{C},x)}{\inf}I_{O}(K(x),K(y)).$$
\end{definition}
Given an $R_O$-implicator $I_O$, which is derived by an associative overlap function $O$. It can be use to define an overlap function-based neighborhood operator $N_3^{\mathscr{C}}$, which is also a kind of $\mathbb{N}_3^{\mathscr{C}}$.

Finally, we consider how to extend the fuzzy neighborhood operator $\mathbb{N}_4^\mathscr{C}$ (see \cite{d2017fuzzy}) to the overlap function based fuzzy neighborhood operator. The fuzzy neighborhood $\mathbb{N}_4^{\mathscr{C}}(x)$ is defined by $$\mathbb{N}_4^{\mathscr{C}}(x):\mathbb{U}\rightarrow [0,1]: y\mapsto \underset{K\in \mathscr{C}}{\sup}\mathscr{T}(K(x),K(y)).$$  A natural extension of this definition by replacing the t-norm $\mathscr{T}$ with the overlap function $O$.
\begin{definition}
	Let $\mathscr{C}$ be a fuzzy covering on $\mathbb{U}$ and $O$ an overlap function. Then $N_4^{\mathscr{C}}:\mathbb{U}\rightarrow \mathscr{F}(\mathbb{U}): x\mapsto N_4^{\mathscr{C}}$ is an overlap function based fuzzy neighborhood operator, for which the overlap function-based fuzzy neighborhood $N_4^{\mathscr{C}}(x)$ is defined by $$N_4^{\mathscr{C}}(x):\mathbb{U}\rightarrow [0,1]:y\mapsto \underset{K\in \mathscr{C}}{\sup}O(K(x),K(y)).$$
\end{definition}
 It is easy to see when the overlap function $O$ satisfies (O6), and the overlap function-based fuzzy neighborhood operator $N_4^\mathscr{C}$ is a kind of $\mathbb{N}_4^\mathscr{C}$.
 \begin{proposition}\label{p:3.2}
 	Let $\mathscr{C}$ be a finite fuzzy covering on $\mathbb{U}$ and $O$ be an overlap function. Then $\forall x,y\in \mathbb{U}$, it holds that $$\underset{K\in \mathscr{C}}{\sup}O(K(x),K(y))=\underset{K\in \mathbb{C}(\mathscr{C},x)}{\sup}O(K(x),K(y))=\underset{K\in MD(\mathscr{C},x)}{\sup}O(K(x),K(y)).$$
 \end{proposition}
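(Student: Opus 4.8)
The plan is to mirror the proof of Proposition \ref{p:3.1}, dualizing every step: wherever that argument exploited the boundary behaviour of the $R_O$-implicator and the minimal description $md(\mathscr{C},x)$, here I will exploit the annihilation property (O2) of the overlap function and the maximal description $MD(\mathscr{C},x)$, replacing each infimum with a supremum throughout. Since $\mathscr{C}$ is assumed finite, Lemma \ref{2.1} is available, so the maximal description elements are guaranteed to exist.

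First I would establish the left-hand equality $\sup_{K\in\mathscr{C}}O(K(x),K(y))=\sup_{K\in\mathbb{C}(\mathscr{C},x)}O(K(x),K(y))$. The key observation is that for any $K\in\mathscr{C}\setminus\mathbb{C}(\mathscr{C},x)$ we have $K(x)=0$, so by (O2) $O(K(x),K(y))=O(0,K(y))=0$. Because $\mathbb{C}(\mathscr{C},x)\neq\emptyset$ and every value $O(K(x),K(y))$ lies in $[0,1]$, hence is $\geq 0$, these vanishing terms cannot raise the supremum; splitting $\mathscr{C}$ into $\mathbb{C}(\mathscr{C},x)$ and its complement and taking the maximum of the two partial suprema yields the claim. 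This is the exact dual of the step in Proposition \ref{p:3.1} where the terms with $K(x)=0$ took the value $1$ and so were irrelevant to the infimum.

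Next I would prove the right-hand equality using $MD(\mathscr{C},x)\subseteq\mathbb{C}(\mathscr{C},x)$. Writing
$$\sup_{K\in\mathbb{C}(\mathscr{C},x)}O(K(x),K(y))=\max\left(\sup_{K\in MD(\mathscr{C},x)}O(K(x),K(y)),\ \sup_{K\in\mathbb{C}(\mathscr{C},x)\setminus MD(\mathscr{C},x)}O(K(x),K(y))\right),$$
it suffices to show the second partial supremum is dominated by the first. For $K\in\mathbb{C}(\mathscr{C},x)\setminus MD(\mathscr{C},x)$, Lemma \ref{2.1} supplies a $K_2\in MD(\mathscr{C},x)$ with $K_2(x)=K(x)$ and $K\subseteq K_2$; monotonicity (O4) of $O$ together with $K(y)\leq K_2(y)$ then gives
$$O(K(x),K(y))=O(K_2(x),K(y))\leq O(K_2(x),K_2(y)).$$
Taking suprema shows $\sup_{K\in\mathbb{C}(\mathscr{C},x)\setminus MD(\mathscr{C},x)}O\leq\sup_{K\in MD(\mathscr{C},x)}O$, so the whole supremum collapses onto $MD(\mathscr{C},x)$.

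The main point requiring care is the direction in which Lemma \ref{2.1} is applied: unlike the $md$ case, the maximal description furnishes an \emph{enlarging} witness $K_2\supseteq K$ with the same value at $x$, and it is precisely this inclusion, combined with the fact that $O$ is increasing in its second argument, that pushes the value of $O$ \emph{up} rather than down — the reversal needed because we now work with a supremum rather than an infimum. I would also double-check that (O2), rather than the $R_O$-boundary condition, is the correct tool for discarding the $K(x)=0$ terms, and that finiteness of $\mathscr{C}$ is genuinely invoked so that Lemma \ref{2.1} and the existence of maximal description elements are legitimate.
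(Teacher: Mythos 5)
Your proof is correct and is essentially the paper's own argument: the paper proves Proposition \ref{p:3.2} simply by declaring it ``analogous'' to Proposition \ref{p:3.1}, and your dualization is exactly that analogous argument written out in full. In particular, you correctly identify the two substitutions the paper leaves implicit — property (O2) giving $O(0,K(y))=0$ to discard the $K(x)=0$ terms (in place of the implicator boundary value $1$), and Lemma \ref{2.1} supplying an enlarging witness $K_2\in MD(\mathscr{C},x)$ with $K_2(x)=K(x)$ and $K\subseteq K_2$ so that monotonicity (O4) collapses the supremum onto the maximal description.
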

\begin{proof}
	Analogously as the proof of Proposition \ref{p:3.1}, we can prove that $$\underset{K\in \mathscr{C}}{\sup}O(K(x),K(y))=\underset{K\in \mathbb{C}(\mathscr{C},x)}{\sup}O(K(x),K(y))$$ and $$\underset{K\in \mathscr{C}}{\sup}O(K(x),K(y))=\underset{K\in MD(\mathscr{C},x)}{\sup}O(K(x),K(y))\geq \underset{K\in \mathbb{C}(\mathscr{C},x)\setminus MD(\mathscr{C},x)}{\sup}O(K(x),K(y)).$$
\end{proof}

In \cite{d2017fuzzy}, when $\mathscr{C}$ is a fuzzy covering on $\mathbb{U}$, $\mathscr{T}$ is a $t$-norm and $\mathscr{I}$ is the residual implication (i.e. $R$-implicator) of $\mathscr{T}$, the reflexivity properties of the fuzzy operators $\mathbb{N}^{\mathscr{C}}_1$, $\mathbb{N}^{\mathscr{C}}_2$, $\mathbb{N}^{\mathscr{C}}_4$ and $\mathbb{N}^{\mathscr{C}}_4$ are all hold in different cases. Besides these, $\mathbb{N}^{\mathscr{C}}_1$ and $\mathbb{N}^{\mathscr{C}}_3$ have transitivity property and $\mathbb{N}^{\mathscr{C}}_4$ has transitivity property. In this section, these three kinds of properties of overlap function-based neighborhood operators are discussed as well.

First of all, we consider the reflexivity property of the new fuzzy operators, and we give a proposition as below.
 \begin{proposition}
 	Let $\mathscr{C}$ be a fuzzy covering, $O$ an overlap function and $I_O$ an $R_O$-implicator based on an overlap function which satisfies (O7), the following properties are hold.
\begin{enumerate}
	\item[(1)]
	$N_1^{\mathscr{C}}$ and $N_3^{\mathscr{C}}$ are reflexive;
	\item [(2)]
	$N_4^{\mathscr{C}}$ is reflexive;
	\item[(3)]
	If $\mathscr{C}$ is finite, $N_2^{\mathscr{C}}$ is reflexive.
\end{enumerate}
 	
 \end{proposition}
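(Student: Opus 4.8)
The plan is to reduce all four reflexivity claims to two elementary facts about the underlying fuzzy logical operators: that (O7) forces $I_O(a,a)=1$ for every $a\in[0,1]$, and that (O3) forces $O(1,1)=1$. First I would isolate the implicator identity as the key lemma. By the residual definition $I_O(a,a)=\max\{z\in[0,1]:O(a,z)\le a\}$, and since (O7) gives $O(a,1)\le a$, the value $z=1$ already belongs to the admissible set, so the maximum is attained at $1$; hence $I_O(a,a)=1$ for all $a$. This single observation disposes of part (1): for every $K$ we have $I_O(K(x),K(x))=1$, so both $N_1^{\mathscr{C}}(x)(x)=\inf_{K\in\mathscr{C}}I_O(K(x),K(x))$ and $N_3^{\mathscr{C}}(x)(x)=\inf_{K\in MD(\mathscr{C},x)}I_O(K(x),K(x))$ are infima of families all of whose members equal $1$, and therefore equal $1$.

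For part (2) I would appeal directly to the structure of a fuzzy covering. By Definition \ref{D1} there exists a distinguished $K_0\in\mathscr{C}$ with $K_0(z)=1$ for every $z\in\mathbb{U}$; in particular $K_0(x)=1$. Then (O3) yields $O(K_0(x),K_0(x))=O(1,1)=1$, so the supremum defining $N_4^{\mathscr{C}}(x)(x)=\sup_{K\in\mathscr{C}}O(K(x),K(x))$ is bounded below by $1$ and thus equals $1$. Note that this argument uses only (O3) and the covering axiom, not (O7).

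Part (3) is the delicate one and the place where the finiteness hypothesis enters. The supremum defining $N_2^{\mathscr{C}}$ ranges only over $md(\mathscr{C},x)$, and there is no a priori reason for the full set $K_0$ above to be a minimal-description element; so I cannot simply reuse the $N_4^{\mathscr{C}}$ argument. Instead I would observe that $K_0\in\mathbb{C}(\mathscr{C},x)$ because $K_0(x)=1>0$, and then invoke Lemma \ref{2.1} (valid here since $\mathscr{C}$ is finite) to produce some $K_1\in md(\mathscr{C},x)$ with $K_1(x)=K_0(x)=1$. For this $K_1$ we again get $O(K_1(x),K_1(x))=O(1,1)=1$ by (O3), so $N_2^{\mathscr{C}}(x)(x)=\sup_{K\in md(\mathscr{C},x)}O(K(x),K(x))=1$.

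I expect the only genuine subtlety to be this last step: everything else is a direct evaluation, but establishing reflexivity of $N_2^{\mathscr{C}}$ requires descending from the full set to a minimal-description element of the same membership value at $x$, which is exactly the content of Lemma \ref{2.1} and the reason finiteness is imposed in (3).
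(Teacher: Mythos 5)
Your proposal is correct and follows essentially the same route as the paper's proof: part (1) via the observation that (O7) makes $z=1$ admissible in the residuum so that $I_O(a,a)=1$, part (2) via the covering axiom supplying a set with membership $1$ at $x$ together with the boundary property $O(1,1)=1$, and part (3) via Lemma \ref{2.1} (under finiteness) to descend to a minimal-description element $K_1$ with $K_1(x)=1$. The only cosmetic difference is that the paper proves the slightly more general fact that $x\leq y$ implies $I_O(x,y)=1$ before specializing to $I_O(a,a)=1$, whereas you establish the diagonal identity directly.
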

\begin{proof}
	 If $x\leq y$, since (O7), we have $O(x,1)\leq x \leq y$, then $I_O(x,y)=max\{z\in [0,1]|O(x,z)\leq y\}=1$. For all $a\in [0,1]$, $I(a,a)=1$, then we can get $$N_1^{\mathscr{C}}(a)(a)=\underset{K\in \mathscr{C}}{\inf}I_O(K(a),K(a))=1,$$ and also get $N_3^{\mathscr{C}}(a)(a)=1$. $N_1^{\mathscr{C}}$ and $N_3^{\mathscr{C}}$ are reflexive. Moreover, let $x\in U$. Then $\exists K\in \mathscr{C}$ such that $K(x)=1$. Hence, $$N_4^{\mathscr{C}}(x)(x)\geq O(K(x),K(x))=1.$$
	
	 If $\mathscr{C}$ is finite, because of Lemma \ref{2.1}, there exists $K_1\in md(\mathscr{C},x)$ with $K_1(x)=K(x)=1$ and $K_1\subseteq K$. So that, $$N_2^{\mathscr{C}}(x)(x)\geq O(K_1(x),K_1(x))=1,$$ hence, $N_2^{\mathscr{C}}$ is reflexive.
\end{proof}
Now, we give an example to interpret that the property (O7) is necessary for the dual overlap function of $I_O$.
\begin{example}
	Considering the overlap function $O_{m\frac{1}{2}}$, and the function satisfies the property (O8), and the implicator $I_{O_{m\frac{1}{2}}}$ is used to define operators $N_1^{\mathscr{C}}$ and $N_3^{\mathscr{C}}$. Let $\mathbb{U}$ be a set with $\mathbb{U}=\{x,y\}$ and $\mathscr{C}$ a fuzzy covering with $\mathscr{C}=\{K_1,K_2\} $. And for covering $\mathscr{C}$ we have $K_1=\frac{1}{x}+\frac{0.81}{y}$ and $K_2=\frac{0.64}{x}+\frac{1}{y}$. Then we get that$$N_1^{\mathscr{C}}(x)(x)=N_3^{\mathscr{C}}(x)(x)=0.4096\neq 1.$$Hence, the operators $N_1^{\mathscr{C}}$ and $N_3^{\mathscr{C}}$ are not reflexive in this condition.
\end{example}
Secondly, we consider about transitivity property. If $\mathscr{T}$ is a left-continuous t-norm and $\mathscr{I}$ is its R-implicator, for each $x,y,z\in \mathbb{U}$ it holds that $$\mathscr{T}(\mathbb{N}_1^{\mathscr{C}}(y)(x),\mathbb{N}_1^{\mathscr{C}}(x)(z))\leq \mathbb{N}_1^{\mathscr{C}}(y)(z),$$ $$\mathscr{T}(\mathbb{N}_3^{\mathscr{C}}(y)(x),\mathbb{N}_3^{\mathscr{C}}(x)(z))\leq \mathbb{N}_3^{\mathscr{C}}(y)(z),$$ i.e., $\mathbb{N}_1^{\mathscr{C}}$ and $\mathbb{N}_3^{\mathscr{C}}$ are $\mathscr{T}$-transitive fuzzy neighborhood operators (see \cite{d2017fuzzy}). Analogously, we can give the definition of $O$-transitive.
\begin{definition}
	Let $N$ be a fuzzy neighborhood operator, $x,y,z \in \mathbb{U}$, and $O$ an overlap function.
	Then $N$ is $O$-transitive if and only if the operator satisfies $O(N(x)(y),N(y)(z))\leq N(x)(z)$.
\end{definition}
 For the overlap function-based fuzzy neighborhood operators, if the overlap function is associative which can be seen as a t-norm, then we have the operators $N_1^{\mathscr{C}}$ and $N_3^{\mathscr{C}}$ which are defined by its $R_O$-implicator are $O$-transitive. If not, these two operators do not have this property.

  Considering the overlap function $O_{m\frac{1}{2}}$ and its $R_O$-implicatorr $I_{O_{m\frac{1}{2}}}$, we can give the following example:
\begin{example}\label{ex3.2}
	Let $\mathbb{U}$ be a set with $\mathbb{U}=\{x,y,z\}$ and $\mathscr{C}$ a fuzzy covering with $\mathscr{C}=\{K_1, K_2\}$. And for covering $\mathscr{C}$ we have $K_1=\frac{1}{x}+\frac{1}{y}+\frac{0.8}{z}$ and $K_2=\frac{0.9}{x}+\frac{0.9}{y}+\frac{1}{z}$. The $R_O$-implicator $I_{O_{m\frac{1}{2}}}$ is used to define the operator $N_1^{\mathscr{C}}$ and $N_3^{\mathscr{C}}$, and then we have that$$N_1^{\mathscr{C}}(y)(x)=N_3^{\mathscr{C}}(y)(x)=0.81,$$ $$N_1^{\mathscr{C}}(x)(z)=N_3^{\mathscr{C}}(x)(z)=0.81,$$ $$N_1^{\mathscr{C}}(y)(z)=N_3^{\mathscr{C}}(y)(z)=0.64.$$
	Therefore, we have that $O(0.81,0.81)=0.9>0.64$, hence the overlap function based fuzzy operators $N_1^{\mathscr{C}}$ and $N_3^{\mathscr{C}}$ are not $O_{m\frac{1}{2}}$-transitive.
\end{example}
Finally, we will discuss about symmetric property. In \cite{d2017fuzzy}, D'eer et al. proved the fuzzy neighborhood operator $\mathbb{N}_4^{\mathscr{C}}$ is symmetric. Analogously, we can also prove overlap function-based fuzzy neighborhood operator $N_4^{\mathscr{C}}$ satisfies symmetric property.
\begin{proposition}
	Let $\mathscr{C}$ be a fuzzy covering on $\mathbb{U}$ and $O$ an overlap function. Then for all $x,y\in \mathbb{U}$ it holds that $N_{4}^{\mathscr{C}}(x)(y)=N_4^{\mathscr{C}}(y)(x).$
	
\end{proposition}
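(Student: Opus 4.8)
The plan is to prove the symmetry directly from the definition of $N_4^{\mathscr{C}}$, reducing the whole statement to the commutativity axiom (O1) of the overlap function. First I would unfold both sides: by definition,
\[
N_4^{\mathscr{C}}(x)(y)=\underset{K\in \mathscr{C}}{\sup}\,O(K(x),K(y)),\qquad
N_4^{\mathscr{C}}(y)(x)=\underset{K\in \mathscr{C}}{\sup}\,O(K(y),K(x)).
\]
The crucial structural observation is that both suprema range over the \emph{same} index set, namely the entire covering $\mathscr{C}$, with no dependence on a distinguished point. This is precisely what distinguishes $N_4^{\mathscr{C}}$ from $N_2^{\mathscr{C}}$, whose supremum is taken over $md(\mathscr{C},x)$, a set that varies with $x$ and therefore breaks the symmetry between the two arguments.

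Next I would fix an arbitrary $K\in \mathscr{C}$ and apply commutativity (O1) of $O$, which yields $O(K(x),K(y))=O(K(y),K(x))$. Consequently the two families of real numbers $\{O(K(x),K(y))\}_{K\in \mathscr{C}}$ and $\{O(K(y),K(x))\}_{K\in \mathscr{C}}$ coincide term by term. Since they are indexed by the same set and are equal entrywise, their suprema are equal, i.e. $N_4^{\mathscr{C}}(x)(y)=N_4^{\mathscr{C}}(y)(x)$, as required.

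I do not anticipate a genuine obstacle here: unlike the $t$-norm case treated by D'eer et al., no associativity, adjointness, or residuation is needed, and the implicator-based operators $N_1^{\mathscr{C}},N_3^{\mathscr{C}}$ (which rely on the inherently non-commutative $I_O$) are not expected to be symmetric at all. The only property of $O$ that the argument consumes is commutativity, and the only property of the covering that matters is that the aggregation is global over $\mathscr{C}$. The proof is therefore essentially a one-line termwise comparison of suprema, and the main thing to be careful about is simply to record that the index set is identical on both sides so that the entrywise equality of the aggregands transfers to the suprema.
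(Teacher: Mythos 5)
Your proof is correct and is essentially the paper's own argument: the paper likewise derives the symmetry of $N_4^{\mathscr{C}}$ immediately from commutativity (O1) of the overlap function, with your version merely making explicit the (correct) observation that both suprema range over the same index set $\mathscr{C}$. Your aside about $N_2^{\mathscr{C}}$ failing to be symmetric because $md(\mathscr{C},x)$ depends on the reference point also matches the remark the paper makes right after this proposition.
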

\begin{proof}
	This follows immediately from the fact that an overlap function satisfies (O1), i.e. overlap function is commutative.
	\end{proof}
Note that, although the overlap function-based fuzzy neighborhood operator $N_2^{\mathscr{C}}$ is also constructed with an overlap function, the operator does not satisfies symmetric property. Because the minimal description of ``$\forall x,y\in U$" are not necessarily equal.
\subsection{Fuzzy covering derived from a fuzzy covering}
Given a fuzzy covering $\mathscr{C}$, we can give the definitions of the derived coverings $\mathscr{C}_1$, $\mathscr{C}_2$, $\mathscr{C}_3$, $\mathscr{C}_4$, $\mathscr{C}_{\cap}$ and $\mathscr{C}_{\cup}$.
\begin{definition}\cite{d2017fuzzy}
	Let $\mathscr{C} $ be a fuzzy covering on $\mathbb{U}$, $O$ an overlap function and $I_O$ an $R_O$-implicator. Then define the following collections of fuzzy set:
		\begin{enumerate}
		\item[(1)]
		$\mathscr{C}_{1}=\bigcup\{md(\mathscr{C},x):x\in \mathbb{U}\},$
		\item[(2)]
		$\mathscr{C}_{2}=\bigcup\{MD(\mathscr{C},x):x\in \mathbb{U}\},$
		
		\item[(3)]
		$\mathscr{C}_{\cap}=\mathscr{C}\setminus \{K\in \mathscr{C}: (\exists \mathscr{C}^{'}\subseteq \mathscr{C} \setminus \{K\})(K=\bigcap \mathscr{C}^{'})\},$
		\item[(4)]
		$\mathscr{C}_{\cup}=\mathscr{C}\setminus \{K\in \mathscr{C}: (\exists \mathscr{C}^{'}\subseteq \mathscr{C} \setminus \{K\})(K=\bigcup \mathscr{C}^{'})\}.$
	\end{enumerate}
\end{definition}
In \cite{d2017fuzzy}, the fuzzy covering $\mathscr{C}_{\cap}$ is a fuzzy subcovering of the fuzzy covering $\mathscr{C}$, if $\mathscr{C}$ is finite, the fuzzy covering $\mathscr{C}_1$, $\mathscr{C}_2$ and $\mathscr{C}_{\cup}$ are all subcoverings of it. Especially, when $\mathscr{C}$ is a finite fuzzy covering, we have $\mathscr{C}_{\cup}=\mathscr{C}_1$ and $\mathscr{C}_2$ is a fuzzy subcovering of $\mathscr{C}_{\cap}$.
\begin{proposition}\cite{d2017fuzzy}
	Let $\mathscr{C}$ be a finite fuzzy covering on $\mathbb{U}$. Then $\mathscr{C}_{1}$, $\mathscr{C}_{2}$ and $\mathscr{C}_{\cup}$ are all finite fuzzy subcoverings of $\mathscr{C}$.
\end{proposition}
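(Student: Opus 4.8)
The plan is to prove the statement in two stages: first that each of the three families is \emph{finite}, and then that each is a \emph{covering} in the sense of Definition \ref{D1}, i.e.\ that for every $x\in\mathbb{U}$ the family contains a fuzzy set $K$ with $K(x)=1$. Finiteness is immediate: by definition $\mathscr{C}_1=\bigcup_{x\in\mathbb{U}}md(\mathscr{C},x)$ and $\mathscr{C}_2=\bigcup_{x\in\mathbb{U}}MD(\mathscr{C},x)$, and since $md(\mathscr{C},x),MD(\mathscr{C},x)\subseteq\mathbb{C}(\mathscr{C},x)\subseteq\mathscr{C}$ we get $\mathscr{C}_1,\mathscr{C}_2\subseteq\mathscr{C}$; also $\mathscr{C}_{\cup}\subseteq\mathscr{C}$ directly by construction. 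As $\mathscr{C}$ is finite, all three are finite subfamilies of it, so it only remains to verify the covering property for each.

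For $\mathscr{C}_1$ and $\mathscr{C}_2$ I would invoke Lemma \ref{2.1}. Fix $x\in\mathbb{U}$; since $\mathscr{C}$ is a covering there is $K_0\in\mathscr{C}$ with $K_0(x)=1$, and then $K_0\in\mathbb{C}(\mathscr{C},x)$ because $K_0(x)>0$. Lemma \ref{2.1} yields some $K_1\in md(\mathscr{C},x)$ with $K_1(x)=K_0(x)=1$ (and $K_1\subseteq K_0$), so $K_1\in\mathscr{C}_1$ witnesses the covering property at $x$; the dual half of the same lemma gives $K_2\in MD(\mathscr{C},x)$ with $K_2(x)=K_0(x)=1$, so $K_2\in\mathscr{C}_2$ does the same for $\mathscr{C}_2$. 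Since $x$ was arbitrary, both $\mathscr{C}_1$ and $\mathscr{C}_2$ are coverings.

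The main obstacle is $\mathscr{C}_{\cup}$, where no minimal/maximal description machinery is available and one must argue that deleting the ``union-reducible'' members never destroys the cover. I would fix $x\in\mathbb{U}$ and consider the finite nonempty family $A_x=\{K\in\mathscr{C}:K(x)=1\}$, choosing a $\subseteq$-minimal element $K^{*}$ of $A_x$, which exists precisely because $A_x$ is finite. Suppose towards a contradiction that $K^{*}\notin\mathscr{C}_{\cup}$; then $K^{*}=\bigcup\mathscr{C}'$ for some $\mathscr{C}'\subseteq\mathscr{C}\setminus\{K^{*}\}$. Because $\mathscr{C}'$ is finite, the pointwise supremum defining $\bigcup\mathscr{C}'$ is attained, so from $(\bigcup\mathscr{C}')(x)=K^{*}(x)=1$ there is $S\in\mathscr{C}'$ with $S(x)=1$; then $S\in A_x$, $S\subseteq\bigcup\mathscr{C}'=K^{*}$, and $S\neq K^{*}$, i.e.\ $S\subsetneq K^{*}$, contradicting the minimality of $K^{*}$ in $A_x$. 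Hence $K^{*}\in\mathscr{C}_{\cup}$ and $K^{*}(x)=1$, proving $\mathscr{C}_{\cup}$ is a covering. The crucial use of finiteness lies exactly in guaranteeing both the existence of a minimal element of $A_x$ and the attainment of the supremum over $\mathscr{C}'$; I expect this finiteness dependence to be the only delicate point, with everything else being routine bookkeeping about fuzzy-set inclusion.
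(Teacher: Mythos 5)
Your proof is correct, but note that the paper contains no proof of this proposition to compare against: it is stated with a citation to D'eer et al.\ \cite{d2017fuzzy} and imported as a known result. Your reconstruction is a valid, self-contained argument and follows the route one would expect. Finiteness of $\mathscr{C}_1$, $\mathscr{C}_2$ and $\mathscr{C}_{\cup}$ is indeed immediate from their being subfamilies of the finite $\mathscr{C}$ (using $md(\mathscr{C},x),MD(\mathscr{C},x)\subseteq\mathbb{C}(\mathscr{C},x)\subseteq\mathscr{C}$). The covering property of $\mathscr{C}_1$ and $\mathscr{C}_2$ follows, as you say, from Lemma \ref{2.1} (which the paper explicitly notes holds automatically when $\mathscr{C}$ is finite) applied to a witness $K_0\in\mathscr{C}$ with $K_0(x)=1$. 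Your treatment of $\mathscr{C}_{\cup}$ — choosing a $\subseteq$-minimal element $K^{*}$ of $A_x=\{K\in\mathscr{C}:K(x)=1\}$ and showing it cannot equal $\bigcup\mathscr{C}'$ for $\mathscr{C}'\subseteq\mathscr{C}\setminus\{K^{*}\}$, because finiteness makes the supremum attained and would produce a strictly smaller member of $A_x$ — is exactly the right use of the finiteness hypothesis; the degenerate case $\mathscr{C}'=\emptyset$ is silently covered, since then $(\bigcup\mathscr{C}')(x)=0\neq 1=K^{*}(x)$. One alternative worth knowing: the paper subsequently cites the identity $\mathscr{C}_{\cup}=\mathscr{C}_1$ for finite $\mathscr{C}$, so the $\mathscr{C}_{\cup}$ case could also be deduced from the $\mathscr{C}_1$ case via that identity; your direct argument is preferable here precisely because it does not presuppose that separate result.
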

\begin{proposition}\cite{d2017fuzzy}
	Let $\mathscr{C}$ be a fuzzy covering on $\mathbb{U}$. Then $\mathscr{C}_{\cap}$ is a fuzzy subcovering of $\mathscr{C}$.
\end{proposition}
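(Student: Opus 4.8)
The plan is to verify the two requirements hidden in the phrase ``fuzzy subcovering'': first that $\mathscr{C}_{\cap}\subseteq\mathscr{C}$, and second that $\mathscr{C}_{\cap}$ is itself a fuzzy covering in the sense of Definition \ref{D1}. The containment $\mathscr{C}_{\cap}\subseteq\mathscr{C}$ is immediate from the definition, since $\mathscr{C}_{\cap}$ is obtained from $\mathscr{C}$ by deleting a subfamily. Hence the whole argument reduces to checking that the deletion never destroys the covering property, i.e. that $\mathscr{C}_{\cap}$ still contains a fuzzy set taking the value $1$ everywhere.

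First I would use the hypothesis that $\mathscr{C}$ is a fuzzy covering to fix an element $K^{*}\in\mathscr{C}$ with $K^{*}(x)=1$ for all $x\in\mathbb{U}$; note that $K^{*}$ is the unique constant-$1$ fuzzy set of $\mathscr{C}$, since any two fuzzy sets that are pointwise equal to $1$ coincide. The key step is then to show that $K^{*}$ is \emph{not} deleted, i.e. $K^{*}\in\mathscr{C}_{\cap}$. By the definition of $\mathscr{C}_{\cap}$ this amounts to showing that there is no (nonempty) subfamily $\mathscr{C}^{'}\subseteq\mathscr{C}\setminus\{K^{*}\}$ with $K^{*}=\bigcap\mathscr{C}^{'}$.

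I would argue this by contradiction. Suppose such a $\mathscr{C}^{'}$ exists. Since the intersection of fuzzy sets is the pointwise infimum, the equality $K^{*}=\bigcap\mathscr{C}^{'}$ reads $\inf_{K\in\mathscr{C}^{'}}K(x)=1$ for every $x\in\mathbb{U}$. Because all membership values lie in $[0,1]$, an infimum equal to $1$ forces every term to equal $1$; hence $K(x)=1$ for all $x\in\mathbb{U}$ and all $K\in\mathscr{C}^{'}$, so each such $K$ is the constant-$1$ set and therefore equals $K^{*}$. This contradicts $\mathscr{C}^{'}\subseteq\mathscr{C}\setminus\{K^{*}\}$, which forbids $K^{*}$ as a member; so no such $\mathscr{C}^{'}$ exists, and $K^{*}\in\mathscr{C}_{\cap}$.

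Finally, having $K^{*}\in\mathscr{C}_{\cap}$ with $K^{*}(x)=1$ for all $x$ shows, via Definition \ref{D1}, that $\mathscr{C}_{\cap}$ is a fuzzy covering of $\mathbb{U}$; combined with $\mathscr{C}_{\cap}\subseteq\mathscr{C}$ this yields that $\mathscr{C}_{\cap}$ is a fuzzy subcovering of $\mathscr{C}$. The only delicate point I anticipate is the treatment of the empty subfamily: one must read the clause defining $\mathscr{C}_{\cap}$ as quantifying over \emph{nonempty} $\mathscr{C}^{'}$, since the conventional empty intersection equals the constant-$1$ set and would otherwise spuriously delete $K^{*}$. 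Under that (standard) reading the argument is complete, and notably it requires no finiteness assumption on $\mathscr{C}$, consistent with the statement of the proposition.
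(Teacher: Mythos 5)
The paper never proves this proposition: it is imported verbatim from \cite{d2017fuzzy} as a cited result, so there is no in-paper argument to compare yours against; it can only be judged against the definitions this paper records. Against that standard your proof is correct and complete. Under Definition \ref{D1} as literally stated, a fuzzy covering contains a member $K^{*}$ with $K^{*}(x)=1$ for every $x\in\mathbb{U}$, and your two steps settle the claim: $\mathscr{C}_{\cap}\subseteq\mathscr{C}$ holds by construction, and $K^{*}$ cannot equal $\bigcap\mathscr{C}^{'}$ for any nonempty $\mathscr{C}^{'}\subseteq\mathscr{C}\setminus\{K^{*}\}$, since a pointwise infimum of values in $[0,1]$ equal to $1$ forces every member of $\mathscr{C}^{'}$ to be constantly $1$ and hence (fuzzy sets being functions, so equality is extensional) to coincide with $K^{*}$. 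Thus $K^{*}$ survives into $\mathscr{C}_{\cap}$, which is therefore again a fuzzy covering, and no finiteness is used --- consistent with the proposition, which, unlike its neighbours about $\mathscr{C}_{1}$, $\mathscr{C}_{2}$ and $\mathscr{C}_{\cup}$, carries no finiteness hypothesis. Your observation about the empty subfamily is also genuinely needed, not pedantry: under the convention $\bigcap\emptyset=1_{\mathbb{U}}$ the constant-$1$ member would be ``spuriously'' deleted and the statement would simply be false, so the quantifier in the definition of $\mathscr{C}_{\cap}$ must indeed range over nonempty $\mathscr{C}^{'}$.

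One caveat is worth recording, because it shows exactly what your argument leans on. The source \cite{d2017fuzzy} (and the sentence this paper places right after Definition \ref{D1}) works with the weaker covering condition that \emph{for each} $x$ there exists $K\in\mathscr{C}$ with $K(x)=1$, rather than a single constant-$1$ member. Your proof does not transfer to that setting, since $K^{*}$ may not exist, and the proposition can in fact fail there for infinite coverings: on $\mathbb{U}=\{x_{0},y\}$ take $K_{q}$ with $K_{q}(x_{0})=1$ and $K_{q}(y)=q$ for each rational $q\in[0,1)$, together with $L$ given by $L(x_{0})=0$, $L(y)=1$; then every $K_{q}$ equals the intersection of the strictly larger sets $K_{q'}$ with $q<q'<1$, so $\mathscr{C}_{\cap}=\{L\}$, which no longer covers $x_{0}$. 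Under that weaker reading one must add finiteness, or the ascending-chain condition of Lemma \ref{2.1} (via Zorn's lemma one then finds, above any $K$ with $K(x)=1$, a maximal member of $\mathscr{C}$, and maximal members are never intersections of other members). So the validity of your proof hinges on the strong reading of Definition \ref{D1} that this paper adopts; it is the right proof for the statement as written here, but not a proof of the original result of \cite{d2017fuzzy}.
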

\begin{proposition}\label{p:3.7}\cite{d2017fuzzy}
	Let $\mathscr{C}$ be a finite fuzzy covering on $\mathbb{U}$. Then $\mathscr{C}_{2}$ is a fuzzy subcovering of $\mathscr{C}_{\cap}$.
\end{proposition}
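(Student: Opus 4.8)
The plan is to reduce the claim to a single containment. The earlier proposition already guarantees that, for finite $\mathscr{C}$, the collection $\mathscr{C}_2$ is itself a finite fuzzy subcovering of $\mathscr{C}$; in particular $\mathscr{C}_2$ is a fuzzy covering and $\mathscr{C}_2\subseteq \mathscr{C}$. Since $\mathscr{C}_\cap$ is obtained from $\mathscr{C}$ merely by deleting those members that are expressible as an intersection of other members, proving that $\mathscr{C}_2$ is a fuzzy subcovering of $\mathscr{C}_\cap$ amounts to showing that none of the elements of $\mathscr{C}_2$ is deleted in that passage, i.e. $\mathscr{C}_2\subseteq \mathscr{C}_\cap$.

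First I would fix an arbitrary $K\in \mathscr{C}_2$. By the definition of $\mathscr{C}_2$ there is some $x\in \mathbb{U}$ with $K\in MD(\mathscr{C},x)$; in particular $K\in \mathbb{C}(\mathscr{C},x)$, so $K(x)>0$. Arguing by contradiction, suppose $K$ is one of the deleted elements, so that there exists $\mathscr{C}'\subseteq \mathscr{C}\setminus\{K\}$ with $K=\bigcap \mathscr{C}'$. Reading this equality pointwise at $x$ gives $K(x)=\inf_{S\in \mathscr{C}'}S(x)$.

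The decisive step uses finiteness: since $\mathscr{C}$, and hence $\mathscr{C}'$, is finite, this infimum is actually a minimum, attained at some $S_0\in \mathscr{C}'$, so $S_0(x)=K(x)>0$ and therefore $S_0\in \mathbb{C}(\mathscr{C},x)$. Moreover, as $K=\bigcap \mathscr{C}'$ is contained in every member of $\mathscr{C}'$, we have $S_0\supseteq K$. Now I invoke the maximality property defining $MD(\mathscr{C},x)$: applied to $S_0\in \mathbb{C}(\mathscr{C},x)$ with $S_0(x)=K(x)$ and $S_0\supseteq K$, it forces $S_0=K$. This contradicts $S_0\in \mathscr{C}\setminus\{K\}$, since distinct members of a collection of fuzzy sets are distinct as fuzzy sets. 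Hence $K$ cannot be deleted and survives into $\mathscr{C}_\cap$, and because $K$ was arbitrary we conclude $\mathscr{C}_2\subseteq \mathscr{C}_\cap$.

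I expect the only genuinely delicate point to be the attainment of the infimum as a minimum, which is precisely where the hypothesis that $\mathscr{C}$ is finite becomes indispensable: without it one could not guarantee an $S_0\in \mathscr{C}'$ with $S_0(x)=K(x)$, and the contradiction with the maximality of $K$ in $MD(\mathscr{C},x)$ would collapse. Everything else is routine bookkeeping with the definitions of $MD(\mathscr{C},x)$ and $\mathscr{C}_\cap$.
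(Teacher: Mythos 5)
Your argument is correct. Note that the paper itself offers no proof of this proposition: it is imported verbatim from D'eer et al.\ \cite{d2017fuzzy}, so there is nothing internal to compare against. Your proof is the natural one and it is sound: reduce the claim to the containment $\mathscr{C}_2\subseteq\mathscr{C}_\cap$ (the covering property of $\mathscr{C}_2$ being supplied by the preceding proposition), then argue by contradiction, using finiteness of $\mathscr{C}'$ to realize the pointwise infimum $K(x)=\inf_{S\in\mathscr{C}'}S(x)$ as a minimum at some $S_0$ with $S_0(x)=K(x)>0$ and $S_0\supseteq K$, and finally invoke the maximality clause defining $MD(\mathscr{C},x)$ to force $S_0=K$, contradicting $S_0\in\mathscr{C}\setminus\{K\}$. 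You also correctly flag the two places where care is needed: $S_0$ must lie in $\mathbb{C}(\mathscr{C},x)$ before the $MD$-maximality can be applied (guaranteed by $S_0(x)=K(x)>0$), and the attainment of the minimum is exactly where finiteness enters. The only residual pedantry, shared by the source definition rather than introduced by you, is that $\mathscr{C}'$ must implicitly be nonempty; otherwise the constant-$1$ member of $\mathscr{C}$, which always lies in $\mathscr{C}_2$, could be read as an empty intersection and deleted from $\mathscr{C}_\cap$.
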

\begin{proposition}\cite{d2017fuzzy}
	Let $\mathscr{C}$ be a finite fuzzy covering on $\mathbb{U}$. Then $\mathscr{C}_{\cup}=\mathscr{C}_{1}$.
\end{proposition}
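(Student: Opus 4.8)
The plan is to prove the two inclusions $\mathscr{C}_\cup \subseteq \mathscr{C}_1$ and $\mathscr{C}_1 \subseteq \mathscr{C}_\cup$ separately, in each case arguing directly from the definitions rather than invoking Lemma~\ref{2.1}. Throughout I will use the translation that, for $K\in\mathscr{C}$, the membership $K\in\mathscr{C}_\cup$ means precisely that $K$ is \emph{not} the union of any subfamily of $\mathscr{C}\setminus\{K\}$, whereas $K\in\mathscr{C}_1$ means that $K\in md(\mathscr{C},x)$ for at least one $x\in\mathbb{U}$.

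For the inclusion $\mathscr{C}_\cup\subseteq\mathscr{C}_1$, I would fix $K\in\mathscr{C}_\cup$ and show by contradiction that some $x$ witnesses $K\in md(\mathscr{C},x)$. Suppose not: then for every $x$ with $K(x)>0$ we have $K\in\mathbb{C}(\mathscr{C},x)$ but $K\notin md(\mathscr{C},x)$, so by the defining clause of $md(\mathscr{C},x)$ there exists a set $S_x\in\mathscr{C}$ with $S_x\subsetneq K$ and $S_x(x)=K(x)$. The key step is to verify that the subfamily $\mathscr{C}'=\{S_x : K(x)>0\}\subseteq\mathscr{C}\setminus\{K\}$ has union exactly $K$: since every $S_x\subseteq K$ we get $(\bigcup\mathscr{C}')(y)\le K(y)$ for all $y$; at any $y$ with $K(y)>0$ the particular set $S_y$ gives $(\bigcup\mathscr{C}')(y)\ge S_y(y)=K(y)$; and at any $y$ with $K(y)=0$ every $S_x(y)=0$. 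Hence $K=\bigcup\mathscr{C}'$, contradicting $K\in\mathscr{C}_\cup$. Therefore such an $x$ exists and $K\in md(\mathscr{C},x)\subseteq\mathscr{C}_1$.

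For the reverse inclusion $\mathscr{C}_1\subseteq\mathscr{C}_\cup$, I would fix $K\in md(\mathscr{C},x)$ for some $x$ and again argue by contradiction. If $K=\bigcup\mathscr{C}'$ with $\mathscr{C}'\subseteq\mathscr{C}\setminus\{K\}$, then $K(x)=\sup_{S\in\mathscr{C}'}S(x)$; this is exactly where the finiteness of $\mathscr{C}$ enters, since it guarantees the supremum is attained by some $S\in\mathscr{C}'$, so $S(x)=K(x)>0$. This $S$ satisfies $S\in\mathbb{C}(\mathscr{C},x)$, $S\subseteq K$ and $S(x)=K(x)$, so the defining property of $md(\mathscr{C},x)$ forces $S=K$, contradicting $S\in\mathscr{C}\setminus\{K\}$. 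Thus $K$ admits no such representation and $K\in\mathscr{C}_\cup$.

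I expect the first inclusion to be the more delicate one: the main obstacle is assembling the witnessing subfamily $\{S_x\}$ and checking pointwise that its union recovers $K$, with separate care for the points where $K$ vanishes. The second inclusion reduces to the attainment of a supremum, which is precisely the role played by the finiteness hypothesis, and I would note that finiteness is genuinely needed only in this direction, since over an infinite covering the supremum defining a union need not be realized by a single member.
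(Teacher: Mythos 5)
Your proof is correct. Note that the paper itself gives no argument for this proposition --- it is imported verbatim from D'eer et al.\ \cite{d2017fuzzy} with a citation --- so there is no in-paper proof to compare against; judged on its own, your two-inclusion argument is complete and sound. For $\mathscr{C}_{\cup}\subseteq\mathscr{C}_{1}$ you extract, for each $x$ with $K(x)>0$, a witness $S_x\subsetneq K$ with $S_x(x)=K(x)$ directly from the negation of the $md(\mathscr{C},x)$ condition and verify pointwise that $\bigcup\{S_x\}=K$; this cleanly bypasses Lemma~\ref{2.1} (the route one would otherwise take, obtaining a genuinely \emph{minimal} $K_x\subseteq K$ with $K_x(x)=K(x)$), and as you observe it needs no finiteness at all. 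For $\mathscr{C}_{1}\subseteq\mathscr{C}_{\cup}$ the argument via attainment of the supremum $K(x)=\sup_{S\in\mathscr{C}'}S(x)$ is exactly right, and your localization of the finiteness hypothesis to this direction is accurate: for an infinite covering one can have $K\in md(\mathscr{C},x)$ while $K$ is a union of strictly smaller members whose values at $x$ only approach $K(x)$, so the inclusion genuinely fails there. The only points left tacit --- that $\mathscr{C}'\neq\emptyset$ in the first direction because $K$ is a nonempty fuzzy set, and that in the second direction $K(x)>0$ rules out $\mathscr{C}'=\emptyset$ --- are immediate and do not affect correctness.
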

\begin{proposition}\label{p:3.9}\cite{d2017fuzzy}
	Let $\mathscr{C}$ be a finite fuzzy covering on $\mathbb{U}$. Then for all $x\in U$ it holds that
	\begin{enumerate}
		\item[(1)]
		$md(\mathscr{C}_1,x)=md(\mathscr{C},x)$,
		\item[(2)]
		$MD(\mathscr{C}_2,x)=MD(\mathscr{C},x)$,
		\item[(3)]
		$MD(\mathscr{C}_{\cap},x)=MD(\mathscr{C},x)$.
	\end{enumerate}
	\end{proposition}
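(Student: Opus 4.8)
The plan is to prove each of the three equalities by double inclusion, exploiting a single common mechanism: each of the derived families $\mathscr{C}_1$, $\mathscr{C}_2$ and $\mathscr{C}_{\cap}$ is a subcovering of $\mathscr{C}$, so that $\mathbb{C}(\mathscr{D},x)\subseteq \mathbb{C}(\mathscr{C},x)$ for $\mathscr{D}\in\{\mathscr{C}_1,\mathscr{C}_2,\mathscr{C}_{\cap}\}$, while at the same time the relevant description at $x$ already lives inside $\mathscr{D}$. Concretely, I would first record two facts for each case: (i) $\mathscr{D}\subseteq\mathscr{C}$, hence $\mathbb{C}(\mathscr{D},x)\subseteq\mathbb{C}(\mathscr{C},x)$; and (ii) the relevant description of $\mathscr{C}$ at $x$ (namely $md(\mathscr{C},x)$ when $\mathscr{D}=\mathscr{C}_1$, and $MD(\mathscr{C},x)$ when $\mathscr{D}=\mathscr{C}_2$ or $\mathscr{D}=\mathscr{C}_{\cap}$) is contained in $\mathscr{D}$. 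For $\mathscr{C}_1$ and $\mathscr{C}_2$, fact (ii) is immediate from their definitions as the unions $\bigcup_x md(\mathscr{C},x)$ and $\bigcup_x MD(\mathscr{C},x)$, and fact (i) is the subcovering statement already recorded above.

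For the two inclusions I would argue as follows, treating (1) in detail, since (2) is its order-dual. For the easy direction, take $K\in md(\mathscr{C},x)$. By (ii), $K\in\mathscr{C}_1$ and $K(x)>0$, so $K\in\mathbb{C}(\mathscr{C}_1,x)$; since by (i) every competitor $S\in\mathbb{C}(\mathscr{C}_1,x)$ with $S(x)=K(x)$ and $S\subseteq K$ also lies in $\mathbb{C}(\mathscr{C},x)$, the minimality of $K$ in $\mathscr{C}$ forces $S=K$, so $K\in md(\mathscr{C}_1,x)$. For the reverse inclusion, take $K\in md(\mathscr{C}_1,x)$; then $K\in\mathbb{C}(\mathscr{C},x)$ by (i). Given any $S\in\mathbb{C}(\mathscr{C},x)$ with $S(x)=K(x)$ and $S\subseteq K$, I would invoke Lemma \ref{2.1} (valid since $\mathscr{C}$ is finite) to produce $K'\in md(\mathscr{C},x)$ with $K'(x)=S(x)$ and $K'\subseteq S$. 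By (ii), $K'\in\mathscr{C}_1$, so $K'\in\mathbb{C}(\mathscr{C}_1,x)$ with $K'\subseteq S\subseteq K$ and $K'(x)=K(x)$; minimality of $K$ in $\mathscr{C}_1$ gives $K'=K$, whence $S=K$ and $K\in md(\mathscr{C},x)$. The proof of (2) is identical after replacing $md$, $\subseteq$, $\inf$ by $MD$, $\supseteq$, $\sup$ and using the ascending half of Lemma \ref{2.1}.

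The real work is fact (ii) for part (3), i.e. the claim that $MD(\mathscr{C},x)\subseteq\mathscr{C}_{\cap}$: no maximal-description element can be intersection-reducible. I would prove this by contradiction. Suppose $K\in MD(\mathscr{C},x)$ but $K=\bigcap\mathscr{C}'$ for some $\mathscr{C}'\subseteq\mathscr{C}\setminus\{K\}$. Because $\mathscr{C}$ is finite, $\mathscr{C}'$ is finite, so $K(y)=\min_{S\in\mathscr{C}'}S(y)$ for every $y$; in particular $K(x)=\min_{S\in\mathscr{C}'}S(x)$ is attained at some $S_0\in\mathscr{C}'$. Then $S_0(x)=K(x)>0$, so $S_0\in\mathbb{C}(\mathscr{C},x)$, and $K=\bigcap\mathscr{C}'\subseteq S_0$ with $S_0\neq K$ (as $S_0\in\mathscr{C}\setminus\{K\}$ and the members of the covering are distinct), contradicting the maximality of $K$ in $MD(\mathscr{C},x)$. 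This establishes (ii), while fact (i) for $\mathscr{C}_{\cap}$ is already known.

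With (i) and (ii) in hand, the two inclusions for (3) run exactly as in (2): the easy direction uses $\mathbb{C}(\mathscr{C}_{\cap},x)\subseteq\mathbb{C}(\mathscr{C},x)$, and the reverse direction completes an arbitrary $S\supseteq K$ to a genuine $K'\in MD(\mathscr{C},x)\subseteq\mathscr{C}_{\cap}$ via Lemma \ref{2.1} and then squeezes $K\subseteq S\subseteq K'=K$. The main obstacle is precisely this survival claim for $\mathscr{C}_{\cap}$, since it is the only step that uses the intersection-deletion structure and the finiteness of $\mathscr{C}$ in an essential way; everything else is bookkeeping with the monotonicity built into the definitions of $md$ and $MD$.
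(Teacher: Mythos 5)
Your proof is correct, but there is nothing in this paper to compare it against: Proposition \ref{p:3.9} is stated with a citation to D'eer et al. \cite{d2017fuzzy} and no proof is reproduced here, so your argument has to be judged as a self-contained one, and as such it holds up. The two generic facts you isolate, (i) $\mathscr{D}\subseteq\mathscr{C}$ hence $\mathbb{C}(\mathscr{D},x)\subseteq\mathbb{C}(\mathscr{C},x)$, and (ii) the relevant description of $\mathscr{C}$ at $x$ is contained in $\mathscr{D}$, are exactly what the double inclusion needs; the forward inclusions are immediate from (i), and the reverse inclusions correctly use the finiteness of $\mathscr{C}$ through Lemma \ref{2.1} to manufacture $K'\in md(\mathscr{C},x)$ (resp. $K'\in MD(\mathscr{C},x)$) lying inside $\mathscr{D}$ by (ii), after which the extremality of $K$ in $\mathscr{D}$ collapses $K'$ to $K$ and squeezes $S=K$. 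The only substantive step is your claim (ii) for $\mathscr{C}_{\cap}$, namely $MD(\mathscr{C},x)\subseteq\mathscr{C}_{\cap}$, and the contradiction argument is sound: finiteness lets you pick $S_0\in\mathscr{C}'$ attaining $K(x)=\min_{S\in\mathscr{C}'}S(x)$, and $S_0$ is then a competitor with $S_0(x)=K(x)$, $S_0\supseteq K$, $S_0\neq K$, contradicting maximality. One point you rely on silently: $\mathscr{C}'$ must be nonempty for $S_0$ to exist. If the definition of $\mathscr{C}_{\cap}$ allowed $\mathscr{C}'=\emptyset$ with the lattice convention that $\bigcap\emptyset$ is the constant-$1$ fuzzy set, then that set (which always belongs to $MD(\mathscr{C},x)$, since its only fuzzy superset is itself) would be deleted from $\mathscr{C}_{\cap}$ and part (3) would actually fail; the paper's standing assertion that $\mathscr{C}_{\cap}$ is again a fuzzy covering confirms the nonempty reading, so your choice of $S_0$ is legitimate, but this deserves one explicit sentence.
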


 Now, we consider about the other fuzzy coverings $\mathscr{C}_3$ and $\mathscr{C}_4$.
\begin{definition}
	Let $\mathscr{C} $ be a fuzzy covering on $\mathbb{U}$, $N_{4}^{\mathscr{C}}$ an overlap function-based fuzzy neighborhood operator defined by $O$ and $N_{1}^{\mathscr{C}}$ an overlap function based fuzzy neighborhood operator defined by $I_O$. Then define the following collections of fuzzy set:
		\begin{enumerate}
	
		\item[(1)]
		$\mathscr{C}_{3}=\{N_{1}^{\mathscr{C}}(x):x\in \mathbb{U}\},$
		\item[(2)]
		$\mathscr{C}_{4}=\{N_{4}^{\mathscr{C}}(x):x\in \mathbb{U}\}.$
		
	\end{enumerate}
\end{definition}

\begin{proposition}
	Let $\mathscr{C}$ be a fuzzy covering on $\mathbb{U}$, $\mathscr{C}_4$ a fuzzy set defined by an overlap function $O$ and $\mathscr{C}_3$ a fuzzy set defined by the $R_O$-implicator of $O$. If $O$ satisfies (O7), the fuzzy sets $\mathscr{C}_3$ and $\mathscr{C}_4$ are two fuzzy coverings.
\end{proposition}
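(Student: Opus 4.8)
The plan is to reduce the whole statement to the reflexivity of the two operators involved, which has already been established under the hypothesis (O7). Recall that a family of nonempty fuzzy sets is a fuzzy covering on $\mathbb{U}$ precisely when every point of $\mathbb{U}$ is attained with membership degree $1$ by at least one member of the family (equivalently, the union of the family is the constant map $1$). Thus, for each of $\mathscr{C}_3=\{N_1^{\mathscr{C}}(x):x\in\mathbb{U}\}$ and $\mathscr{C}_4=\{N_4^{\mathscr{C}}(x):x\in\mathbb{U}\}$, I must verify two things only: that no member is the empty fuzzy set, and that each point is fully covered by some member.

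First I would invoke the reflexivity proposition proved above: since $O$ satisfies (O7), both $N_1^{\mathscr{C}}$ and $N_4^{\mathscr{C}}$ are reflexive, i.e. $N_1^{\mathscr{C}}(x)(x)=1$ and $N_4^{\mathscr{C}}(x)(x)=1$ for every $x\in\mathbb{U}$. This single fact does all the work. On one hand it yields nonemptiness: because $N_i^{\mathscr{C}}(x)(x)=1>0$ for $i\in\{1,4\}$, each fuzzy set $N_i^{\mathscr{C}}(x)$ has nonempty support and hence $N_i^{\mathscr{C}}(x)\neq\emptyset$. On the other hand it yields the covering condition: fixing an arbitrary $x\in\mathbb{U}$, the particular member $N_1^{\mathscr{C}}(x)\in\mathscr{C}_3$ satisfies $N_1^{\mathscr{C}}(x)(x)=1$, so $x$ is covered with full membership (in supremum form, $\sup_{z\in\mathbb{U}}N_1^{\mathscr{C}}(z)(x)=1$); as $x$ was arbitrary, $\mathscr{C}_3$ is a fuzzy covering, and the identical argument with $N_4^{\mathscr{C}}(x)(x)=1$ settles $\mathscr{C}_4$.

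Since the heavy lifting is delegated to the earlier reflexivity result, there is no real computational obstacle here; it is worth isolating where (O7) enters, as that is the only substantive hypothesis. For $N_1^{\mathscr{C}}$ one needs $I_O(t,t)=1$ for all $t\in[0,1]$, and this is exactly what (O7) provides: from $O(t,1)\leq t$ one gets $1\in\{z:O(t,z)\leq t\}$, whence $I_O(t,t)=1$ and the infimum defining $N_1^{\mathscr{C}}(x)(x)$ collapses to $1$; for $N_4^{\mathscr{C}}$ the diagonal value uses only $O(1,1)=1$ from (O3) together with the defining property of $\mathscr{C}$, so (O7) is retained merely to state both cases uniformly. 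The one point that genuinely deserves care is conceptual rather than technical: one must read the fuzzy covering definition correctly, namely that it suffices for each point to be fully covered by \emph{some} member (rather than demanding a single member that is constantly $1$), because in general distinct points $x$ are witnessed by distinct diagonal members $N_i^{\mathscr{C}}(x)$. I would make this explicit and note that passing from the indexed family to the underlying set of fuzzy sets discards no witness, so the covering property is unaffected by the identification of coincident members.
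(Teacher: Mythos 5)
Your proof is correct and takes exactly the paper's route: the paper's own proof is the one-line observation that the claim follows immediately from the reflexivity of $N_1^{\mathscr{C}}$ and $N_4^{\mathscr{C}}$, established earlier under (O7). Your extra bookkeeping (nonemptiness of each $N_i^{\mathscr{C}}(x)$, the per-point witness reading of the fuzzy covering definition, and the remark that (O7) is only genuinely needed for $N_1^{\mathscr{C}}$) merely spells out what the paper leaves implicit.
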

\begin{proof}
	we can prove immediately from the fact that operators $N_1^{\mathscr{C}}$ and $N_4^{\mathscr{C}}$ are all reflexive.
\end{proof}
In this part, five generalized fuzzy coverings $\mathscr{C}_1(\mathscr{C}_1=\mathscr{C}_{\cup}), \mathscr{C}_2, \mathscr{C}_3, \mathscr{C}_4$ and $\mathscr{C}_{\cap}$ are derived by the original fuzzy covering $\mathscr{C}$.
\section{The groupings of overlap function-based fuzzy neighborhood operators based on a finite fuzzy covering and their lattice relationships}\label{Section 4}
Combining four original operators with five derived coverings, twenty new operators can be obtained. Based on the previous sections, the relations among the total twenty-four operators are discussed in the two following parts. Furthermore, by the end of this section, the similarities and differences between new kinds of operators and others discussed in \cite{d2017fuzzy} will be analyzed.
\subsection{Equalities among overlap function-based fuzzy neighborhood operators based on a finite fuzzy covering}
In this subsection, the equalities among twenty four overlap function-based fuzzy neighborhood operators are discussed. When an overlap function satisfies the exchange principle, it can be seen as a $t$-norm and the conclusions of coverings and operators which are defined by $t$-norm have proven in \cite{d2017fuzzy}. Therefore, we need to prove whether the groupings stated in \cite{d2017fuzzy} are still maintained when the overlap function which is used to establish fuzzy coverings and fuzzy neighborhood operators does not satisfy the property (O6). And in the end of this part, we will divide the operators in seventeen different groups. Note that, in this part, we assume the original covering $\mathscr{C}$ is finite, therefore, the derived fuzzy covering $\mathscr{C}_{\cup}$ is equal to the covering $\mathscr{C}_1$. So that, we just need to discuss the relations among twenty one operators.

First, we consider the operators which are $N_1^{\mathscr{C}}$, $N_1^{\mathscr{C}_1}$, $N_1^{\mathscr{C}_3}$ and $N_1^{\mathscr{C}_{\cap}}$.
\begin{proposition}
	Let $\mathscr{C}$ be a fuzzy covering on $\mathbb{U}$ and $I_O$ an $R_O$-implicator. $N_1^{\mathscr{C}}$, $N_1^{\mathscr{C}_1}$ and $N_1^{\mathscr{C}_{\cap}}$ are three fuzzy neighborhood operators defined by $I_O$. Then
		\begin{enumerate}
			\item[(1)]
		$N_1^{\mathscr{C}}=N_1^{\mathscr{C}_1},$
		\item[(2)]
		$N_1^{\mathscr{C}}=N_1^{\mathscr{C}_{\cap}}.$
		\end{enumerate}
\end{proposition}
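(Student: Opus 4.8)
The plan is to treat the two equalities separately: part (1) via a reduction to minimal descriptions, and part (2) via a direct domination argument on the infimum. For (1), I would first note that since $\mathscr{C}$ is finite, $\mathscr{C}_1=\bigcup\{md(\mathscr{C},x):x\in\mathbb{U}\}$ is again a finite fuzzy covering, so Proposition \ref{p:3.1} applies to both $\mathscr{C}$ and $\mathscr{C}_1$ and rewrites the two operators as $N_1^{\mathscr{C}}(x)(y)=\inf_{K\in md(\mathscr{C},x)}I_O(K(x),K(y))$ and $N_1^{\mathscr{C}_1}(x)(y)=\inf_{K\in md(\mathscr{C}_1,x)}I_O(K(x),K(y))$. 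The equality is then immediate from Proposition \ref{p:3.9}(1), which gives $md(\mathscr{C}_1,x)=md(\mathscr{C},x)$ for every $x$; so (1) is pure bookkeeping once these results are in hand.

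For part (2) I would establish $N_1^{\mathscr{C}}\le N_1^{\mathscr{C}_{\cap}}$ and $N_1^{\mathscr{C}}\ge N_1^{\mathscr{C}_{\cap}}$ pointwise. The first inequality is free from $\mathscr{C}_{\cap}\subseteq\mathscr{C}$, because the infimum defining $N_1^{\mathscr{C}}$ then ranges over a larger family and is no larger. The substance is the reverse inequality, i.e. showing that every fuzzy set deleted in forming $\mathscr{C}_{\cap}$ is redundant for the infimum. Fix $x,y$ and take $K\in\mathscr{C}\setminus\mathscr{C}_{\cap}$, so by definition $K=\bigcap\mathscr{C}'$ for some $\mathscr{C}'\subseteq\mathscr{C}\setminus\{K\}$. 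Using finiteness I pick $S^{\ast}\in\mathscr{C}'$ with $S^{\ast}(y)=\min_{S\in\mathscr{C}'}S(y)=K(y)$; then $S^{\ast}(x)\ge K(x)$ since $K$ is the pointwise intersection. The key step is the first-place antitonicity (I1) of the $R_O$-implicator: it yields $I_O(S^{\ast}(x),S^{\ast}(y))=I_O(S^{\ast}(x),K(y))\le I_O(K(x),K(y))$, so $K$ is dominated from below by a member of $\mathscr{C}\setminus\{K\}$.

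It remains to convert these pointwise dominations into equality of the infima. Since $\mathscr{C}$ is finite the infimum is a minimum; if it is attained at some $K_0\notin\mathscr{C}_{\cap}$, the dominating $S^{\ast}$ above satisfies $S^{\ast}\supsetneq K_0$ and realizes the same minimal value, and iterating strictly enlarges the fuzzy set under $\subseteq$. In a finite covering this process must terminate at a $\subseteq$-maximal member, which cannot be written as a proper intersection of other sets and therefore lies in $\mathscr{C}_{\cap}$; hence the common minimum is attained inside $\mathscr{C}_{\cap}$ and $N_1^{\mathscr{C}}\ge N_1^{\mathscr{C}_{\cap}}$, completing the equality. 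I expect this redundancy/termination step to be the main obstacle, and I would emphasize that part (2) cannot be shortcut the way part (1) was: Proposition \ref{p:3.9} only supplies $MD(\mathscr{C}_{\cap},x)=MD(\mathscr{C},x)$, and in general $md(\mathscr{C},x)\not\subseteq\mathscr{C}_{\cap}$, so the clean minimal-description reduction is unavailable and the direct domination argument is genuinely needed.
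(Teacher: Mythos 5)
Your proof is correct, and in outline it is the same as the paper's: part (1) is obtained exactly as you do it, from Proposition~\ref{p:3.1} together with $md(\mathscr{C}_1,x)=md(\mathscr{C},x)$ (Proposition~\ref{p:3.9}(1)), and the inequality $N_1^{\mathscr{C}}\leq N_1^{\mathscr{C}_{\cap}}$ comes from $\mathscr{C}_{\cap}\subseteq\mathscr{C}$. For the reverse inequality the paper also takes a minimizer $K\in\mathscr{C}$ (using finiteness), writes $K=\bigcap\mathscr{C}'$ when $K\notin\mathscr{C}_{\cap}$, picks $K'\in\mathscr{C}'$ with $K'(y)=K(y)$ and $K\subseteq K'$, and applies first-place antitonicity --- precisely your domination step. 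The one genuine difference is how the argument lands inside $\mathscr{C}_{\cap}$: the paper asserts that the witness collection $\mathscr{C}'$ may be taken inside $\mathscr{C}_{\cap}$, which the definition of $\mathscr{C}_{\cap}$ does not directly provide (it only gives $\mathscr{C}'\subseteq\mathscr{C}\setminus\{K\}$), whereas you use only the definitional guarantee and close the gap with a strictly-increasing-chain termination argument (equivalently: take a $\subseteq$-maximal minimizer; if it were outside $\mathscr{C}_{\cap}$, your domination step would produce a strictly larger minimizer, a contradiction). So your version is slightly longer but self-contained and actually repairs a step the paper leaves implicit, while the paper's version is shorter at the cost of that unjustified reduction of $\mathscr{C}'$ into $\mathscr{C}_{\cap}$. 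Your closing observation that the minimal-description shortcut used in (1) is unavailable for (2) is also consistent with the paper, which indeed proves (2) by the direct argument rather than via Proposition~\ref{p:3.9}.
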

\begin{proof}
	\quad
	\begin{enumerate}
		\item[(1)]
		This follows immediately from Proposition \ref{p:3.1} and (1) of Proposition \ref{p:3.9}.
		\item[(2)]
		Since $\mathscr{C}_{\cap}\subseteq \mathscr{C}$, we can get $N_1^{\mathscr{C}}(x)\subseteq N_1^{\mathscr{C}_{\cap}}(x)$ for all $x\in \mathbb{U}$. Now we need to prove $N_1^{\mathscr{C}_{\cap}}(x)\subseteq N_1^{\mathscr{C}}(x)$, take $y\in \mathbb{U}$, since $\mathscr{C}$ is a finite fuzzy covering, let $K\in \mathscr{C}$ such that $N_1^{\mathscr{C}}(x)(y)=I_O(K(x),K(y)).$ If $K\in \mathscr{C}$, we can get $N_1^{\mathscr{C}_{\cap}}(x)(y)\leq I_O(K(x),K(y))=N_1^{\mathscr{C}}(x)(y)$. In other condition, if $K\neq \mathscr{C}_{\cap}$, a collection $\mathscr{C}^{'}\subseteq \mathscr{C}_{\cap}$ can be found such that $K=\bigcap \mathscr{C}^{'}$. Since $\mathscr{C}$ is finite, there exists $K'\in \mathscr{C}^{'}$ with $K'(y)=K(y)$ and $K\subseteq K'.$ Hence, $I_O(K(x),K(y))\geq I_O((K'(x),K'(y))$. Since $ \mathscr{C}^{'}\subseteq \mathscr{C}$ then we have $K'\in \mathscr{C}$ and the infimum of $N_1^{\mathscr{C}}(x)(y)$ is reached in $K$. So that, $N_1^{\mathscr{C}_{\cap}}(x)\leq I_O((K'(x),K'(y))=I_O(K(x),K(y))=N_1^{\mathscr{C}}(x)(y)$, then we get the conclusion that $N_1^{\mathscr{C}}(x)\subseteq N_1^{\mathscr{C}_{\cap}}(x)$. In both cases, we conclude that $N_1^{\mathscr{C}}=N_1^{\mathscr{C}_{\cap}}$.
	\end{enumerate}
\end{proof}
Especially, when the overlap function which is used to define $R_O$-implicator $I_O$ satisfies (O6), we can also conclude that $N_1^{\mathscr{C}}=N_1^{\mathscr{C}_{3}}$. If not, we get the opposite conclusion.
\begin{example}\label{e:4.1}
	Let $\mathbb{U}$ be a set with $\mathbb{U}=\{x,y\}$ and $\mathscr{C}$ a fuzzy covering with $\mathscr{C}=\{K_1, K_2\}$. And for covering $\mathscr{C}$ we have  $K_1=\frac{1}{x}+\frac{0.905}{y}$ and $K_2=\frac{0.745}{x}+\frac{1}{y}$, considering the overlap function $O_2^{V}$ and the $R_O$-implicator $I_{O_2^V}$, we can get the fuzzy covering $\mathscr{C}_3=\{N_1^{\mathscr{C}}(x),N_1^{\mathscr{C}}(y)\}$ with $N_1^{\mathscr{C}}(x)=\frac{1}{x}+\frac{0.95}{y}$ and $N_1^{\mathscr{C}}(y)=\frac{0.85}{x}+\frac{1}{y}$. We have that $N_1^{\mathscr{C}}(x)(y)=0.95$ and $N_1^{\mathscr{C}_3}(x)(y)\geq 0.974$, i.e., $N_1^{\mathscr{C}}\neq N_1^{\mathscr{C}_3}$.
\end{example}
Next, we will discuss the relation between the operators $N_2^{\mathscr{C}}$ and $N_2^{\mathscr{C}_1}$.
\begin{proposition}
	Let $\mathscr{C}$ be a fuzzy covering and $O$ an overlap function. $N_2^{\mathscr{C}}$ and $N_2^{\mathscr{C}_1}$ are two fuzzy neighborhood operators constructed by $O$. Then we have $N_2^{\mathscr{C}}=N_2^{\mathscr{C}_1}$.
\end{proposition}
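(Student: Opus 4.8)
The plan is to exploit the fact that the operator $N_2^{\mathscr{C}}$ is built \emph{entirely} out of the fuzzy minimal description $md(\mathscr{C},x)$, the overlap function $O$ entering only as a fixed aggregation rule applied over that index set. Hence for arbitrary $x,y\in\mathbb{U}$ the definition gives
$$N_2^{\mathscr{C}_1}(x)(y)=\underset{K\in md(\mathscr{C}_1,x)}{\sup}O(K(x),K(y)),\qquad N_2^{\mathscr{C}}(x)(y)=\underset{K\in md(\mathscr{C},x)}{\sup}O(K(x),K(y)),$$
so the whole proposition reduces to the purely covering-theoretic identity $md(\mathscr{C}_1,x)=md(\mathscr{C},x)$ for every $x$.

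First I would invoke part (1) of Proposition \ref{p:3.9}, which (recall that throughout this section $\mathscr{C}$ is assumed finite, so the hypotheses of that proposition are met) states exactly $md(\mathscr{C}_1,x)=md(\mathscr{C},x)$. Substituting this equality of index sets into the two displays above makes the right-hand sides literally identical, whence $N_2^{\mathscr{C}_1}(x)(y)=N_2^{\mathscr{C}}(x)(y)$; since $x,y\in\mathbb{U}$ were arbitrary, $N_2^{\mathscr{C}}=N_2^{\mathscr{C}_1}$. If one preferred to avoid quoting Proposition \ref{p:3.9} and argue directly, I would prove $md(\mathscr{C}_1,x)=md(\mathscr{C},x)$ by two inclusions: the inclusion $md(\mathscr{C},x)\subseteq\mathscr{C}_1$ is immediate from the definition $\mathscr{C}_1=\bigcup\{md(\mathscr{C},z):z\in\mathbb{U}\}$, so any $K\in md(\mathscr{C},x)$ still lies in $\mathbb{C}(\mathscr{C}_1,x)$ and, because $\mathscr{C}_1\subseteq\mathscr{C}$, remains minimal there; conversely, minimality in $\mathscr{C}_1$ forces minimality in $\mathscr{C}$ by pushing any strictly smaller competitor through Lemma \ref{2.1} back into $md(\mathscr{C},x)\subseteq\mathscr{C}_1$.

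The point worth stressing is where the \emph{difficulty is not}. Unlike the comparison of $N_1^{\mathscr{C}}$ with $N_1^{\mathscr{C}_3}$, which fails without the exchange principle (see Example \ref{e:4.1}), this statement requires no property of the overlap function whatsoever --- neither (O6) nor 1-section deflation (O7) --- precisely because $O$ is applied to the \emph{same} set $md(\mathscr{C},x)$ on both sides and never needs to be manipulated algebraically. Consequently there is no genuine obstacle in the aggregation step; all the content sits in the invariance $md(\mathscr{C}_1,x)=md(\mathscr{C},x)$, which is already available as Proposition \ref{p:3.9}(1), so the remaining work is a one-line substitution.
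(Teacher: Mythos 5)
Your proposal is correct and follows essentially the same route as the paper: the paper's proof is exactly the one-line observation that the claim follows from Proposition \ref{p:3.9}(1), i.e., $md(\mathscr{C}_1,x)=md(\mathscr{C},x)$, after which the two suprema range over identical index sets. Your additional remarks (the direct two-inclusion argument and the observation that no property of $O$ such as (O6) or (O7) is needed) are accurate but supplementary to what the paper records.
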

\begin{proof}
	This follows immediately from (1) of Proposition \ref{p:3.9}.
\end{proof}
Thirdly, we consider whether the operators $N_3^{\mathscr{C}}$, $N_3^{\mathscr{C}_2}$ and $N_3^{\mathscr{C}_{\cap}}$ can be divided into the same grouping.
\begin{proposition}
	Let $\mathscr{C}$ be a fuzzy covering on $\mathbb{U}$ and $I_O$ an $R_O$-implicator. $N_3^{\mathscr{C}}$, $N_3^{\mathscr{C}_2}$ and $N_3^{\mathscr{C}_{\cap}}$ are three fuzzy operators defined by $I_O$. Then
		\begin{enumerate}
		\item[(1)]
		$N_3^{\mathscr{C}}=N_3^{\mathscr{C}_2},$
		\item[(2)]
		$N_3^{\mathscr{C}}=N_3^{\mathscr{C}_{\cap}}.$
	\end{enumerate}
\end{proposition}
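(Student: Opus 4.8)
The plan is to exploit the structural feature that $N_3^{\mathscr{C}}$ is defined \emph{purely} in terms of the maximal description $MD(\mathscr{C},x)$, so that any derived covering whose maximal description agrees with that of $\mathscr{C}$ must induce the very same operator. Concretely, by definition one has, for each $x,y\in\mathbb{U}$,
$$N_3^{\mathscr{C}}(x)(y)=\underset{K\in MD(\mathscr{C},x)}{\inf}I_O(K(x),K(y)),$$
and the operators $N_3^{\mathscr{C}_2}$ and $N_3^{\mathscr{C}_{\cap}}$ are given by the identical formula with $MD$ taken relative to $\mathscr{C}_2$ and $\mathscr{C}_{\cap}$ respectively. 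Hence both claimed equalities reduce entirely to showing that the index families over which these infima are taken coincide, since the integrand $I_O(K(x),K(y))$ is the same in all three cases.

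For part (1), I would invoke part (2) of Proposition \ref{p:3.9}, which yields $MD(\mathscr{C}_2,x)=MD(\mathscr{C},x)$ for every $x\in\mathbb{U}$; recall that throughout this subsection $\mathscr{C}$ is assumed finite, so Proposition \ref{p:3.9} applies and $\mathscr{C}_2$ is a genuine (sub)covering, making $N_3^{\mathscr{C}_2}$ well defined. Because the two infima then range over identical collections of fuzzy sets, they are equal for all $x,y\in\mathbb{U}$, which gives $N_3^{\mathscr{C}}=N_3^{\mathscr{C}_2}$. For part (2), the argument is verbatim the same, now using part (3) of Proposition \ref{p:3.9}, namely $MD(\mathscr{C}_{\cap},x)=MD(\mathscr{C},x)$, to conclude $N_3^{\mathscr{C}}=N_3^{\mathscr{C}_{\cap}}$.

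I expect essentially no obstacle here, and it is worth noting why. Unlike the earlier equality $N_1^{\mathscr{C}}=N_1^{\mathscr{C}_{\cap}}$, whose proof required a direct comparison of infima over the full coverings (the operator $N_1$ ranges over all of $\mathscr{C}$, so one first had to reduce to the minimal description via Proposition \ref{p:3.1} and then argue about $\bigcap \mathscr{C}'$ representations), the operator $N_3$ is by construction already an infimum over the maximal description. Consequently the preservation results of Proposition \ref{p:3.9} apply directly, and no auxiliary reduction step is needed. The only point I would state explicitly for completeness is that $\mathscr{C}_2$ and $\mathscr{C}_{\cap}$ are indeed fuzzy coverings, so that the operators on the right-hand sides are well defined; this is furnished by the earlier propositions of this section.
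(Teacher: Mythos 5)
Your proof is correct and is essentially the paper's own argument: the paper also derives both equalities immediately from parts (2) and (3) of Proposition \ref{p:3.9}, since $N_3$ is defined as an infimum over the maximal description, which those statements show is preserved when passing to $\mathscr{C}_2$ and $\mathscr{C}_{\cap}$. Your additional remarks on finiteness and well-definedness are sound but not part of the paper's (one-line) proof.
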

\begin{proof}
	This follows immediately from (2) and (3) of Proposition \ref{p:3.9}.
\end{proof}
Finally, we consider about three operators $N_4^{\mathscr{C}}$, $N_4^{\mathscr{C}_2}$ and $N_4^{\mathscr{C}_{\cap}}$, and they are also equal to each other.
\begin{proposition}
	Let $\mathscr{C}$ be a fuzzy covering on $\mathbb{U}$ and $O$ an overlap function. $N_4^{\mathscr{C}}$, $N_4^{\mathscr{C}_2}$ and $N_4^{\mathscr{C}_{\cap}}$ are three fuzzy neighborhood operators based on $O$, then
	\begin{enumerate}
		\item[(1)]
		$N_4^{\mathscr{C}}=N_4^{\mathscr{C}_2},$
		\item[(2)]
		$N_4^{\mathscr{C}}=N_4^{\mathscr{C}_{\cap}}.$
	\end{enumerate}
\end{proposition}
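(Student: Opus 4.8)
The plan is to deduce both identities from Proposition \ref{p:3.2} together with Proposition \ref{p:3.9}, exactly as in the three preceding propositions but now for the supremum-type operator $N_4^{\mathscr{C}}$. The underlying idea is that Proposition \ref{p:3.2} shows the value $N_4^{\mathscr{C}}(x)(y)=\underset{K\in\mathscr{C}}{\sup}\,O(K(x),K(y))$ depends only on the maximal description $MD(\mathscr{C},x)$; so once the relevant maximal descriptions are shown to be preserved when passing to the derived coverings, the operators must coincide.

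For part (1), I would first record that $\mathscr{C}_2$ is a finite fuzzy subcovering of the finite covering $\mathscr{C}$, so that Proposition \ref{p:3.2} applies to $\mathscr{C}_2$ as well as to $\mathscr{C}$. Applying it to both coverings gives, for all $x,y\in\mathbb{U}$,
$$N_4^{\mathscr{C}}(x)(y)=\underset{K\in MD(\mathscr{C},x)}{\sup}O(K(x),K(y)),\qquad N_4^{\mathscr{C}_2}(x)(y)=\underset{K\in MD(\mathscr{C}_2,x)}{\sup}O(K(x),K(y)).$$
By part (2) of Proposition \ref{p:3.9} we have $MD(\mathscr{C}_2,x)=MD(\mathscr{C},x)$ for every $x\in\mathbb{U}$, so the two suprema range over the same family of fuzzy sets and are therefore equal pointwise; hence $N_4^{\mathscr{C}}=N_4^{\mathscr{C}_2}$.

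For part (2), the argument is identical with $\mathscr{C}_{\cap}$ in place of $\mathscr{C}_2$. Since $\mathscr{C}$ is finite and $\mathscr{C}_{\cap}\subseteq\mathscr{C}$, the covering $\mathscr{C}_{\cap}$ is again a finite fuzzy covering, so Proposition \ref{p:3.2} yields $N_4^{\mathscr{C}_{\cap}}(x)(y)=\underset{K\in MD(\mathscr{C}_{\cap},x)}{\sup}O(K(x),K(y))$, while part (3) of Proposition \ref{p:3.9} gives $MD(\mathscr{C}_{\cap},x)=MD(\mathscr{C},x)$; comparing with the expression for $N_4^{\mathscr{C}}$ forces $N_4^{\mathscr{C}}=N_4^{\mathscr{C}_{\cap}}$.

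The only point requiring care, and the one I expect to be the main (though minor) obstacle, is verifying the hypotheses needed to invoke Proposition \ref{p:3.2} on the derived coverings: one must confirm that $\mathscr{C}_2$ and $\mathscr{C}_{\cap}$ are still finite fuzzy coverings (guaranteed by the finiteness of $\mathscr{C}$ together with the earlier subcovering propositions) and that the maximal descriptions appearing in Proposition \ref{p:3.2} are computed inside the respective coverings, so that Proposition \ref{p:3.9}(2),(3) can legitimately identify them with $MD(\mathscr{C},x)$. Once these matchings are in place the equalities follow with no further computation, since the monotonicity (O4) and boundary behaviour (O2) of $O$ needed to collapse the supremum onto $MD$ have already been absorbed into Proposition \ref{p:3.2} through Lemma \ref{2.1}.
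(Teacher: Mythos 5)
Your proof is correct and follows exactly the paper's own route: the paper's proof consists of the single line ``This follows immediately from Proposition \ref{p:3.2} and (2) and (3) of Proposition \ref{p:3.9},'' which is precisely the argument you spell out. Your additional care in checking that $\mathscr{C}_2$ and $\mathscr{C}_{\cap}$ remain finite fuzzy coverings (so that Proposition \ref{p:3.2} applies to them) only makes explicit what the paper leaves implicit.
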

\begin{proof}
	This follows immediately from Proposition \ref{p:3.2} and (2) and (3) of Proposition \ref{p:3.9}.
\end{proof}
Now, we have seventeen different groups of overlap function based neighborhood operators which are listed in Table ~\ref{tab:a}.
\begin{table}[H]\caption{Overlap function-based fuzzy neighborhood operators based on fuzzy coverings.}\label{tab:a}
	\begin{center}
		\begin{tabular}{l|l|l|l}
			\hline
			Group & Operators  & Group & Operators \\
			\hline
			\rule{0pt}{12pt}
			$A1$.  & $N_{1}^{\mathscr{C}}$, $N_{1}^{\mathscr{C}_{1}}$,  $N_{1}^{\mathscr{C}_{\cap}}$ & $G$. & $N_{1}^{\mathscr{C}_{4}}$\\
			\rule{0pt}{16pt}
			$A2$.& $N_{2}^{\mathscr{C}_{3}}$ & $H1$. & $N_{4}^{\mathscr{C}}$, $N_{4}^{\mathscr{C}_{2}}$, $N_{4}^{\mathscr{C}_{\cap}}$\\
			\rule{0pt}{16pt}
			$A3$. & $N_{1}^{\mathscr{C}_{3}}$ & $H2$. & $N_{2}^{\mathscr{C}_{2}}$\\
			\rule{0pt}{16pt}
			$B$. & $N_{3}^{\mathscr{C}_{1}}$ & $I$. & $N_{2}^{\mathscr{C}_{4}}$\\
			\rule{0pt}{16pt}
			$C$. & $N_{3}^{\mathscr{C}_{3}}$ & $J$. & $N_{3}^{\mathscr{C}_{4}}$\\
			\rule{0pt}{16pt}
			$D$. & $N_{4}^{\mathscr{C}_{3}}$ & $K$. & $N_{4}^{\mathscr{C}_{4}}$ \\
			\rule{0pt}{16pt}
			$E$. & $N_{2}^{\mathscr{C}}$, $N_{2}^{\mathscr{C}_{1}}$ & $L$. & $N_{4}^{\mathscr{C}_{1}}$ \\
			\rule{0pt}{16pt}
			$F1$. & $N_{3}^{\mathscr{C}}$, $N_{3}^{\mathscr{C}_{2}}$, $N_{3}^{\mathscr{C}_{\cap}}$ & $M$. & $N_{2}^{\mathscr{C}_{\cap}}$\\
			\rule{0pt}{16pt}
			$F2$. & $N_{1}^{\mathscr{C}_2}$ & \quad & \quad \\
			\hline
		\end{tabular}
	\end{center}

\end{table}
\subsection{A lattice of overlap function-based fuzzy neighborhood operators based on a finite fuzzy covering}\label{Section 5}
The partial order relations among the different groups of overlap function- based fuzzy neighborhood operators of Table~\ref{tab:a} which are discussed in this subsection. Now we give some assumptions in this part to guarantee all equalities of Table~\ref{tab:a}. We assume $\mathscr{C}$ to be a finite fuzzy covering, $O$ an overlap function, it satisfies (O7), which is used to construct the fuzzy covering $\mathscr{C}_4$ and the operators $N_2^{\mathscr{C}_j}$ and $N_4^{\mathscr{C}_j}$ and $I_O$ its $R_O$-implicator that is used to define covering $\mathscr{C}_3$, and the operators $N_1^{\mathscr{C}_j}$ and $N_3^{\mathscr{C}_j}$ which are also constructed with $I_O$.

We give a definition of partial order relation $\leq$ between overlap unction-based fuzzy neighborhood operators: $N\leq N'$ if only if $\forall x,y\in \mathbb{U}: N(x)(y)\leq N'(x)(y).$ If two operators $N$ and $N'$ are incomparable, we have that either $N\leq N'$ or $N\geq N'$ does not hold.

Firstly, we consider four fuzzy operators $N_1^{\mathscr{C}}$, $N_2^{\mathscr{C}}$, $N_3^{\mathscr{C}}$ and $N_4^{\mathscr{C}}$. If overlap function $O$, which defines $I_O$ and operators, satisfies (O6), can be consider as a left-continuous t-norm. Hence, we have $N_1^{\mathscr{C}}\leq N_2^{\mathscr{C}}$, $N_1^{\mathscr{C}}\leq N_3^{\mathscr{C}}$, $N_2^{\mathscr{C}}\leq N_4^{\mathscr{C}}$ and $N_3^{\mathscr{C}}\leq N_4^{\mathscr{C}}$. If overlap function does not satisfies (O6), we get a different conclusion.
\begin{proposition}\label{p:5.1}
	Let $\mathscr{C}$ be a fuzzy covering on $\mathbb{U}$, $O$ an overlap function and $I_O$ an $R_O$-implicator. $N_2^{\mathscr{C}}$ and $N_4^{\mathscr{C}}$ are two fuzzy operators defined by $I_O$, $N_1^{\mathscr{C}}$ and $N_3^{\mathscr{C}}$ are two fuzzy operators defined by $O$. Then
		\begin{enumerate}
		\item [(1)]
		$N_1^{\mathscr{C}}\leq N_3^{\mathscr{C}},$
		\item [(2)]
		$N_2^{\mathscr{C}}\leq N_4^{\mathscr{C}}.$
	\end{enumerate}
\end{proposition}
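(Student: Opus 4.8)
The plan is to reduce both inequalities to the elementary monotonicity of infima and suprema under set inclusion, so that neither (O6) nor (O7) is actually needed for this particular statement. Recall the definitions in play: $N_1^{\mathscr{C}}(x)(y)=\inf_{K\in\mathscr{C}}I_O(K(x),K(y))$ and $N_3^{\mathscr{C}}(x)(y)=\inf_{K\in MD(\mathscr{C},x)}I_O(K(x),K(y))$, while $N_2^{\mathscr{C}}(x)(y)=\sup_{K\in md(\mathscr{C},x)}O(K(x),K(y))$ and $N_4^{\mathscr{C}}(x)(y)=\sup_{K\in\mathscr{C}}O(K(x),K(y))$. The only structural facts I would invoke are the containments $md(\mathscr{C},x)\subseteq\mathbb{C}(\mathscr{C},x)\subseteq\mathscr{C}$ and $MD(\mathscr{C},x)\subseteq\mathbb{C}(\mathscr{C},x)\subseteq\mathscr{C}$, which hold for every $x\in\mathbb{U}$ directly from the definitions of the fuzzy minimal and maximal descriptions.

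For part (1), I would fix $x,y\in\mathbb{U}$ and observe that $MD(\mathscr{C},x)$ is a subfamily of $\mathscr{C}$, so on the right-hand side the quantity $I_O(K(x),K(y))$ is minimized over a smaller index set than on the left. Since the infimum taken over a larger family can only be smaller, this gives $\inf_{K\in\mathscr{C}}I_O(K(x),K(y))\leq\inf_{K\in MD(\mathscr{C},x)}I_O(K(x),K(y))$, that is $N_1^{\mathscr{C}}(x)(y)\leq N_3^{\mathscr{C}}(x)(y)$. As $x,y$ were arbitrary, this yields $N_1^{\mathscr{C}}\leq N_3^{\mathscr{C}}$.

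For part (2), I would again fix $x,y\in\mathbb{U}$. Now $md(\mathscr{C},x)$ is a subfamily of $\mathscr{C}$ and the aggregation is a supremum; since the supremum over a smaller family can only be smaller, I obtain $\sup_{K\in md(\mathscr{C},x)}O(K(x),K(y))\leq\sup_{K\in\mathscr{C}}O(K(x),K(y))$, i.e. $N_2^{\mathscr{C}}(x)(y)\leq N_4^{\mathscr{C}}(x)(y)$, and hence $N_2^{\mathscr{C}}\leq N_4^{\mathscr{C}}$.

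There is essentially no obstacle to overcome here: the entire content lies in matching the direction of the inequality to the type of aggregation, namely an infimum in the $N_1$/$N_3$ comparison and a supremum in the $N_2$/$N_4$ comparison. The one point I would verify is that the index sets are nonempty, so that all the infima and suprema are genuinely comparable; this is guaranteed because $\mathbb{C}(\mathscr{C},x)\neq\emptyset$ for any fuzzy covering, and consequently $md(\mathscr{C},x)$ and $MD(\mathscr{C},x)$ are nonempty as well by Lemma \ref{2.1}.
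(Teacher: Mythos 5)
Your proof is correct and follows exactly the paper's argument: both parts reduce to the inclusions $MD(\mathscr{C},x)\subseteq\mathscr{C}$ and $md(\mathscr{C},x)\subseteq\mathscr{C}$ together with the monotonicity of infima and suprema under shrinking the index set, which is precisely what the paper invokes (your version merely spells out the details, plus a harmless nonemptiness check).
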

\begin{proof}

		\begin{enumerate}
		\item [(1)]
	This follows immediately from the fact that $MD(\mathscr{C},x)\subseteq \mathscr{C}.$
		\item [(2)]
	This follows immediately from the fact that $md(\mathscr{C},x)\subseteq \mathscr{C}.$
	\end{enumerate}
\end{proof}
The following example show that operators $N_1^{\mathscr{C}}$ and $N_2^{\mathscr{C}}$ are incomparable and $N_3^{\mathscr{C}}\leq N_4^{\mathscr{C}}$ also does not hold.
\begin{example}
	Let $\mathbb{U}$ be a set with $\mathbb{U}=\{x,y\}$ and $\mathscr{C}$ a fuzzy covering with $\mathscr{C}=\{K_1, K_2\}$. For $\mathscr{C}$ we have $K_1=\frac{1}{x}+\frac{0.82}{y}$ and $K_2=\frac{0.82}{x}+\frac{1}{y}$. Then, we get $MD(\mathscr{C},x)=md({\mathscr{C},x})=\mathscr{C}=\{K_1, K_2\}$. Now considering overlap function $O_2^V$ and its $R_O$-implicator $I_{O_2^V}$, we get
	\begin{align*}&N_1^{\mathscr{C}}(x)(y)=N_3^{\mathscr{C}}(x)(y)=0.9,\\&N_2^{\mathscr{C}}(x)(y)=N_4^{\mathscr{C}}(x)(y)=0.7048.\end{align*}
	So that, in this condition, we have $N_1^{\mathscr{C}}(x)(y)\geq N_2^{\mathscr{C}}(x)(y)$ and $N_3^{\mathscr{C}}(x)(y)\geq N_4^{\mathscr{C}}(x)(y)$.
\end{example}
Considering other derived coverings, we can get the same conclusions which are like Proposition~\ref{p:5.1}.
\begin{corollary}
	Let $\mathscr{C}$ be a fuzzy covering on $\mathbb{U}$, $O$ an overlap function and $I_{O}$ an $R_O$-implicator. Then
		\begin{enumerate}
		\item [(1)]
		for $\mathscr{C}$ we obtain that $A1 \leq F1$ and $E \leq H1,$
		\item [(2)]
		for $\mathscr{C}_1$ we obtain that $A1 \leq B$ and $E \leq L,$
			\item [(3)]
		for $\mathscr{C}_2$ we obtain that $F2 \leq F1$ and $H2 \leq H1,$
			\item [(4)]
		for $\mathscr{C}_3$ we obtain that $A3 \leq C$ and $A2 \leq D,$
			\item [(5)]
		for $\mathscr{C}_4$ we obtain that $I \leq K$ and $G \leq J,$
			\item [(6)]
		for $\mathscr{C}_{\cap}$ we obtain that $M \leq H1$ and $A1 \leq F1.$
	\end{enumerate}
\end{corollary}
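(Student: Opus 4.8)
The plan is to recognize that every one of the twelve inequalities in the statement is nothing more than Proposition~\ref{p:5.1} read off for the appropriate derived covering. Unwinding the group labels of Table~\ref{tab:a}, each inequality whose left side is an $N_1$-group and whose right side is an $N_3$-group is the instance $N_1^{\mathscr{D}}\leq N_3^{\mathscr{D}}$ of part~(1), and each inequality pairing an $N_2$-group with an $N_4$-group is the instance $N_2^{\mathscr{D}}\leq N_4^{\mathscr{D}}$ of part~(2), where $\mathscr{D}$ ranges over $\mathscr{C},\mathscr{C}_1,\mathscr{C}_2,\mathscr{C}_3,\mathscr{C}_4,\mathscr{C}_{\cap}$ exactly as the six items indicate.

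First I would fix the label-to-operator dictionary from Table~\ref{tab:a} and verify the pairings. For instance, in item~(4) we have $A3=N_1^{\mathscr{C}_3}$, $C=N_3^{\mathscr{C}_3}$, $A2=N_2^{\mathscr{C}_3}$ and $D=N_4^{\mathscr{C}_3}$, so the claims $A3\leq C$ and $A2\leq D$ are precisely $N_1^{\mathscr{C}_3}\leq N_3^{\mathscr{C}_3}$ and $N_2^{\mathscr{C}_3}\leq N_4^{\mathscr{C}_3}$; the remaining five items translate in the same way, with item~(6) exploiting that $A1=N_1^{\mathscr{C}_{\cap}}$ and $F1=N_3^{\mathscr{C}_{\cap}}$ when the underlying covering is $\mathscr{C}_{\cap}$.

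Next I would justify that Proposition~\ref{p:5.1} may be invoked on each derived covering rather than only on $\mathscr{C}$. Its proof rests solely on the two containments $md(\mathscr{D},x)\subseteq\mathscr{D}$ and $MD(\mathscr{D},x)\subseteq\mathscr{D}$, which hold for any fuzzy covering because $md(\mathscr{D},x),MD(\mathscr{D},x)\subseteq\mathbb{C}(\mathscr{D},x)\subseteq\mathscr{D}$. Since $\mathscr{C}_1,\mathscr{C}_2,\mathscr{C}_3,\mathscr{C}_4$ and $\mathscr{C}_{\cap}$ are themselves fuzzy coverings (established in Section~\ref{Section 3}), each possesses its own minimal and maximal descriptions, and Proposition~\ref{p:5.1} applies verbatim with $\mathscr{C}$ replaced by the relevant $\mathscr{D}$. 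The monotonicity of an $\inf$ (resp.\ $\sup$) taken over a subfamily then gives $N_1^{\mathscr{D}}\leq N_3^{\mathscr{D}}$ and $N_2^{\mathscr{D}}\leq N_4^{\mathscr{D}}$.

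The argument therefore needs no fresh estimate; the single substantive inequality is already packaged in Proposition~\ref{p:5.1}. The only real work is the bookkeeping of the first step: ensuring that in each item the smaller side is the $N_1$- or $N_2$-operator and the larger side is the matching $N_3$- or $N_4$-operator built on the \emph{same} covering. Miscounting this pairing is the one place where an error could slip in, so I would expect the proof itself to collapse to a single sentence citing Proposition~\ref{p:5.1} across the six derived coverings.
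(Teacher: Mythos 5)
Your proposal is correct and matches the paper's intended argument: the paper gives no separate proof, stating only that ``considering other derived coverings, we can get the same conclusions which are like Proposition~\ref{p:5.1},'' which is exactly your reduction of all twelve inequalities to $N_1^{\mathscr{D}}\leq N_3^{\mathscr{D}}$ and $N_2^{\mathscr{D}}\leq N_4^{\mathscr{D}}$ over the six coverings. Your label-to-operator dictionary from Table~\ref{tab:a} is accurate in every item, and your observation that the proposition's proof uses only $md(\mathscr{D},x)\subseteq\mathscr{D}$ and $MD(\mathscr{D},x)\subseteq\mathscr{D}$ (valid for any of the derived coverings, which Section~\ref{Section 3} establishes are fuzzy coverings) is precisely the justification the paper leaves implicit.
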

In \cite{d2017fuzzy}, they conclude $A1\leq A2 \leq D$ and $A1\leq C\leq D$ by their groupings. However, in this paper, because of the new group $A3$, we do not compare the group $A1$ with other groups by means of operators $N^{\mathscr{C}_3}_j$.

We can give an example to illustrate $A1$ is incomparable with $A2$ and $D$.
\begin{example}
	Let $\mathbb{U}$ be a set with $\mathbb{U}=\{x,y\}$ and $\mathscr{C}$ a fuzzy covering with $\mathscr{C}=\{K_1, K_2\}$. For covering $\mathscr{C}$ we have $K_1=\frac{1}{x}+\frac{0.905}{y}$ and $K_2=\frac{0.82}{x}+\frac{1}{y}$. Then we have $\mathscr{C}_3=\{N_1^{\mathscr{C}}(x), N_1^{\mathscr{C}}(y)\}$ with $N_1^{\mathscr{C}}(x)=\frac{1}{x}+\frac{0.95}{y}$ and $N_1^{\mathscr{C}}(y)=\frac{0.9}{x}+\frac{1}{y}$ Considering overlap function $O_2^V$ and its $R_O$-implicator $I_{O_2^V}$, we get \begin{align*}&N_1^{\mathscr{C}}(x)(y)=0.95,\\&N_2^{\mathscr{C}_3}(x)(y)=N_4^{\mathscr{C}_3}(x)(y)=0.905.\end{align*}
	Hence, in this case, the group $A1$ is incomparable with $A2$ and $D$.
\end{example}
 Now, we consider the relation between  groups $A1$ and $A3$. By means of the Example \ref{e:4.1}, we can get $N_1^{\mathscr{C}_3}\geq N_1^{\mathscr{C}}$ in that case, actually, they are incomparable.
\begin{proposition}\label{p:5.2}
	Let $\mathscr{C}$ be a fuzzy covering on $\mathbb{U}$, $O$ an overlap function and $I_O$ an $R_O$-implicator. $N_1^{\mathscr{C}}$ and $N_1^{\mathscr{C}_3}$ are two fuzzy operators defined by $I_O$. Then we have $N_1^{\mathscr{C}}$ and $N_1^{\mathscr{C}_3}$ are incomparable.
\end{proposition}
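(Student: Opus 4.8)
The plan is to argue incomparability straight from the definition of $\leq$ recalled just above: to show that neither $N_1^{\mathscr{C}}\leq N_1^{\mathscr{C}_3}$ nor $N_1^{\mathscr{C}}\geq N_1^{\mathscr{C}_3}$ can hold in general, it is enough to produce two witnesses, namely a covering and points $x,y$ with $N_1^{\mathscr{C}}(x)(y)<N_1^{\mathscr{C}_3}(x)(y)$ (which refutes $N_1^{\mathscr{C}}\geq N_1^{\mathscr{C}_3}$) together with a covering and points $x',y'$ with $N_1^{\mathscr{C}}(x')(y')>N_1^{\mathscr{C}_3}(x')(y')$ (which refutes $N_1^{\mathscr{C}}\leq N_1^{\mathscr{C}_3}$). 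Since one strict inequality at one pair of points already kills each candidate ordering, no uniform estimate is required.

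The first witness is already available: Example \ref{e:4.1}, built with the non-associative overlap function $O_2^V$ and its residual implicator $I_{O_2^V}$, gives $N_1^{\mathscr{C}}(x)(y)=0.95$ while $N_1^{\mathscr{C}_3}(x)(y)\geq 0.974$, so $N_1^{\mathscr{C}}\not\geq N_1^{\mathscr{C}_3}$. Thus only the reverse failure remains to be exhibited. For that I would construct a fresh finite fuzzy covering $\mathscr{C}$ on a two- or three-point universe, again with $O_2^V$ and $I_{O_2^V}$, and tune the membership degrees so that for some $z$ the derived set $N_1^{\mathscr{C}}(z)\in\mathscr{C}_3$ is large at $x'$ but appreciably smaller at $y'$. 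Because $N_1^{\mathscr{C}_3}(x')(y')=\inf_{z}I_O\bigl(N_1^{\mathscr{C}}(z)(x'),\,N_1^{\mathscr{C}}(z)(y')\bigr)$ is an infimum over all such derived sets, this single $z$ forces $N_1^{\mathscr{C}_3}(x')(y')$ strictly below $N_1^{\mathscr{C}}(x')(y')$, yielding $N_1^{\mathscr{C}}\not\leq N_1^{\mathscr{C}_3}$. Putting the two witnesses together gives the incomparability.

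The main obstacle is the explicit numerical bookkeeping for $O_2^V$ and $I_{O_2^V}$, whose piecewise definitions with the distinguished branch on $(0.5,1]$ make the implicator values quite sensitive to the chosen degrees. One must also keep in mind that reflexivity of $N_1^{\mathscr{C}}$ (valid here since $O$ satisfies (O7)) makes the term $z=x'$ equal to $I_O(1,N_1^{\mathscr{C}}(x')(y'))\geq N_1^{\mathscr{C}}(x')(y')$, so that term cannot pull the infimum down; the required decrease must therefore be engineered through some other index $z$. The delicate step is thus selecting membership values that force a strict drop in exactly the reverse direction while leaving the first witness untouched; once a concrete covering is fixed, the three or four implicator evaluations are routine and the conclusion follows at once.
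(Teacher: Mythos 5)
Your framing is correct and your first witness is exactly the paper's: Example \ref{e:4.1} shows $N_1^{\mathscr{C}_3}(x)(y)>N_1^{\mathscr{C}}(x)(y)$ for a particular covering, which rules out $N_1^{\mathscr{C}}\geq N_1^{\mathscr{C}_3}$. The genuine gap is that your second witness is never produced. You only promise to ``tune the membership degrees'' of a fresh covering so that some $z$ drags the infimum defining $N_1^{\mathscr{C}_3}(x')(y')$ strictly below $N_1^{\mathscr{C}}(x')(y')$; no covering, no values, no implicator evaluations are given, and this is precisely the step the paper itself flags as hard (its remark after the proposition says it is ``still difficult to find an example to verify $N_1^{\mathscr{C}}\geq N_1^{\mathscr{C}_3}$''). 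The paper sidesteps the construction entirely: it argues by contradiction, noting via the adjunction $O(x,u)\leq y \Longleftrightarrow I_O(x,y)\geq u$ that if $N_1^{\mathscr{C}}\leq N_1^{\mathscr{C}_3}$ held for all admissible coverings and overlap functions, then $N_1^{\mathscr{C}}$ would always be $O$-transitive, which Example \ref{ex3.2} refutes for a non-associative overlap function. As written, your proposal proves nothing beyond the half already contained in Example \ref{e:4.1}.

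The gap is not mere bookkeeping, because the example you need is genuinely constrained. By the same adjunction, having $N_1^{\mathscr{C}}(x')(y')>N_1^{\mathscr{C}_3}(x')(y')$ at some pair of points is \emph{equivalent} to a failure of $O$-transitivity of $N_1^{\mathscr{C}}$, and on a two-point universe this can never happen: the infimum defining $N_1^{\mathscr{C}_3}(x')(y')$ then runs only over $t\in\{x',y'\}$, the term $t=y'$ equals $I_O(N_1^{\mathscr{C}}(y')(x'),1)=1$, and the term $t=x'$ is $I_O(1,N_1^{\mathscr{C}}(x')(y'))\geq N_1^{\mathscr{C}}(x')(y')$ by the reflexivity observation you yourself make, so $N_1^{\mathscr{C}_3}\geq N_1^{\mathscr{C}}$ holds automatically. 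Hence the ``two-point'' option in your sketch provably cannot work; at least three points are required. The construction can in fact be completed, which would make your route valid (and more explicit than the paper's): take $\mathbb{U}=\{x,y,z\}$, $\mathscr{C}=\{K_0,K^{*}\}$ with $K_0\equiv 1$ and $K^{*}=\frac{0.905}{x}+\frac{0.82805}{y}+\frac{0.95}{z}$, with $O_2^V$ and $I_{O_2^V}$; then $N_1^{\mathscr{C}}(x)(y)=1$, while $N_1^{\mathscr{C}}(z)=\frac{1}{x}+\frac{0.95}{y}+\frac{1}{z}\in\mathscr{C}_3$ forces $N_1^{\mathscr{C}_3}(x)(y)\leq I_{O_2^V}(1,0.95)\approx 0.974<1$. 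Until some such explicit covering (or the paper's contradiction argument) is supplied, the proof is incomplete at its crux.
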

\begin{proof}
	In Example \ref{e:4.1}, we have the inequality $N_1^{\mathscr{C}_3}\geq N_1^{\mathscr{C}}$, now we assume $N_1^{\mathscr{C}}$ and $N_1^{\mathscr{C}_3}$ satisfy $N_1^{\mathscr{C}_3}\geq N_1^{\mathscr{C}}$ for any finite fuzzy coverings and overlap functions which satisfy (O7). So that, we can get that $$N_1^{\mathscr{C}_3}(x)(y)=\underset{K\in \mathscr{C}_3}{\inf}I_O(K(x),K(y))=\underset{z\in \mathbb{U}}{\inf}I_O(N_1^{\mathscr{C}}(z)(x),N_1^{\mathscr{C}}(z)(y))\geq N_1^{\mathscr{C}}(x)(y).$$ Hence, for any $x,y,z\in \mathbb{U}$, we have $$I_O(N_1^{\mathscr{C}}(z)(x),N_1^{\mathscr{C}}(z)(y))\geq N_1^{\mathscr{C}}(x)(y)\Rightarrow O(N_1^{\mathscr{C}}(z)(x),N_1^{\mathscr{C}}(x)(y))\leq N_1^{\mathscr{C}}(z)(y).$$ Therefore, the overlap function based fuzzy neighborhood operator $N_1^{\mathscr{C}}$ is $O$-transitive.
	
	However, by means of Example \ref{ex3.2}, when overlap function is not associative, the operator $N_1^{\mathscr{C}}$ is not $O$-transitive. The above conclusion is contradiction. So that, the assumption is not established, and the operators $N_1^{\mathscr{C}}$ and $N_1^{\mathscr{C}_3}$ are incomparable, i.e., the groups $A1$ and $A3$ are incomparable.
\end{proof}
\begin{remark}
	Notice that, in \cite{d2017fuzzy}, they get the conclusion $A1\leq C$ by their grouping. Considering the overlap function-based fuzzy neighborhood operator $N_1^{\mathscr{C}}$ in group $A1$ and the operator $N_3^{\mathscr{C}_3}$ in group $C$, when the $O$ satisfies (O6), which defines the fuzzy coverings and operators, we can obtain the same conclusion $N_1^{\mathscr{C}}\leq N_3^{\mathscr{C}_3}$. However, when $O$ is not associative, we are no sure of relation between these two operators. Considering the conclusion between $N_1^{\mathscr{C}}$ and $N_1^{\mathscr{C}_3}$, we prove that they are incomparable by means of Proposition \ref{p:5.2}. However, it is still difficult to find an example to verify $N_1^{\mathscr{C}}\geq N_1^{\mathscr{C}_3}$ in some cases. Analogously, the relation between $N_3^{\mathscr{C}_3}$ and $ N_1^{\mathscr{C}}$ is also difficult to be determined. We can find some instances to explain $N_1^{\mathscr{C}_3}=N_3^{\mathscr{C}_3}$ or $N_1^{\mathscr{C}}\leq N_3^{\mathscr{C}_3}$. But it is not easy to prove or give an example that whether the inequality $N_1^{\mathscr{C}}\leq N_3^{\mathscr{C}_3}$ is true in all cases or these two operators are incomparable. With this in mind, we cannot compare the group $A1$ and the group $C$ by default in this paper.
\end{remark}

In our paper, the inequalities $A1\leq F2$ and $L\leq H1$ are also established.
\begin{proposition}
	Let $\mathscr{C}$ be a fuzzy covering on $\mathbb{U}$, $O$ an overlap function and $I_O$ an $R_O$-implicator. $N_1^{\mathscr{C}}$, $N_1^{\mathscr{C}_2}$, $N_4^{\mathscr{C}_1}$ and $N_4^{\mathscr{C}}$ are four fuzzy neighborhood operators. Then $N_1^{\mathscr{C}}\leq N_1^{\mathscr{C}_2}$ and $N_4^{\mathscr{C}_1}\leq N_4^{\mathscr{C}}$.
\end{proposition}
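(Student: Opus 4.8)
The plan is to obtain both inequalities from one elementary principle---monotonicity of the infimum and of the supremum under set inclusion---combined with the containments between the derived coverings and $\mathscr{C}$ recorded earlier in the paper. The key preliminary fact I would invoke is that, under the finiteness assumption on $\mathscr{C}$ in force throughout this subsection, both $\mathscr{C}_1$ and $\mathscr{C}_2$ are fuzzy subcoverings of $\mathscr{C}$; that is, $\mathscr{C}_1 \subseteq \mathscr{C}$ and $\mathscr{C}_2 \subseteq \mathscr{C}$, which is exactly the content of the proposition stating that $\mathscr{C}_1$, $\mathscr{C}_2$ and $\mathscr{C}_{\cup}$ are finite fuzzy subcoverings of $\mathscr{C}$. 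These two containments are the only external ingredients needed.

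For $N_1^{\mathscr{C}} \leq N_1^{\mathscr{C}_2}$, I would fix $x, y \in \mathbb{U}$ and unfold the definition $N_1^{\mathscr{C}}(x)(y) = \inf_{K\in \mathscr{C}} I_O(K(x), K(y))$. Since the index family $\mathscr{C}_2$ is contained in $\mathscr{C}$, the infimum over the smaller family cannot drop below the infimum over the larger one, so $\inf_{K\in \mathscr{C}} I_O(K(x),K(y)) \leq \inf_{K\in \mathscr{C}_2} I_O(K(x),K(y))$, which is precisely $N_1^{\mathscr{C}}(x)(y) \leq N_1^{\mathscr{C}_2}(x)(y)$. As $x$ and $y$ are arbitrary, this yields the pointwise inequality $N_1^{\mathscr{C}} \leq N_1^{\mathscr{C}_2}$.

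For $N_4^{\mathscr{C}_1} \leq N_4^{\mathscr{C}}$, I would argue dually: unfolding $N_4^{\mathscr{C}}(x)(y) = \sup_{K\in \mathscr{C}} O(K(x), K(y))$ and using $\mathscr{C}_1 \subseteq \mathscr{C}$, the supremum taken over the subfamily $\mathscr{C}_1$ is dominated by the supremum over all of $\mathscr{C}$, giving $N_4^{\mathscr{C}_1}(x)(y) \leq N_4^{\mathscr{C}}(x)(y)$ for every $x, y \in \mathbb{U}$, hence $N_4^{\mathscr{C}_1} \leq N_4^{\mathscr{C}}$.

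I do not anticipate a genuine obstacle here, since the whole argument is nothing more than the order-reversing (resp.\ order-preserving) behaviour of $\inf$ (resp.\ $\sup$) under shrinking the index family; notably, no property of the overlap function $O$ beyond those already encoded in $O$ and $I_O$ is required, and in particular associativity (O6) plays no role. The only point requiring care is to cite the correct subcovering statement---together with its finiteness hypothesis---so that $\mathscr{C}_1$ and $\mathscr{C}_2$ are legitimately subsets of $\mathscr{C}$; with that in hand, both inequalities are immediate.
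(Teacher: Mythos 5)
Your proposal is correct and is essentially identical to the paper's own proof, which simply observes that the claims ``follow immediately from $\mathscr{C}_2 \subseteq \mathscr{C}$ and $\mathscr{C}_1 \subseteq \mathscr{C}$''; you merely spell out the underlying monotonicity of $\inf$ and $\sup$ under shrinking the index family. Your care in citing the subcovering proposition (with its finiteness hypothesis) is a sensible touch but does not change the argument.
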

\begin{proof}
	This follows immediately from $\mathscr{C}_2 \subseteq \mathscr{C}$ and $\mathscr{C}_1 \subseteq \mathscr{C}$.
\end{proof}
Furthermore, since, when overlap function satisfies (O6), it can be seen as a t-norm, we can obtain $F1\leq G\leq H1 \leq I$. But when this condition does not hold, we will get some different conclusions.
\begin{example}
		Let $\mathbb{U}$ be a set with $\mathbb{U}=\{x,y\}$ and $\mathscr{C}$ a fuzzy covering with $\mathscr{C}=\{K_1, K_2\}$. For covering $\mathscr{C}$ we have $K_1=\frac{1}{x}+\frac{0.82}{y}$ and $K_2=\frac{0.82}{x}+\frac{1}{y}$. Then, we have $$MD(\mathscr{C},x)=md({\mathscr{C},x})=\mathscr{C}=\{K_1, K_2\}$$ and $$\mathscr{C}_4= MD(\mathscr{C}_4,x)=md({\mathscr{C}_4,x})=\{N_4^{\mathscr{C}}(x), N_4^{\mathscr{C}}(y)\}$$ with $N_4^{\mathscr{C}}(x)=\frac{1}{x}+\frac{0.7048}{y}$ and $N_4^{\mathscr{C}}(y)=\frac{0.7048}{x}+\frac{1}{y}$. Considering overlap function $O_2^V$ and its $R_O$-implicator $I_{O_2^V}$, we have
		\begin{align*}
			&N_3^{\mathscr{C}}(x)(y)=0.9, \notag \\
			&N_1^{\mathscr{C}_4}(x)(y)=0.82, \notag \\
			&N_4^{\mathscr{C}}(x)(y)=0.7048, \notag \\
			&N_2^{\mathscr{C}_4}(x)(y)=0.58388608. \notag
		\end{align*}
	Hence, in this case, the groups $F1$, $G$, $H1$ and $I$ are incomparable.
\end{example}
In \cite{d2017fuzzy}, the relation $D \leq L$ is established, so that when overlap function satisfies (O6), we can also get this conclusion. And now, we consider the other case.
\begin{example}
	Let $\mathbb{U}$ be a set with $\mathbb{U}=\{x,y\}$ and $\mathscr{C}$ a fuzzy covering with $\mathscr{C}=\{K_1, K_2\}$. For $\mathscr{C}$ we have $K_1=\frac{1}{x}+\frac{0.905}{y}$ and $K_2=\frac{0.82}{x}+\frac{1}{y}$. Considering overlap function $O_2^V$ and its $R_O$-implicator $I_{O_2^V}$, and we have $\mathscr{C}_1=\mathscr{C}=\{K_1, K_2\}$ and $\mathscr{C}_3= \{N_4^{\mathscr{C}}(x), N_4^{\mathscr{C}}(y)\}$ with $N_1^{\mathscr{C}}(x)=\frac{1}{x}+\frac{0.95}{y}$ and $N_1^{\mathscr{C}}(y)=\frac{0.9}{x}+\frac{1}{y}$. Then, we have $$N_4^{\mathscr{C}_3}(x)(y)=0.905>N_4^{\mathscr{C}_1}(x)(y)=0.82805.$$
	Hence, in this case, the groups $D$ and $L$ are incomparable.
\end{example}
To the end, we discuss that the relation, $M \leq H2$, holds in all conditions.
\begin{proposition}
	Let $\mathscr{C}$ be a fuzzy covering on $\mathbb{U}$ and $O$ an overlap function. $N_2^{\mathscr{C}_{\cap}}$ and $N_2^{\mathscr{C}_2}$ are two fuzzy neighborhood operators. Then $N_2^{\mathscr{C}_{\cap}} \leq N_2^{\mathscr{C}_2}$.
\end{proposition}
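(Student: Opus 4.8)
The plan is to fix $x,y\in\mathbb{U}$ and compare the two suprema directly, since by definition $N_2^{\mathscr{C}_{\cap}}(x)(y)=\sup_{K\in md(\mathscr{C}_{\cap},x)}O(K(x),K(y))$ and $N_2^{\mathscr{C}_2}(x)(y)=\sup_{K\in md(\mathscr{C}_2,x)}O(K(x),K(y))$. I would use two structural inputs already available: Proposition \ref{p:3.7}, which gives the inclusion $\mathscr{C}_2\subseteq\mathscr{C}_{\cap}$ (hence $\mathbb{C}(\mathscr{C}_2,x)\subseteq\mathbb{C}(\mathscr{C}_{\cap},x)$), and parts (2), (3) of Proposition \ref{p:3.9}, which together give $MD(\mathscr{C}_2,x)=MD(\mathscr{C}_{\cap},x)=MD(\mathscr{C},x)$. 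Monotonicity (O4) and commutativity (O1) of $O$ then do the arithmetic.

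The core is to show that every term $O(K(x),K(y))$ with $K\in md(\mathscr{C}_{\cap},x)$ is bounded above by a term coming from $md(\mathscr{C}_2,x)$. First I would dispose of the easy case: if $K\in\mathscr{C}_2$, then because $K$ is $\subseteq$-minimal among the value-$K(x)$ sets of the larger system $\mathbb{C}(\mathscr{C}_{\cap},x)$ and $\mathbb{C}(\mathscr{C}_2,x)$ is a subfamily of it, $K$ stays minimal there, so $K\in md(\mathscr{C}_2,x)$ and its term already appears on the right. For the remaining case $K\in\mathscr{C}_{\cap}\setminus\mathscr{C}_2$, I would pass to a maximal description: by Lemma \ref{2.1} there is $K_2\in MD(\mathscr{C}_{\cap},x)=MD(\mathscr{C},x)\subseteq\mathscr{C}_2$ with $K_2(x)=K(x)$ and $K\subseteq K_2$. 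Since $K(y)\le K_2(y)$ and $K(x)=K_2(x)$, monotonicity of $O$ gives $O(K(x),K(y))\le O(K_2(x),K_2(y))$, transferring the estimate onto a genuine member of $\mathscr{C}_2$.

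The main obstacle is the final link: I still need $O(K_2(x),K_2(y))\le N_2^{\mathscr{C}_2}(x)(y)$, that is, that this value is captured by the \emph{minimal} descriptions of $\mathscr{C}_2$, whereas $K_2$ is a \emph{maximal} description. If $K_2$ happens to be $\subseteq$-minimal among the value-$K_2(x)$ sets of $\mathbb{C}(\mathscr{C}_2,x)$ we are done, so the delicate situation is when some $S\in\mathscr{C}_2$ satisfies $S\subsetneq K_2$ and $S(x)=K_2(x)$; then $N_2^{\mathscr{C}_2}(x)(y)$ is a supremum over sets no larger than $S$, and the crude bound through $K_2$ need not suffice by itself. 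Resolving this is where the fine structure of the derived coverings must be used: every element of $\mathscr{C}_2$ is a maximal description of some point, and I would argue that such a strictly smaller $S$ cannot undercut the contribution of $K$ without forcing a set strictly below $K$ into the neighborhood system of $x$, contradicting $K\in md(\mathscr{C}_{\cap},x)$. The genuine subtlety is that $\mathscr{C}_{\cap}$ is not closed under intersection, so the natural witness $K\cap S$ may fall outside $\mathscr{C}_{\cap}$; I would therefore route the comparison through the maximal descriptions supplied by Proposition \ref{p:3.9} rather than through set intersections, and I expect this step to carry the whole argument.
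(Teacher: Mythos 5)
Your first case is exactly the paper's opening step: the argument that any $K\in md(\mathscr{C}_{\cap},x)\cap\mathscr{C}_2$ stays minimal in $\mathscr{C}_2$ (via $\mathscr{C}_2\subseteq\mathscr{C}_{\cap}$, Proposition \ref{p:3.7}) is how the paper begins, and that part is fine. The gap is in your second case, and it is fatal to your route: the step you defer --- bounding $O(K_2(x),K_2(y))$ by $N_2^{\mathscr{C}_2}(x)(y)$ after climbing to $K_2\in MD(\mathscr{C},x)\subseteq\mathscr{C}_2$ --- is not just unproved, it is false in general, so no argument can carry it. The contradiction you hope for does not exist: if $S\in md(\mathscr{C}_2,x)$ with $S\subsetneq K_2$ and $S(x)=K_2(x)$, then $S$ lies below $K_2$, not below $K$, so minimality of $K$ in $\mathbb{C}(\mathscr{C}_{\cap},x)$ says nothing about it; typically $S$ exceeds $K$ at a point where $K$ vanishes while being far below $K$ at $y$, and monotonicity only gives $O(S(x),S(y))\le O(K_2(x),K_2(y))$, the wrong direction. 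Concretely, on $\mathbb{U}=\{x,y,z\}$ take $O$ to be the product and $\mathscr{C}$ to consist of (membership values listed at $x,y,z$) $K_0=(1,1,1)$, $K=(0.9,0.9,0)$, $A=(0.9,0.95,0.5)$, $B=(0.95,0.9,0.5)$, $C=(0.9,0.1,0.5)$, $D=(1,0.1,0.1)$, $E=(0.95,0.1,0.4)$. Here $A$ and $B$ eject $K$ from $MD(\mathscr{C},x)$ and $MD(\mathscr{C},y)$, and $K\notin\mathbb{C}(\mathscr{C},z)$, so $\mathscr{C}_2=\{K_0,A,B,C,D,E\}$; yet $K$ is not an intersection of other members (all of them are positive at $z$), so $K\in\mathscr{C}_{\cap}$, and in fact $K\in md(\mathscr{C}_{\cap},x)$ since neither $A$ nor $C$ is contained in $K$. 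But $md(\mathscr{C}_2,x)=\{C,D,E\}$, all with value $0.1$ at $y$, so $N_2^{\mathscr{C}_{\cap}}(x)(y)\ge O(0.9,0.9)=0.81>0.1=N_2^{\mathscr{C}_2}(x)(y)$.

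The idea you are missing is that the paper never tries to bound the contribution of a $K\in md(\mathscr{C}_{\cap},x)\setminus\mathscr{C}_2$; it argues that the supremum-attaining set cannot lie outside $\mathscr{C}_2$ at all. Taking $K_1\in md(\mathscr{C}_{\cap},x)$ with $N_2^{\mathscr{C}_{\cap}}(x)(y)=O(K_1(x),K_1(y))$ (finiteness), suppose $K_1\notin\mathscr{C}_2$; then for each $z$, Lemma \ref{2.1} yields $K_2^z\in MD(\mathscr{C},z)\subseteq\mathscr{C}_2$ with $K_2^z(z)=K_1(z)$ and $K_1\subsetneq K_2^z$, whence $K_1=\bigcap_{z}K_2^z$ is an intersection of members of $\mathscr{C}\setminus\{K_1\}$, contradicting $K_1\in\mathscr{C}_{\cap}$; so $K_1\in\mathscr{C}_2$, and your first case finishes the proof. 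Note, however, that this argument silently requires $K_1(z)>0$ for every $z$: Lemma \ref{2.1} only operates inside $\mathbb{C}(\mathscr{C},z)$, and at zeros of $K_1$ the identity $K_1=\bigcap_z K_2^z$ breaks down --- precisely the loophole my example exploits (there $K(z)=0$). So your unease was well founded: the inequality itself can fail when covering members take the value $0$, and the only sound completion --- the paper's vacuity argument under implicitly positive membership degrees --- is structurally different from the maximal-description detour you propose.
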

\begin{proof}
	Let $x\in \mathbb{U}$. Then we first consider how to prove $md(\mathscr{C}_{\cap},x)\bigcap
	\mathscr{C}_2 \subseteq md(\mathscr{C}_2,x)$. Let $K$ be a fuzzy set in $md(\mathscr{C}_{\cap},x) \bigcap \mathscr{C}_2$, and we take $K'\in \mathscr{C}_2$ with $K'(x)=K(x)$ and $K'\subseteq K$. Since proposition \ref{p:3.7}, we also have $K'\in \mathscr{C}_{\cap}$. As this relation and $K\in md(\mathscr{C}_{\cap},x)$, we get $K=K'$ and thus, $K\in md(\mathscr{C}_2,x).$
	
	Let $y\in \mathbb{U}$ and we assume that $N_2^{\mathscr{C}_{\cap}}=O(K_1(x),K_1(y))$ for $K_1\in md(\mathscr{C}_\cap,x)$. If $K_1\notin \mathscr{C}_2$, then for each $z\in \mathbb{U}$, we can find a $K_2\in \mathscr{C}_2$ with $K_1(z)=K_2(z)$, and $K_1\subsetneq K_2.$ Considering the definition of $\mathscr{C}_{\cap}$, above assumption means that $K_1=\underset{z\in \mathbb{U}}{\bigcap}K_2$. Since $K_1\in \mathscr{C}_\cap$, this is a contradiction. Hence, $K_1$ will be a set in $\mathscr{C}_2$ , so that, we can obtain $K_1\in md(\mathscr{C}_2,x)$, and thus, we get that $N_2^{\mathscr{C}_{\cap}}(x)(y) \leq N_2^{\mathscr{C}_2}(x)(y)$.
\end{proof}
	
In \cite{d2017fuzzy}, the operator $\mathbb{N}_1^{\mathscr{C}_3}$ is less than others. In our case, there are no other comparable operators, i.e., with respect to the partial order relation $\leq $, the overlap function based operator $N_1^{\mathscr{C}_3}$ is incomparable with other operators which are in the groups $B$, $D$, $E$, $ F1$, $F2$, $G$, $H1$, $H2$, $I$, $J$, $K$, $L$ and $M$. We will give some examples to illustrate they are incomparable as follow.
	
	 \begin{example}
	 Let $\mathbb{U}$ be a set with $\mathbb{U}=\{x,y\}$ and $\mathscr{C}$ a fuzzy covering with $\mathscr{C}=\{K_1, K_2\}$. For covering $\mathscr{C}$ we have $K_1=\frac{1}{x}+\frac{0.905}{y}$ and $K_2=\frac{0.905}{x}+\frac{1}{y}$. Considering overlap function $O_2^V$ and its $R_O$-implicator $I_{O_2^V}$, and we have $\mathscr{C}=\mathscr{C}_1=\mathscr{C}_2=\mathscr{C}_{\cap}$, $\mathscr{C}_3=\{N_1^{\mathscr{C}}(x), N_1^{\mathscr{C}}(y)\}$ with $N_1^{\mathscr{C}}(x)=\frac{1}{x}+\frac{0.95}{y}$ and $N_1^{\mathscr{C}}(y)=\frac{0.95}{x}+\frac{1}{y}$ and $\mathscr{C}_4=\{N_4^{\mathscr{C}}(x), N_4^{\mathscr{C}}(y)\}$ with $N_4^{\mathscr{C}}(x)=\frac{1}{x}+\frac{0.82805}{y}$ and $N_4^{\mathscr{C}}(y)=\frac{0.82805}{x}+\frac{1}{y}$. Then we have \begin{align*}&N_1^{\mathscr{C}_3}(x)(y)\geq 0.974, ~(A3) \\&N_3^{\mathscr{C}_1}(x)(y)=N_3^{\mathscr{C}}(x)(y)=N_1^{\mathscr{C}_2}(x)(y)=0.95, ~(B, F1, F2) \\&N_4^{\mathscr{C}_3}(x)(y)=N_1^{\mathscr{C}_4}(x)(y)=N_3^{\mathscr{C}_4}(x)(y)=0.905, ~(D, G, J) \\&N_4^{\mathscr{C}_1}(x)(y)=N_2^{\mathscr{C}}(x)(y)=N_4^{\mathscr{C}}(x)(y)=N_2^{\mathscr{C}_2}(x)(y)=N_2^{\mathscr{C}_{\cap}}(x)(y)=0.82805, ~(E, H1, H2, L, M)
	\\& N_4^{\mathscr{C}_4}(x)(y)=N_2^{\mathscr{C}_4}(x)(y)=0.715233605. ~(I, K)\end{align*}
	 i.e., the group $A3$ is incomparable with them.
	 \end{example}
 	At end of this subsection, we illustrate the partial order relation $\leq $ among groups which are in Table \ref{tab:a} by Figure ~\ref{fig:a}
 	\begin{figure}[htbp]
 		\centering
 		\includegraphics[width=14cm]{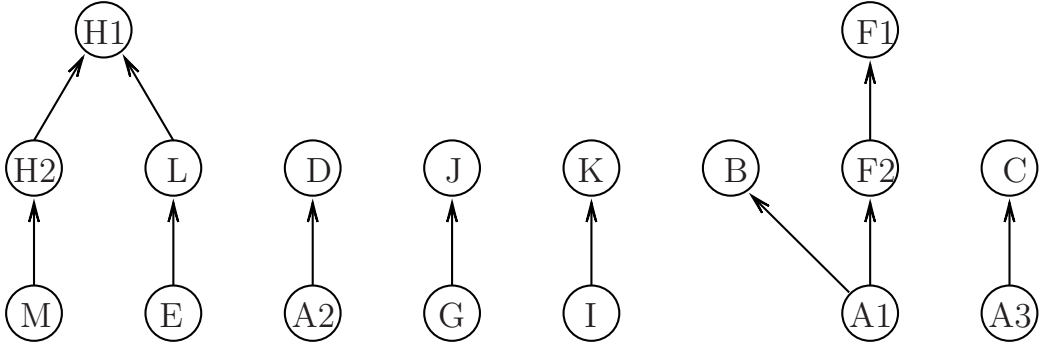}\\
 		\caption{Lattice of the overlap function based fuzzy neighborhood operators from Table ~\ref{tab:a}.}\label{fig:a}
 	\end{figure}
 \subsection{Comparison and analysis}\label{Section 7}
  Considering the partial order relation $\leq $ among fuzzy neighborhood operators, which can be defined as follows: $N\leq N^{'}$ if and only if $\forall x,y \in \mathbb{U}:N(x)(y)\leq N^{'}(x)(y)$. D'eer et al. have compared each fuzzy neighborhood operator with others in \cite{d2017fuzzy}, and divided them into 16 different groups. Meanwhile they have drawn a Hasse diagram with respect to the lattice of groups as shown in Figure~\ref{fig:before}.

 \begin{figure}[htpb]
 \caption{Lattice of the fuzzy neighborhood operators\cite{d2017fuzzy}}\label{fig:before}
 	\centering
 	\includegraphics[width=8cm]{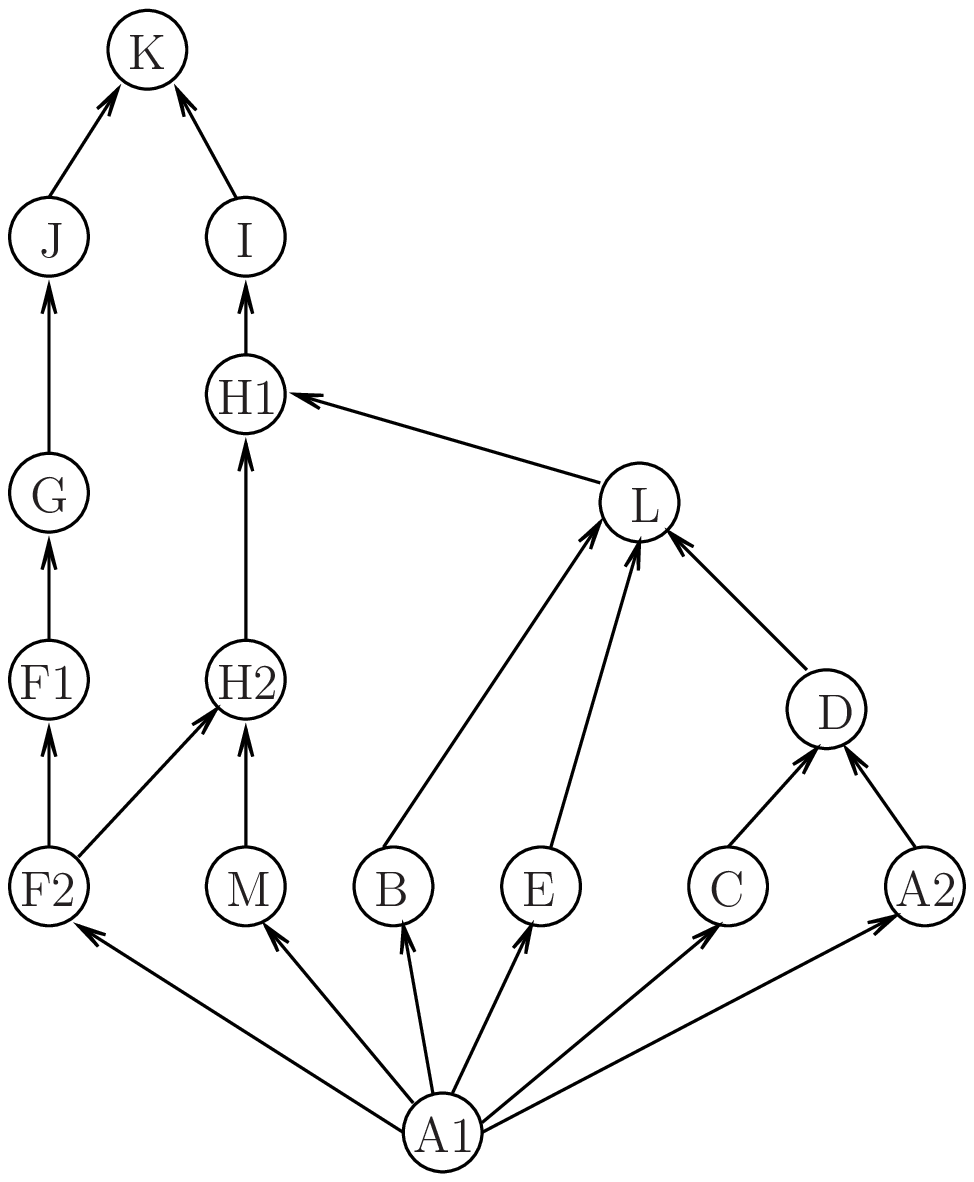}\\
 	
 \begin{table}[H]
 \caption{t-norm based fuzzy neighborhood operators based on fuzzy coverings\cite{d2017fuzzy}}\label{tab:b}
	\begin{center}
		\begin{tabular}{l|l|l|l}
			\hline
			Group & Operators  & Group & Operators \\
			\hline
			\rule{0pt}{12pt}
			$A1$.  & $\mathbb{N}_{1}^{\mathscr{C}}$, $\mathbb{N}_{1}^{\mathscr{C}_{1}}$, $\mathbb{N}_{1}^{\mathbf{C}_{3}}$, $\mathbb{N}_{1}^{\mathscr{C}_{\cap}}$ & $G$. & $\mathbb{N}_{1}^{\mathbf{C}_{4}}$\\
			\rule{0pt}{16pt}
			$A2$.& $\mathbb{N}_{2}^{\mathbf{C}_{3}}$ & $H1$. & $\mathbb{N}_{4}^{\mathscr{C}}$, $\mathbb{N}_{4}^{\mathscr{C}_{2}}$, $\mathbb{N}_{4}^{\mathscr{C}_{\cap}}$\\
			\rule{0pt}{16pt}
			$B$. & $\mathbb{N}_{3}^{\mathscr{C}_{1}}$ &$ H2$. & $\mathbb{N}_{2}^{\mathscr{C}_{2}}$\\
			\rule{0pt}{16pt}
			$C$. & $\mathbb{N}_{3}^{\mathbf{C}_{3}}$ & $I$. & $\mathbb{N}_{2}^{\mathbf{C}_{4}}$\\
			\rule{0pt}{16pt}
			$D$. & $\mathbb{N}_{4}^{\mathbf{C}_{3}}$ & $J$. & $\mathbb{N}_{3}^{\mathbf{C}_{4}}$\\
			\rule{0pt}{16pt}
			$E$. & $\mathbb{N}_{2}^{\mathscr{C}}$, $\mathbb{N}_{2}^{\mathscr{C}_{1}}$ & $K$. & $\mathbb{N}_{4}^{\mathbf{C}_{4}}$ \\
			\rule{0pt}{16pt}
			$F1$. & $\mathbb{N}_{3}^{\mathscr{C}}$, $\mathbb{N}_{3}^{\mathscr{C}_{2}}$, $\mathbb{N}_{3}^{\mathscr{C}_{\cap}}$ & $L$. & $\mathbb{N}_{4}^{\mathscr{C}_{1}}$ \\
			\rule{0pt}{16pt}
			$F2$. & $\mathbb{N}_{1}^{\mathscr{C}_2}$ &$ M.$ & $\mathbb{N}_{2}^{\mathscr{C}_{\cap}}$\\
			\hline
		\end{tabular}
	\end{center}
	
\end{table}
 \end{figure}

Firstly, we compare the groupings between Table \ref{tab:a} and Table \ref{tab:b}. The Table \ref{tab:a} is one group more than the Table \ref{tab:b}, which is $A3$. Besides, the operator $N_{1}^{\mathscr{C}_3}$ in $A3$ corresponds to $\mathbb{N}_{1}^{\mathbf{C}_3}$ in Table \ref{tab:b} which is grouped into $A1$. And then, when we compare Figure \ref{fig:a} with Figure \ref{fig:before}, we find that there are more incomparable relationships among the groups in Figure \ref{fig:a} than the groups in Figure \ref{fig:before}.

The main reason causes these two differences is that general overlap functions do not satisfy the exchange principle. Many proofs, such as the relation between $N_1^{\mathscr{C}}$ and $N_2^{\mathscr{C}_3}$, will obtain different conclusions if the constructing functions satisfy the exchange principle. Moreover, because this principle does not establish, the operators $N_1^{\mathscr{C}_{i}}$ and $N_3^{\mathscr{C}_{i}}$ are not $O$-transitive, which is also a significant property of the partial order relation proofs. Hence the relationships such as $N_1^{\mathscr{C}}$ and $N_1^{\mathscr{C}_3}$, can be explained they are incomparable. Furthermore, the unique boundary condition of overlap functions is an important factor that causes the differences between the $t$-norm-based fuzzy neighborhood operators and overlap function-based ones as well. Although the condition (O6) does not satisfy, some relations among operators are also the same as the conclusions of their corresponding operators in \cite{d2017fuzzy} when $O(x,1)=x$, such as the relation between the operators $N_{4}^{\mathscr{C}_1}$ and $N_4^{\mathscr{C}_3}$. However, when the constructing function satisfies $O(x,1) < x$, these two operators are incomparable. In other words, the conclusions under the condition $O(x,1)=1$ are not general.

By means of the above analyses, we find that both the exchange principle and the boundary condition of overlap functions cause the differences from the conclusions of \cite{d2017fuzzy}. These two conditions bring about overlap function-based fuzzy neighborhood operators that are more complicated, but more actual in applications. For example, in the real world, the exchange principle probably is not satisfied during the image processing, and the operators which are based on overlap functions are more suitable in this case.
\section{Some neighborhood-related fuzzy covering-based rough set models}\label{5c}
In this section,  we assume $\mathscr{C}$ to be a finite fuzzy covering, $O$ an overlap function, it satisfies (O7), which is used to construct the fuzzy covering $\mathscr{C}_4$ and the operators $N_2^{\mathscr{C}_j}$ and $N_4^{\mathscr{C}_j}$, and $I_O$ its $R_O$-implicator that is used to define covering $\mathscr{C}_3$, and the operators $N_1^{\mathscr{C}_j}$ and $N_3^{\mathscr{C}_j}$, which are overlap function-based neighborhood-related fuzzy covering-based rough set (ONRFRS) models and $t$-norm-based neighborhood-related fuzzy covering-based rough set (TNRFRS) models, respectively. By introducing the above concepts, two kinds of fuzzy covering-based rough set models with different bases will be defined. In addition, divided into two groups according to the construction basis of fuzzy neighborhood operators and $R$-implicators, we sorted out the relationship among different pairs of approximation operators.

Considering some properties of fuzzy covering-based rough set models, which are studied by Ma in \cite{ma2016two}, the approximation spaces of two diverse groups of neighborhood operators are established are follow. For convenience, the member of any four original overlap function-based neighborhood operators and their derived operators can be written uniformly as $N_i^{\mathscr{C}_{j}}$, where $i=1,2,3,4$ and $j=0,1,2,3,4,5$, noteworthy, $N_i^{\mathscr{C}}$ and $N_i^{\mathscr{C}_{\cap}}$ are equivalently denoted as $N_i^{\mathscr{C}_{0}}$ and $N_i^{\mathscr{C}_{5}}$, respectively. Analogously, the member of $t$-norm-based neighborhood operators is also written uniformly are $\mathbb{N}_i^{\mathbf{C}_j}$, where $i=1,2,3,4$ and $j=0,1,2,3,4,5$. In summary, we define the following two groups of fuzzy rough set models.

\begin{definition}\label{D5.1}
Let $N_i^{\mathscr{C}_{j}}$ be one of overlap function-based fuzzy neighborhood operators, $\mathbb{N}_i^{\mathbf{C}_j}$ one of $t$-norm based fuzzy neighborhood operators. For each $X\in \mathscr{F}(\mathbb{U})$, the overlap function based lower approximation and upper approximation of $X$, $(\underline{\mathfrak{C}}_{i,j}(X)$, $\overline{\mathfrak{C}}_{i,j}(X))$, and the $t$-norm based lower approximation and upper approximation of $X$, $(\underline{\mathcal{C}}_{i,j}(X)$, $\overline{\mathcal{C}}_{i,j}(X))$, are respectively defined as follow:
\begin{align*}
&\underline{\mathfrak{C}}_{i,j}(X)(x)=\wedge_{y\in \mathbb{U}}[(1-N_i^{\mathscr{C}_{j}}(x)(y))\vee X(y)], x\in \mathbb{U},
\\&\overline{\mathfrak{C}}_{i,j}(X)(x)=\vee_{y\in \mathbb{U}}[N_i^{\mathscr{C}_{j}}(x)(y)\wedge X(y)], x\in \mathbb{U},
\\&\underline{\mathcal{C}}_{i,j}(X)(x)=\wedge_{y\in \mathbb{U}}[(1-\mathbb{N}_i^{\mathbf{C}_{j}}(x)(y))\vee X(y)], x\in \mathbb{U},
\\&\overline{\mathcal{C}}_{i,j}(X)(x)=\vee_{y\in \mathbb{U}}[\mathbb{N}_i^{\mathbf{C}_{j}}(x)(y)\wedge X(y)], x\in \mathbb{U}.
\end{align*}

Where $i=1,2,3,4$ and $j=0,1,2,3,4,5$.
\begin{example}
Let $\mathbb{U}=\{x_1,x_2,x_3,x_4,x_5,x_6\}$ and $\mathscr{C}$ a fuzzy covering with $\mathscr{C}=\{K_1,K_2,K_3,K_4\}$, where \begin{align*}&K_1=\frac{1}{x_1}+\frac{0.4}{x_2}+\frac{0.7}{x_3}+\frac{0.3}{x_4}+\frac{0.6}{x_5}+\frac{1}{x_6},\\&K_2=\frac{0.4}{x_1}+\frac{1}{x_2}+\frac{0.1}{x_3}+\frac{1}{x_4}+\frac{1}{x_5}+\frac{0.5}{x_6},
\\&K_3=\frac{0.8}{x_1}+\frac{0.2}{x_2}+\frac{1}{x_3}+\frac{0.5}{x_4}+\frac{1}{x_5}+\frac{1}{x_6},\\&K_4=\frac{0.1}{x_1}+\frac{0.7}{x_2}+\frac{0.4}{x_3}+\frac{1}{x_4}+\frac{1}{x_5}+\frac{1}{x_6}.\end{align*}
Considering the overlap function $O_2^V$ and its implicator $I_{O_2^V}$ which are used to establish the derived coverings and operators $N_i^{\mathscr{C}_j}$. Let $i=4$ and $j=0$. Then the neighborhood operators, $N_4^{\mathscr{C}_0}(x_t)$, of $x_t$ $(t=1,2,3,4,5,6)$ can be listed as below:
\begin{align*}&N_4^{\mathscr{C}_0}(x_1)=\frac{1}{x_1}+\frac{0.4}{x_2}+\frac{0.68}{x_3}+\frac{0.5}{x_4}+\frac{0.68}{x_5}+\frac{1}{x_6},\\&N_4^{\mathscr{C}_0}(x_2)=\frac{0.4}{x_1}+\frac{1}{x_2}+\frac{0.4}{x_3}+\frac{1}{x_4}+\frac{1}{x_5}+\frac{0.58}{x_6},
\\&N_4^{\mathscr{C}_0}(x_3)=\frac{0.68}{x_1}+\frac{0.4}{x_2}+\frac{1}{x_3}+\frac{0.5}{x_4}+\frac{1}{x_5}+\frac{1}{x_6},
\\&N_4^{\mathscr{C}_0}(x_4)=\frac{0.5}{x_1}+\frac{1}{x_2}+\frac{0.5}{x_3}+\frac{1}{x_4}+\frac{1}{x_5}+\frac{1}{x_6},
\\&N_4^{\mathscr{C}_0}(x_5)=\frac{0.68}{x_1}+\frac{1}{x_2}+\frac{1}{x_3}+\frac{1}{x_4}+\frac{1}{x_5}+\frac{1}{x_6},
\\&N_4^{\mathscr{C}_0}(x_6)=\frac{1}{x_1}+\frac{0.58}{x_2}+\frac{1}{x_3}+\frac{1}{x_4}+\frac{1}{x_5}+\frac{1}{x_6}.\end{align*}
For $X(x_t)=\frac{0.3}{x_1}+\frac{0.4}{x_2}+\frac{0.7}{x_3}+\frac{0.5}{x_4}+\frac{0.6}{x_5}+\frac{0.1}{x_6},$ we have that \begin{align*}&\underline{\mathfrak{C}}_{4,0}(X)=\frac{0.1}{x_1}+\frac{0.4}{x_2}+\frac{0.1}{x_3}+\frac{0.1}{x_4}+\frac{0.1}{x_5}+\frac{0.1}{x_6},\\&\overline{\mathfrak{C}}_{4,0}(X)=\frac{0.68}{x_1}+\frac{0.6}{x_2}+\frac{0.7}{x_3}+\frac{0.6}{x_4}+\frac{0.7}{x_5}+\frac{0.7}{x_6}.\end{align*}
\end{example}

By means of the definitions of approximation operators and the groupings of neighborhood operators from the above, we can further group the newly defined forty-eight fuzzy covering-based approximation operators. Obviously, we can divide them into seventeen pairs of overlap function-based approximation operators and sixteen pairs of $t$-norm-based ones, and the classifications are illustrated in Table~\ref{t3} and Table~\ref{t4}.
\begin{table}[H]
	\begin{center}
\caption{{Overlap function-based neighborhood-related fuzzy covering-based rough set models.}}\label{t3}

\resizebox{.95\columnwidth}{!}{
		\begin{tabular}{l|l|l|l}
			\hline
			Classification & Operators  & Classification & Operators \\
			\hline
			\rule{0pt}{18pt}
			$(\underline{\mathfrak{C}}_{A1}(X)$, $\overline{\mathfrak{C}}_{A1}(X))$  & \makecell[l]{$(\underline{\mathfrak{C}}_{1,0}(X)$, $\overline{\mathfrak{C}}_{1,0}(X))$, $(\underline{\mathfrak{C}}_{1,1}(X)$, $\overline{\mathfrak{C}}_{1,1}(X))$,\\  $(\underline{\mathfrak{C}}_{1,5}(X)$, $\overline{\mathfrak{C}}_{1,5}(X))$ } & $(\underline{\mathfrak{C}}_{G}(X)$, $\overline{\mathfrak{C}}_{G}(X))$ & $(\underline{\mathfrak{C}}_{1,4}(X)$, $\overline{\mathfrak{C}}_{1,4}(X))$\\
			\rule{0pt}{18pt}
			$(\underline{\mathfrak{C}}_{A2}(X)$, $\overline{\mathfrak{C}}_{A2}(X))$& $(\underline{\mathfrak{C}}_{2,3}(X)$, $\overline{\mathfrak{C}}_{2,3}(X))$ & $(\underline{\mathfrak{C}}_{H1}(X)$, $\overline{\mathfrak{C}}_{H1}(X))$ & \makecell[l]{$(\underline{\mathfrak{C}}_{4,0}(X)$, $\overline{\mathfrak{C}}_{4,0}(X))$, $(\underline{\mathfrak{C}}_{4,2}(X)$, $\overline{\mathfrak{C}}_{4,2}(X))$, \\ $(\underline{\mathfrak{C}}_{4,5}(X)$, $\overline{\mathfrak{C}}_{4,5}(X))$} \\
			\rule{0pt}{18pt}
			$(\underline{\mathfrak{C}}_{A3}(X)$, $\overline{\mathfrak{C}}_{A3}(X))$ & $(\underline{\mathfrak{C}}_{1,3}(X)$, $\overline{\mathfrak{C}}_{1,3}(X))$ & $(\underline{\mathfrak{C}}_{H2}(X)$, $\overline{\mathfrak{C}}_{H2}(X))$ & $(\underline{\mathfrak{C}}_{2,2}(X)$, $\overline{\mathfrak{C}}_{2,2}(X))$\\
			\rule{0pt}{18pt}
			$(\underline{\mathfrak{C}}_{B}(X)$, $\overline{\mathfrak{C}}_{B}(X))$ & $(\underline{\mathfrak{C}}_{3,1}(X)$, $\overline{\mathfrak{C}}_{3,1}(X))$ & $(\underline{\mathfrak{C}}_{I}(X)$, $\overline{\mathfrak{C}}_{I}(X))$ & $(\underline{\mathfrak{C}}_{2,4}(X)$, $\overline{\mathfrak{C}}_{2,4}(X))$\\
			\rule{0pt}{18pt}
			$(\underline{\mathfrak{C}}_{C}(X)$, $\overline{\mathfrak{C}}_{C}(X))$ & $(\underline{\mathfrak{C}}_{3,3}(X)$, $\overline{\mathfrak{C}}_{3,3}(X))$ & $(\underline{\mathfrak{C}}_{J}(X)$, $\overline{\mathfrak{C}}_{J}(X))$ & $(\underline{\mathfrak{C}}_{3,4}(X)$, $\overline{\mathfrak{C}}_{3,4}(X))$\\
			\rule{0pt}{18pt}
			$(\underline{\mathfrak{C}}_{D}(X)$, $\overline{\mathfrak{C}}_{D}(X))$ & $(\underline{\mathfrak{C}}_{4,3}(X)$, $\overline{\mathfrak{C}}_{4,3}(X))$ & $(\underline{\mathfrak{C}}_{K}(X)$, $\overline{\mathfrak{C}}_{K}(X))$ & $(\underline{\mathfrak{C}}_{4,4}(X)$, $\overline{\mathfrak{C}}_{4,4}(X))$ \\
			\rule{0pt}{18pt}
			$(\underline{\mathfrak{C}}_{E}(X)$, $\overline{\mathfrak{C}}_{E}(X))$ & $(\underline{\mathfrak{C}}_{2,0}(X)$, $\overline{\mathfrak{C}}_{2,0}(X))$, $(\underline{\mathfrak{C}}_{2,1}(X)$, $\overline{\mathfrak{C}}_{2,1}(X))$ & $(\underline{\mathfrak{C}}_{L}(X)$, $\overline{\mathfrak{C}}_{L}(X))$ & $(\underline{\mathfrak{C}}_{4,1}(X)$, $\overline{\mathfrak{C}}_{4,1}(X))$ \\
			\rule{0pt}{18pt}
			$(\underline{\mathfrak{C}}_{F1}(X)$, $\overline{\mathfrak{C}}_{F1}(X))$ & \makecell[l]{$(\underline{\mathfrak{C}}_{3,0}(X)$, $\overline{\mathfrak{C}}_{3,0}(X))$, $(\underline{\mathfrak{C}}_{3,2}(X)$, $\overline{\mathfrak{C}}_{3,2}(X))$, \\ $(\underline{\mathfrak{C}}_{3,5}(X)$, $\overline{\mathfrak{C}}_{3,5}(X))$}  & $(\underline{\mathfrak{C}}_{M}(X)$, $\overline{\mathfrak{C}}_{M}(X))$ & $(\underline{\mathfrak{C}}_{2,5}(X)$, $\overline{\mathfrak{C}}_{2,5}(X))$\\
			\rule{0pt}{18pt}
			$(\underline{\mathfrak{C}}_{F2}(X)$, $\overline{\mathfrak{C}}_{F2}(X))$ & $(\underline{\mathfrak{C}}_{1,2}(X)$, $\overline{\mathfrak{C}}_{1,2}(X))$ & \quad & \quad \\
			\hline
		\end{tabular}

}
	\end{center}	
\end{table}

\begin{table}[H]
\caption{$t$ norm-based neighborhood-related fuzzy covering-based rough set models.}\label{t4}

	\begin{center}
\resizebox{.95\columnwidth}{!}{
		\begin{tabular}{l|l|l|l}
			\hline
			Classification & Operators  & Classification & Operators \\
			\hline
			\rule{0pt}{18pt}
			$(\underline{\mathcal{C}}_{A1}(X)$, $\overline{\mathcal{C}}_{A1}(X))$  & \makecell[l]{$(\underline{\mathcal{C}}_{1,0}(X)$, $\overline{\mathcal{C}}_{1,0}(X))$, $(\underline{\mathcal{C}}_{1,1}(X)$, $\overline{\mathcal{C}}_{1,1}(X))$,\\  $(\underline{\mathcal{C}}_{1,3}(X)$, $\overline{\mathcal{C}}_{1,3}(X))$, $(\underline{\mathcal{C}}_{1,5}(X)$, $\overline{\mathcal{C}}_{1,5}(X))$ } & $(\underline{\mathcal{C}}_{G}(X)$, $\overline{\mathcal{C}}_{G}(X))$ & $(\underline{\mathcal{C}}_{1,4}(X)$, $\overline{\mathcal{C}}_{1,4}(X))$\\
			\rule{0pt}{18pt}
			$(\underline{\mathcal{C}}_{A2}(X)$, $\overline{\mathcal{C}}_{A2}(X))$& $(\underline{\mathcal{C}}_{2,3}(X)$, $\overline{\mathcal{C}}_{2,3}(X))$ & $(\underline{\mathcal{C}}_{H1}(X)$, $\overline{\mathcal{C}}_{H1}(X))$ & \makecell[l]{$(\underline{\mathcal{C}}_{4,0}(X)$, $\overline{\mathcal{C}}_{4,0}(X))$, $(\underline{\mathcal{C}}_{4,2}(X)$, $\overline{\mathcal{C}}_{4,2}(X))$, \\ $(\underline{\mathcal{C}}_{4,5}(X)$, $\overline{\mathcal{C}}_{4,5}(X))$} \\
			\rule{0pt}{18pt}
			$(\underline{\mathcal{C}}_{B}(X)$, $\overline{\mathcal{C}}_{B}(X))$ & $(\underline{\mathcal{C}}_{3,1}(X)$, $\overline{\mathcal{C}}_{3,1}(X))$ & $(\underline{\mathcal{C}}_{H2}(X)$, $\overline{\mathcal{C}}_{H2}(X))$ & $(\underline{\mathcal{C}}_{2,2}(X)$, $\overline{\mathcal{C}}_{2,2}(X))$\\
			\rule{0pt}{18pt}
			 $(\underline{\mathcal{C}}_{C}(X)$, $\overline{\mathcal{C}}_{C}(X))$ & $(\underline{\mathcal{C}}_{3,3}(X)$, $\overline{\mathcal{C}}_{3,3}(X))$& $(\underline{\mathcal{C}}_{I}(X)$, $\overline{\mathcal{C}}_{I}(X))$ & $(\underline{\mathcal{C}}_{2,4}(X)$, $\overline{\mathcal{C}}_{2,4}(X))$\\
			\rule{0pt}{18pt}
			$(\underline{\mathcal{C}}_{D}(X)$, $\overline{\mathcal{C}}_{D}(X))$ & $(\underline{\mathcal{C}}_{4,3}(X)$, $\overline{\mathcal{C}}_{4,3}(X))$ & $(\underline{\mathcal{C}}_{J}(X)$, $\overline{\mathcal{C}}_{J}(X))$ & $(\underline{\mathcal{C}}_{3,4}(X)$, $\overline{\mathcal{C}}_{3,4}(X))$\\
			\rule{0pt}{18pt}
			$(\underline{\mathcal{C}}_{E}(X)$, $\overline{\mathcal{C}}_{E}(X))$ & $(\underline{\mathcal{C}}_{2,0}(X)$, $\overline{\mathcal{C}}_{2,0}(X))$, $(\underline{\mathcal{C}}_{2,1}(X)$, $\overline{\mathfrak{C}}_{2,1}(X))$ & $(\underline{\mathcal{C}}_{K}(X)$, $\overline{\mathcal{C}}_{K}(X))$ & $(\underline{\mathcal{C}}_{4,4}(X)$, $\overline{\mathcal{C}}_{4,4}(X))$ \\
			\rule{0pt}{18pt}
			$(\underline{\mathcal{C}}_{F1}(X)$, $\overline{\mathcal{C}}_{F1}(X))$ & \makecell[l]{$(\underline{\mathcal{C}}_{3,0}(X)$, $\overline{\mathcal{C}}_{3,0}(X))$, $(\underline{\mathcal{C}}_{3,2}(X)$, $\overline{\mathcal{C}}_{3,2}(X))$, \\ $(\underline{\mathcal{C}}_{3,5}(X)$, $\overline{\mathcal{C}}_{3,5}(X))$} & $(\underline{\mathcal{C}}_{L}(X)$, $\overline{\mathcal{C}}_{L}(X))$ & $(\underline{\mathcal{C}}_{4,1}(X)$, $\overline{\mathcal{C}}_{4,1}(X))$ \\
			\rule{0pt}{18pt}
			 $(\underline{\mathcal{C}}_{F2}(X)$, $\overline{\mathcal{C}}_{F2}(X))$ & $(\underline{\mathcal{C}}_{1,2}(X)$, $\overline{\mathcal{C}}_{1,2}(X))$ & $(\underline{\mathcal{C}}_{M}(X)$, $\overline{\mathcal{C}}_{M}(X))$ & $(\underline{\mathcal{C}}_{2,5}(X)$, $\overline{\mathcal{C}}_{2,5}(X))$\\\hline
		\end{tabular}}
	\end{center}	
\end{table}
\end{definition}

\begin{remark}\label{r5.1}
According to definitions of $(\underline{\mathfrak{C}}_{i,j}(X),\overline{\mathfrak{C}}_{i,j}(X))$ and $(\underline{\mathcal{C}}_{i,j}(X),\overline{\mathcal{C}}_{i,j}(X))$, if the neighborhood operators $N_a^{\mathscr{C}_b}$ and $N_c^{\mathscr{C}_d}$, which are used to construct approximation spaces, satisfy the partial order relation $``\leq"$ discussed in subsection \ref{Section 5}, that is, $N_a^{\mathscr{C}_b}\leq N_c^{\mathscr{C}_d}$, we have $\underline{\mathfrak{C}}_{a,b}(X)\supseteq\underline{\mathfrak{C}}_{c,d}(X)$ and $\overline{\mathfrak{C}}_{a,b}(X)\subseteq\overline{\mathfrak{C}}_{c,d}(X)$. Analogously, binary pair, $(\underline{\mathcal{C}}_{i,j}(X),\overline{\mathcal{C}}_{i,j}(X))$, has the same property.
\end{remark}

\begin{definition}
Let $(\underline{\mathfrak{C}}_{i_1,j_1}(X),\overline{\mathfrak{C}}_{i_1,j_1}(X))$ and $(\underline{\mathfrak{C}}_{i_2,j_2}(X),\overline{\mathfrak{C}}_{i_2,j_2}(X))$ be two of ONRFRS models. We call $$(\underline{\mathfrak{C}}_{i_1,j_1}(X),\overline{\mathfrak{C}}_{i_1,j_1}(X))\prec(\underline{\mathfrak{C}}_{i_2,j_2}(X),\overline{\mathfrak{C}}_{i_2,j_2}(X)),$$ If $(\underline{\mathfrak{C}}_{i_1,j_1}(X)\supseteq(\underline{\mathfrak{C}}_{i_2,j_2}(X)$, and $\overline{\mathfrak{C}}_{i_1,j_1}(X))\subseteq\overline{\mathfrak{C}}_{i_2,j_2}(X))$.
\end{definition}

\begin{remark}\label{rere}
Obviously, $\big((\underline{\mathfrak{C}}_{i,j}(X),\overline{\mathfrak{C}}_{i,j}(X)),\prec)\big)$ is a partially order set and we can also similarly define the partially order set $\big((\underline{\mathcal{C}}_{i,j}(X),\overline{\mathcal{C}}_{i,j}(X)),\prec^{*})\big)$. Combining Remark \ref{r5.1} with the relationship among the neighborhood operators we discussed in the above sections, we can conclude that the relationships among these binary pairs are consistent with their constituent elements. In other words, the Hasse diagrams of two types of approximation operators are same as their corresponding fuzzy neighborhood operators.
\end{remark}
 Some properties of these two types of neighborhood-related fuzzy covering-based rough set models are illustrated as follows.
\begin{proposition}\label{P1}Let $N_i^{\mathscr{C}_{j}}$ be one of overlap function-based fuzzy neighborhood operators and $(\underline{\mathfrak{C}}_{i,j}(X),\overline{\mathfrak{C}}_{i,j}(X))$ a pair of approximation operators. For each $X,Y\in \mathscr{F}(\mathbb{U})$, we obtain some conclusions as below.
 \begin{flushleft}
		(1) $\underline{\mathfrak{C}}_{i,j}(X^c)=(\overline{\mathfrak{C}}_{i,j}(X))^c$, $\overline{\mathfrak{C}}_{i,j}(X^c)=(\underline{\mathfrak{C}}_{i,j}(X))^c$, where $X^c(x)=1-X(x), x\in \mathbb{U}$ ;\\
		(2) $\underline{\mathfrak{C}}_{i,j}(\mathbb{U})=\mathbb{U}$, $\overline{\mathfrak{C}}_{i,j}(\emptyset)=\emptyset$;\\
		(3) $\underline{\mathfrak{C}}_{i,j}(X\bigcap Y)=\underline{\mathfrak{C}}_{i,j}(X)\bigcap\underline{\mathfrak{C}}_{i,j}( Y)$, $\overline{\mathfrak{C}}_{i,j}(X\bigcup Y)=\overline{\mathfrak{C}}_{i,j}(X)\bigcup\overline{\mathfrak{C}}_{i,j}( Y)$;\\
		(4) If $X\subseteq Y$, then $\underline{\mathfrak{C}}_{i,j}(X)\subseteq\underline{\mathfrak{C}}_{i,j}( Y)$, $\overline{\mathfrak{C}}_{i,j}(X)\subseteq\overline{\mathfrak{C}}_{i,j}( Y)$;\\
		(5) $\underline{\mathfrak{C}}_{i,j}(X\bigcup Y)\supseteq\underline{\mathfrak{C}}_{i,j}(X)\bigcup\underline{\mathfrak{C}}_{i,j}( Y)$, $\overline{\mathfrak{C}}_{i,j}(X\bigcap Y)\subseteq\overline{\mathfrak{C}}_{i,j}(X)\bigcap\overline{\mathfrak{C}}_{i,j}( Y)$;\\
(6) $X\subseteq Y$ or $X\supseteq Y\Longrightarrow \underline{\mathfrak{C}}_{i,j}(X\bigcup Y)=\underline{\mathfrak{C}}_{i,j}(X)\bigcup\underline{\mathfrak{C}}_{i,j}( Y)$ and $\overline{\mathfrak{C}}_{i,j}(X\bigcap Y)=\overline{\mathfrak{C}}_{i,j}(X)\bigcap\overline{\mathfrak{C}}_{i,j}( Y)$ ;\\
(7) If $1-N_i^{\mathscr{C}_{j}}(x)(x)\leq X(x)\leq N_i^{\mathscr{C}_{j}}(x)(x)$ for all $x\in \mathbb{U}$, then $\underline{\mathfrak{C}}_{i,j}\big(\underline{\mathfrak{C}}_{i,j}(X)\big)\subseteq\underline{\mathfrak{C}}_{i,j}(X)\subseteq X\subseteq\overline{\mathfrak{C}}_{i,j}(X)\subseteq\overline{\mathfrak{C}}_{i,j}\big(\overline{\mathfrak{C}}_{i,j}(X)\big)$.

	\end{flushleft}
Where $i=1,2,3,4$ and $j=0,1,2,3,4,5$
\begin{proof}
(1) Considering that $$\overline{\mathfrak{C}}_{i,j}(X^c)(x)=\vee_{y\in \mathbb{U}}[N_i^{\mathscr{C}_{j}}(x)(y)\wedge X^c(y)]=1-\wedge_{y\in \mathbb{U}}[(1-N_i^{\mathscr{C}_{j}}(x)(y))\vee X(y)]=1-\underline{\mathfrak{C}}_{i,j}(X)(x)=(\underline{\mathfrak{C}}_{i,j}(X))^c(x).$$ Hence, we have $\overline{\mathfrak{C}}_{i,j}(X^c)=(\underline{\mathfrak{C}}_{i,j}(X))^c$. Correspondingly, replacing $X^c$ with $X$, the equation $\overline{\mathfrak{C}}_{i,j}(X^c)=(\underline{\mathfrak{C}}_{i,j}(X))^c$ can be obtained.

(2) For all $x\in \mathbb{U}$, we have $\mathbb{U}(x)=1$ and $\emptyset(x)=0$, thus, \begin{align*}&\underline{\mathfrak{C}}_{i,j}(\mathbb{U})(x)=\wedge_{y\in \mathbb{U}}[(1-N_i^{\mathscr{C}_{j}}(x)(y))\vee \mathbb{U}(y)]=1=\mathbb{U}(x),\\&\overline{\mathfrak{C}}_{i,j}(\emptyset)(x)=\vee_{y\in \mathbb{U}}[N_i^{\mathscr{C}_{j}}(x)(y)\wedge \emptyset(y)]=0=\emptyset(x).\end{align*}That is, we have $\underline{\mathfrak{C}}_{i,j}(\mathbb{U})=\mathbb{U}$ and $\overline{\mathfrak{C}}_{i,j}(\emptyset)=\emptyset$.

(3) We can verify the conclusion, $\underline{\mathfrak{C}}_{i,j}(X\bigcap Y)=\underline{\mathfrak{C}}_{i,j}(X)\bigcap\underline{\mathfrak{C}}_{i,j}( Y)$, by following operations,
$$\begin{aligned}\big(\underline{\mathfrak{C}}_{i,j}(X\bigcap Y)\big)(x)&=\wedge_{y\in \mathbb{U}}[(1-N_i^{\mathscr{C}_{j}}(x)(y))\vee (X\bigcap Y)(y)]\\&=\wedge_{y\in \mathbb{U}}[\big((1-N_i^{\mathscr{C}_{j}}(x)(y))\vee X(y)\big)\wedge(1-N_i^{\mathscr{C}_{j}}(x)(y))\vee Y(y)\big)]\\&=\big(\wedge_{y\in \mathbb{U}}[(1-N_i^{\mathscr{C}_{j}}(x)(y))\vee X(y)]\big)\wedge\big(\wedge_{y\in \mathbb{U}}[(1-N_i^{\mathscr{C}_{j}}(x)(y))\vee Y(y)]\big)\\&=\big(\underline{\mathfrak{C}}_{i,j}(X)\bigcap\underline{\mathfrak{C}}_{i,j}( Y)\big)(x),\end{aligned}$$
Similarly, $\overline{\mathfrak{C}}_{i,j}(X\bigcup Y)=\overline{\mathfrak{C}}_{i,j}(X)\bigcup\overline{\mathfrak{C}}_{i,j}( Y)$ can also be obtained.

(4) According to the monotonicity of $``\vee"$ and $``\wedge"$, and the definition of $(\underline{\mathfrak{C}}_{i,j}(X),\overline{\mathfrak{C}}_{i,j}(X))$, the above conclusions are obvious.

(5) Since $X\subseteq X\bigcup Y$, $Y\subseteq X\bigcup Y$, $X\bigcap Y\subseteq X$ and $X\bigcap Y\subseteq Y$, combining with conclusion (4), we have the following relations, $$\underline{\mathfrak{C}}_{i,j}(X)\subseteq\underline{\mathfrak{C}}_{i,j}(X\bigcup Y),~\underline{\mathfrak{C}}_{i,j}(Y)\subseteq\underline{\mathfrak{C}}_{i,j}(X\bigcup Y),~\overline{\mathfrak{C}}_{i,j}(X\bigcap Y)\subseteq\overline{\mathfrak{C}}_{i,j}(X),~\overline{\mathfrak{C}}_{i,j}(X\bigcap Y)\subseteq\overline{\mathfrak{C}}_{i,j}( Y).$$
Hence, we have $\underline{\mathfrak{C}}_{i,j}(X\bigcup Y)\supseteq\underline{\mathfrak{C}}_{i,j}(X)\bigcup\underline{\mathfrak{C}}_{i,j}( Y)$, $\overline{\mathfrak{C}}_{i,j}(X\bigcap Y)\subseteq\overline{\mathfrak{C}}_{i,j}(X)\bigcap\overline{\mathfrak{C}}_{i,j}( Y)$.

(6) The condition $``X\subseteq Y"$ is similar to the condition $``X\supseteq Y"$, we just need to prove one of them, and we now verify the conclusion under the condition $``X\subseteq Y"$.
$$\begin{aligned}\big(\underline{\mathfrak{C}}_{i,j}(X\bigcup Y)\big)(x)&=\wedge_{y\in \mathbb{U}}[(1-N_i^{\mathscr{C}_{j}}(x)(y))\vee (X\bigcup Y)(y)]\\&=\wedge_{y\in \mathbb{U}}[\big((1-N_i^{\mathscr{C}_{j}}(x)(y))\vee X(y)\vee Y(y)\big)]\\&=\wedge_{y\in \mathbb{U}}[\big((1-N_i^{\mathscr{C}_{j}}(x)(y))\vee Y(y)\big)]\\&=\underline{\mathfrak{C}}_{i,j}( Y)(x).\end{aligned}$$
According to the condition $``X\subseteq Y"$ and the statement (4), we have the following operations,
$$\begin{aligned}\big(\underline{\mathfrak{C}}_{i,j}(X)\bigcup\underline{\mathfrak{C}}_{i,j}( Y)\big)(x)&=\big(\wedge_{y\in \mathbb{U}}[(1-N_i^{\mathscr{C}_{j}}(x)(y))\vee X(y)]\big)\vee\big(\wedge_{y\in \mathbb{U}}[(1-N_i^{\mathscr{C}_{j}}(x)(y))\vee Y(y)]\big)\\&=\big(\wedge_{y\in \mathbb{U}}[(1-N_i^{\mathscr{C}_{j}}(x)(y))\vee Y(y)]\big)\\&=\underline{\mathfrak{C}}_{i,j}( Y)(x).\end{aligned}$$ Thus, we have $\underline{\mathfrak{C}}_{i,j}(X\bigcup Y)=\underline{\mathfrak{C}}_{i,j}(X)\bigcup\underline{\mathfrak{C}}_{i,j}( Y)$. Analogously, we can prove $\overline{\mathfrak{C}}_{i,j}(X\bigcap Y)=\overline{\mathfrak{C}}_{i,j}(X)\bigcap\overline{\mathfrak{C}}_{i,j}( Y)$ as well.

(7) When we have $1-N_i^{\mathscr{C}_{j}}(x)(x)\leq X(x)\leq N_i^{\mathscr{C}_{j}}(x)(x)$ for each $x\in \mathbb{U}$, via the definition of $(\underline{\mathfrak{C}}_{i,j}(X),\overline{\mathfrak{C}}_{i,j}(X))$, we can conclude that \begin{align*}&X(x)=N_i^{\mathscr{C}_{j}}(x)(y)\wedge X(y)\leq \vee_{y\in \mathbb{U}}[N_i^{\mathscr{C}_{j}}(x)(y)\wedge X(y)]=\overline{\mathfrak{C}}_{i,j}(X)(x),\\&\underline{\mathfrak{C}}_{i,j}(X)(x)=\wedge_{y\in \mathbb{U}}[(1-N_i^{\mathscr{C}_{j}}(x)(y))\vee X(y)]\leq (1-N_i^{\mathscr{C}_{j}}(x)(y))\vee X(y)=X(x).\end{align*}
Thus, $\underline{\mathfrak{C}}_{i,j}(X)\subseteq X\subseteq\overline{\mathfrak{C}}_{i,j}(X)$. Considering the statement (4), we have  $\underline{\mathfrak{C}}_{i,j}\big(\underline{\mathfrak{C}}_{i,j}(X)\big)\subseteq\underline{\mathfrak{C}}_{i,j}(X)$ and $\overline{\mathfrak{C}}_{i,j}\big(\overline{\mathfrak{C}}_{i,j}(X)\big)\supseteq\overline{\mathfrak{C}}_{i,j}(X)$, that is,  $\underline{\mathfrak{C}}_{i,j}\big(\underline{\mathfrak{C}}_{i,j}(X)\big)\subseteq\underline{\mathfrak{C}}_{i,j}(X)\subseteq X\subseteq\overline{\mathfrak{C}}_{i,j}(X)\subseteq\overline{\mathfrak{C}}_{i,j}\big(\overline{\mathfrak{C}}_{i,j}(X)\big)$.
\end{proof}

There is an example to illustrate that ``$ \underline{\mathfrak{C}}_{i,j}(X\bigcup Y)=\underline{\mathfrak{C}}_{i,j}(X)\bigcup\underline{\mathfrak{C}}_{i,j}( Y)$ and $\overline{\mathfrak{C}}_{i,j}(X\bigcap Y)=\overline{\mathfrak{C}}_{i,j}(X)\bigcap\overline{\mathfrak{C}}_{i,j}( Y)\nRightarrow X\subseteq Y$ or $X\supseteq Y$" and ``$\underline{\mathfrak{C}}_{i,j}(X\bigcup Y)\neq\underline{\mathfrak{C}}_{i,j}(X)\bigcup\underline{\mathfrak{C}}_{i,j}( Y)$, $\overline{\mathfrak{C}}_{i,j}(X\bigcap Y)\neq\overline{\mathfrak{C}}_{i,j}(X)\bigcap\overline{\mathfrak{C}}_{i,j}( Y)$".
\begin{remark}
Let $\mathbb{U}=\{x_1,x_2\}$, $\mathscr{C}$ a fuzzy covering with $\mathscr{C}_a=\{K_1,K_2\}$, where $K_1=\frac{1}{x_1}+\frac{0.1}{x_2},$ $K_2=\frac{0.1}{x_1}+\frac{1}{x_2}.$
Considering the overlap function $O_2^V$ and its implicator $I_{O_2^V}$ which are used to establish the derived coverings and operators $N_i^{\mathscr{C}_j}$. Let $i=4$, $j=0$,  and $i=1$, $j=0$, then the neighborhood operators, $N_4^{{\mathscr{C}}_0}(x_t)$ and $N_1^{{\mathscr{C}}_0}(x_t)$, of $x_t$ $(t=1,2)$ can be listed as below:
$$N_4^{{\mathscr{C}}_0}(x_1)=\frac{1}{x_1}+\frac{0.1}{x_2}, N_4^{{\mathscr{C}}_0}(x_2)=\frac{0.1}{x_1}+\frac{1}{x_2},$$ $$N_1^{{\mathscr{C}}_0}(x_1)=\frac{1}{x_1}+\frac{0.1}{x_2}, N_1^{{\mathscr{C}}_0}(x_2)=\frac{0.1}{x_1}+\frac{1}{x_2}.$$
For $X_1(x_t)=\frac{0.33}{x_1}+\frac{0.35}{x_2}$ and $Y_1(x_t)=\frac{0.35}{x_1}+\frac{0.33}{x_2}$, which are no containment relationship between each other, we have that, $$ \underline{{\mathfrak{C}}}_{4,0}(X_1\bigcup Y_1)=\frac{0.35}{x_1}+\frac{0.35}{x_2}=\underline{{\mathfrak{C}}}_{4,0}(X_1)\bigcup\underline{{\mathfrak{C}}}_{4,0}( Y_1),$$$$ \overline{{\mathfrak{C}}}_{4,0}(X_1\bigcap Y_1)=\frac{0.33}{x_1}+\frac{0.33}{x_2}=\overline{{\mathfrak{C}}}_{4,0}(X_1)\bigcap\overline{{\mathfrak{C}}}_{4,0}( Y_1),$$$$ \underline{{\mathfrak{C}}}_{1,0}(X_1\bigcup Y_1)=\frac{0.35}{x_1}+\frac{0.35}{x_2}=\underline{{\mathfrak{C}}}_{1,0}(X_1)\bigcup\underline{{\mathfrak{C}}}_{1,0}( Y_1),$$$$ \overline{{\mathfrak{C}}}_{1,0}(X_1\bigcap Y_1)=\frac{0.33}{x_1}+\frac{0.33}{x_2}=\overline{{\mathfrak{C}}}_{1,0}(X_1)\bigcap\overline{{\mathfrak{C}}}_{1,0}( Y_1).$$
Therefore, the statement $`` \underline{\mathfrak{C}}_{i,j}(X\bigcup Y)=\underline{\mathfrak{C}}_{i,j}(X)\bigcup\underline{\mathfrak{C}}_{i,j}( Y)$ and $\overline{\mathfrak{C}}_{i,j}(X\bigcap Y)=\overline{\mathfrak{C}}_{i,j}(X)\bigcap\overline{\mathfrak{C}}_{i,j}( Y)\nRightarrow X\subseteq Y$ or $X\supseteq Y"$ is true.

For $X_2(x_t)=\frac{0.91}{x_1}+\frac{0.92}{x_2}$ and $Y_2(x_t)=\frac{0.92}{x_1}+\frac{0.91}{x_2}$, we have that,
$$ \underline{{\mathfrak{C}}}_{4,0}(X_2\bigcup Y_2)=\frac{0.92}{x_1}+\frac{0.92}{x_2}\neq \underline{{\mathfrak{C}}}_{4,0}(X_2)\bigcup\underline{{\mathfrak{C}}}_{4,0}( Y_2)=\frac{0.91}{x_1}+\frac{0.91}{x_2},$$$$ \underline{{\mathfrak{C}}}_{1,0}(X_2\bigcup Y_2)=\frac{0.92}{x_1}+\frac{0.92}{x_2}\neq \underline{{\mathfrak{C}}}_{1,0}(X_2)\bigcup\underline{{\mathfrak{C}}}_{1,0}( Y_2)=\frac{0.91}{x_1}+\frac{0.91}{x_2}.$$For $X_3(x_t)=\frac{0.09}{x_1}+\frac{0.08}{x_2}$ and $Y_3(x_t)=\frac{0.08}{x_1}+\frac{0.09}{x_2}$, we have that,
$$ \overline{{\mathfrak{C}}}_{4,0}(X_3\bigcap Y_3)=\frac{0.08}{x_1}+\frac{0.08}{x_3}\neq \overline{{\mathfrak{C}}}_{4,0}(X_3)\bigcap\overline{{\mathfrak{C}}}_{4,0}( Y_3)=\frac{0.09}{x_1}+\frac{0.09}{x_3},$$$$ \overline{{\mathfrak{C}}}_{1,0}(X_3\bigcap Y_3)=\frac{0.08}{x_1}+\frac{0.08}{x_3}\neq \overline{{\mathfrak{C}}}_{1,0}(X_3)\bigcap\overline{{\mathfrak{C}}}_{1,0}( Y_3)=\frac{0.09}{x_1}+\frac{0.09}{x_3}.$$
The above examples mean the statement ``$\underline{\mathfrak{C}}_{i,j}(X\bigcup Y)\neq\underline{\mathfrak{C}}_{i,j}(X)\bigcup\underline{\mathfrak{C}}_{i,j}( Y)$, $\overline{\mathfrak{C}}_{i,j}(X\bigcap Y)\neq\overline{\mathfrak{C}}_{i,j}(X)\bigcap\overline{\mathfrak{C}}_{i,j}( Y)$" is true.
\end{remark}

\begin{remark}
Analogously, the $t$-norm based approximation operators, $(\underline{\mathcal{C}}_{i,j}(X),\overline{\mathcal{C}}_{i,j}(X))$, also satisfy the above properties.
\end{remark}
\end{proposition}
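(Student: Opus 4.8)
The plan is to treat both approximation operators as pointwise constructions over the complete lattice $([0,1],\le)$ equipped with the standard fuzzy negation $t\mapsto 1-t$, so that every one of the seven assertions reduces to an identity or an inequality between membership values, verified for a fixed but arbitrary $x\in\mathbb{U}$. Crucially, none of the arguments uses the particular overlap-function structure of $N_i^{\mathscr{C}_j}$; they use only that for each $x$ the quantity $N_i^{\mathscr{C}_j}(x)$ is a fixed fuzzy set. Hence a single uniform proof covers all $i=1,2,3,4$ and $j=0,\dots,5$ simultaneously, and $\overline{\mathfrak{C}}_{i,j}$ is recognised as the fuzzy ``diamond'' dual to the fuzzy ``box'' $\underline{\mathfrak{C}}_{i,j}$.

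First I would establish the duality (1), since it halves the remaining work. Writing $a=N_i^{\mathscr{C}_j}(x)(y)$, the De Morgan laws on $[0,1]$ give $1-\bigwedge_{y}[(1-a)\vee X(y)]=\bigvee_{y}[a\wedge(1-X(y))]$, which is exactly $\overline{\mathfrak{C}}_{i,j}(X^c)(x)=(\underline{\mathfrak{C}}_{i,j}(X))^c(x)$; the companion equality follows by substituting $X^c$ for $X$ and using $(X^c)^c=X$. Once (1) is in hand, each ``upper'' half of (2), (3), (5), (6) is obtained from its ``lower'' counterpart by complementation, so I only need to prove the lower statements directly.

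I would then dispatch (2)--(6) by elementary lattice computations. For (2) I substitute $\mathbb{U}(y)=1$ and $\emptyset(y)=0$ and read off the result from $(1-a)\vee 1=1$ and $a\wedge 0=0$. Statement (3) is the distributivity of $\vee$ over the infimum $\bigwedge_y$ together with its dual; (4) is immediate monotonicity of $\vee$ and $\wedge$ in each argument; and (5) follows from (4) applied to the inclusions $X,Y\subseteq X\cup Y$ and $X\cap Y\subseteq X,Y$. For (6) under the hypothesis $X\subseteq Y$ I use $X\cup Y=Y$ to collapse the left side, while (4) forces $\underline{\mathfrak{C}}_{i,j}(X)\cup\underline{\mathfrak{C}}_{i,j}(Y)=\underline{\mathfrak{C}}_{i,j}(Y)$, giving equality; the case $X\supseteq Y$ and the dual intersection statement are symmetric.

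The only assertion requiring a genuine idea is (7), which I expect to be the main obstacle, because the operators $N_i^{\mathscr{C}_j}$ are \emph{not} reflexive in general, so I cannot invoke reflexivity and must extract the inclusions $\underline{\mathfrak{C}}_{i,j}(X)\subseteq X\subseteq\overline{\mathfrak{C}}_{i,j}(X)$ directly from the stated bound $1-N_i^{\mathscr{C}_j}(x)(x)\le X(x)\le N_i^{\mathscr{C}_j}(x)(x)$. The key observation is to isolate the single term $y=x$ in each aggregation: in $\overline{\mathfrak{C}}_{i,j}(X)(x)$ the term $N_i^{\mathscr{C}_j}(x)(x)\wedge X(x)$ equals $X(x)$ precisely because $X(x)\le N_i^{\mathscr{C}_j}(x)(x)$, so the supremum is $\ge X(x)$; dually, in $\underline{\mathfrak{C}}_{i,j}(X)(x)$ the term $(1-N_i^{\mathscr{C}_j}(x)(x))\vee X(x)$ equals $X(x)$ because $1-N_i^{\mathscr{C}_j}(x)(x)\le X(x)$, so the infimum is $\le X(x)$. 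This yields $\underline{\mathfrak{C}}_{i,j}(X)\subseteq X\subseteq\overline{\mathfrak{C}}_{i,j}(X)$, and applying the monotonicity (4) to these two inclusions produces $\underline{\mathfrak{C}}_{i,j}(\underline{\mathfrak{C}}_{i,j}(X))\subseteq\underline{\mathfrak{C}}_{i,j}(X)$ and $\overline{\mathfrak{C}}_{i,j}(X)\subseteq\overline{\mathfrak{C}}_{i,j}(\overline{\mathfrak{C}}_{i,j}(X))$, which chain together into the asserted five-term inclusion. Finally I would note that the identical argument, carried out with $\mathbb{N}_i^{\mathbf{C}_j}$ in place of $N_i^{\mathscr{C}_j}$, proves the subsequent remark that the $t$-norm-based operators satisfy the same properties.
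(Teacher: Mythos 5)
Your proposal is correct and follows essentially the same route as the paper's proof: pointwise lattice computations over $([0,1],\le)$ for (1)--(6), and for (7) exactly the paper's key step of isolating the $y=x$ term in each aggregation (where the hypothesis $1-N_i^{\mathscr{C}_j}(x)(x)\le X(x)\le N_i^{\mathscr{C}_j}(x)(x)$ makes that term equal to $X(x)$), followed by monotonicity (4) to obtain the outer inclusions. The only difference is organizational: you invoke the duality (1) to deduce each ``upper'' statement from its ``lower'' counterpart, whereas the paper proves both halves directly (or by symmetric computation), a harmless streamlining that changes nothing essential.
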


Considering the overlap functions satisfy (O7) and also their $R_O$-implicators which are used to establish coverings and neighborhood operators, for each $x\in \mathbb{U}$, we have $N_i^{\mathscr{C}_j}(x)(x)=1.$ Thus, we obtain the following properties.
\begin{proposition}
Let $N_i^{\mathscr{C}_{j}}$ be one of overlap function-based fuzzy neighborhood operators and $(\underline{\mathfrak{C}}_{i,j}(X),\overline{\mathfrak{C}}_{i,j}(X))$ a pair of approximation operators. For each $X\in \mathscr{F}(\mathbb{U})$ and $\Lambda\in [0,1]$, we obtain some conclusions as below.
\begin{flushleft}
		
		(1) $\underline{\mathfrak{C}}_{i,j}(X\bigcup \Lambda_{\mathbb{U}})=\underline{\mathfrak{C}}_{i,j}(X)\bigcup\Lambda_{\mathbb{U}}$, $\overline{\mathfrak{C}}_{i,j}(X\bigcap \Lambda_{\mathbb{U}})=\overline{\mathfrak{C}}_{i,j}(X)\bigcap\Lambda_{\mathbb{U}}$;\\
(2) $\underline{\mathfrak{C}}_{i,j}(\Lambda_{\mathbb{U}})=\Lambda_{\mathbb{U}}$, $\overline{\mathfrak{C}}_{i,j}(\Lambda_{\mathbb{U}})=\Lambda_{\mathbb{U}}$;\\
		(3) $\underline{\mathfrak{C}}_{i,j}(X\bigcap \Lambda_{\mathbb{U}})=\underline{\mathfrak{C}}_{i,j}(X)\bigcap\Lambda_{\mathbb{U}}$, $\overline{\mathfrak{C}}_{i,j}(X\bigcup \Lambda_{\mathbb{U}})=\overline{\mathfrak{C}}_{i,j}(X)\bigcup\Lambda_{\mathbb{U}}$;\\
(4) $\underline{\mathfrak{C}}_{i,j}(\emptyset)=\emptyset,$ $\overline{\mathfrak{C}}_{i,j}(\mathbb{U})=\mathbb{U}.$
	\end{flushleft}
Where $i=1,2,3,4$, $j=0,1,2,3,4,5$ and $\Lambda_{\mathbb{U}}$ is the constant fuzzy set: $\Lambda_\mathbb{U}(x)=\Lambda$, for each $x\in \mathbb{U}$.
\begin{proof}
(1) For each $x\in \mathbb{U}$, we have that $$\begin{aligned}\big(\underline{\mathfrak{C}}_{i,j}(X\bigcup \Lambda_\mathbb{U})\big)(x)&=\wedge_{y\in \mathbb{U}}[(1-N_i^{\mathscr{C}_{j}}(x)(y))\vee (X\bigcup \Lambda_\mathbb{U})(y)]\\&=\wedge_{y\in \mathbb{U}}[\big((1-N_i^{\mathscr{C}_{j}}(x)(y))\vee X(y)\big)\vee \Lambda\big)]\\&=\big(\wedge_{y\in \mathbb{U}}[(1-N_i^{\mathscr{C}_{j}}(x)(y))\vee X(y)]\big)\vee\Lambda\\&=\underline{\mathfrak{C}}_{i,j}(X)(x)\vee \Lambda.\end{aligned}$$
Thus, $\underline{\mathfrak{C}}_{i,j}(X\bigcup \Lambda_{\mathbb{U}})=\underline{\mathfrak{C}}_{i,j}(X)\bigcup\Lambda_{\mathbb{U}}$. Similarly, $\overline{\mathfrak{C}}_{i,j}(X\bigcap \Lambda_\mathbb{U})=\overline{\mathfrak{C}}_{i,j}(X)\bigcap\Lambda_\mathbb{U}$ can also be proven.

(2) According to Definition \ref{D1}, for each $x\in\mathbb{U}$, there exists at least one set $K$ from the covering, such that $K(x)=1$, then at least one neighborhood of $x$, $N_i^{\mathscr{C}_j}(x)(x)=1$. Thus $$\underline{\mathfrak{C}}_{i,j}(\Lambda_{\mathbb{U}})(x)=\wedge_{y\in \mathbb{U}}[(1-N_i^{\mathscr{C}_{j}}(x)(y))\vee \Lambda]=\Lambda,$$ and
$$\overline{\mathfrak{C}}_{i,j}(\Lambda_{\mathbb{U}})(x)=\vee_{y\in \mathbb{U}}[N_i^{\mathscr{C}_{j}}(x)(y)\wedge \Lambda]=\Lambda.$$ That is $\underline{\mathfrak{C}}_{i,j}(\Lambda_{\mathbb{U}})=\Lambda_{\mathbb{U}}$ and $\overline{\mathfrak{C}}_{i,j}(\Lambda_{\mathbb{U}})=\Lambda_{\mathbb{U}}$.

(3) According to Proposition \ref{P1} (3), we obtain that $\underline{\mathfrak{C}}_{i,j}(X\bigcap \Lambda_\mathbb{U})=\underline{\mathfrak{C}}_{i,j}(X)\bigcap\underline{\mathfrak{C}}_{i,j}( \Lambda_\mathbb{U}).$ Depending on statement (2), we have $\underline{\mathfrak{C}}_{i,j}(\Lambda_{\mathbb{U}})=\Lambda_{\mathbb{U}}$, that is $\underline{\mathfrak{C}}_{i,j}(X\bigcap \Lambda_\mathbb{U})=\underline{\mathfrak{C}}_{i,j}(X)\bigcap \Lambda_\mathbb{U}.$
Similarly, $\overline{\mathfrak{C}}_{i,j}(X\bigcup \Lambda_\mathbb{U})=\overline{\mathfrak{C}}_{i,j}(X)\bigcup\Lambda_\mathbb{U}$ can also be proven.

(4) For each $x\in \mathbb{U}$, we have $N_i^{\mathscr{C}_j}(x)(x)=1.$ Thus, $$\vee_{y\in \mathbb{U}}N_i^{\mathscr{C}_j}(x)(y)=1\Longleftrightarrow\overline{\mathfrak{C}}_{i,j}(\mathbb{U})(x)=1\Longleftrightarrow\underline{\mathfrak{C}}_{i,j}(\emptyset)(x)=0.$$
\end{proof}
\begin{remark}
Analogously, the $t$-norm based approximation operators, $(\underline{\mathcal{C}}_{i,j}(X),\overline{\mathcal{C}}_{i,j}(X))$, also satisfy the above properties.
\end{remark}
\end{proposition}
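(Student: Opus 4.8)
The plan is to reduce all four items to two ingredients: the reflexivity $N_i^{\mathscr{C}_j}(x)(x)=1$ for every $x\in\mathbb{U}$ (available here because $O$ satisfies (O7) and each derived family $\mathscr{C}_j$ is itself a fuzzy covering, so the reflexivity result of Section~\ref{Section 3} applies), and the distributive laws of the unit interval, $\wedge_y(a_y\vee\Lambda)=(\wedge_y a_y)\vee\Lambda$ together with its dual $\vee_y(a_y\wedge\Lambda)=(\vee_y a_y)\wedge\Lambda$ for a constant $\Lambda$. Throughout I would argue pointwise, fixing $x\in\mathbb{U}$ and evaluating each approximation at $x$.

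For (1) I would expand $\underline{\mathfrak{C}}_{i,j}(X\cup\Lambda_{\mathbb{U}})(x)=\wedge_y[(1-N_i^{\mathscr{C}_j}(x)(y))\vee X(y)\vee\Lambda]$ and then pull the constant $\Lambda$ out of the infimum by the distributive law, which yields exactly $\underline{\mathfrak{C}}_{i,j}(X)(x)\vee\Lambda$; the upper-approximation identity is the order dual, pulling $\Lambda$ out of a supremum of meets. For (2), reflexivity is decisive: in $\underline{\mathfrak{C}}_{i,j}(\Lambda_{\mathbb{U}})(x)=\wedge_y[(1-N_i^{\mathscr{C}_j}(x)(y))\vee\Lambda]$ the term $y=x$ equals $(1-1)\vee\Lambda=\Lambda$ while every other term is $\geq\Lambda$, so the infimum is $\Lambda$; dually, in $\vee_y[N_i^{\mathscr{C}_j}(x)(y)\wedge\Lambda]$ the $y=x$ term is $\Lambda$ and all others are $\leq\Lambda$, giving $\Lambda$ again.

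For (3) I would avoid recomputing and instead invoke the homomorphism property already established in Proposition~\ref{P1}(3): with $Y=\Lambda_{\mathbb{U}}$ it gives $\underline{\mathfrak{C}}_{i,j}(X\cap\Lambda_{\mathbb{U}})=\underline{\mathfrak{C}}_{i,j}(X)\cap\underline{\mathfrak{C}}_{i,j}(\Lambda_{\mathbb{U}})$, and then item (2) collapses the second factor to $\Lambda_{\mathbb{U}}$; the dual uses the $\overline{\mathfrak{C}}$-version of Proposition~\ref{P1}(3) with (2). Item (4) is then the special case of (2) at $\Lambda=0$ (lower approximation of $\emptyset$) and $\Lambda=1$ (upper approximation of $\mathbb{U}$), which I would alternatively confirm directly from $\vee_y N_i^{\mathscr{C}_j}(x)(y)\geq N_i^{\mathscr{C}_j}(x)(x)=1$.

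The only non-formal ingredient is reflexivity, so the step needing care is justifying $N_i^{\mathscr{C}_j}(x)(x)=1$ uniformly over all $i\in\{1,2,3,4\}$ and all derived coverings $j\in\{0,\dots,5\}$; this is precisely where the hypothesis (O7) enters, and it is secured by the reflexivity proposition of Section~\ref{Section 3} together with the fact that every $\mathscr{C}_j$ is a fuzzy covering. Everything else is routine lattice algebra, so I anticipate no real obstacle beyond keeping the upper/lower dual computations aligned.
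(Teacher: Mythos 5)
Your proposal is correct and follows essentially the same route as the paper: item (1) by pulling the constant $\Lambda$ through the infimum/supremum, item (2) from reflexivity $N_i^{\mathscr{C}_j}(x)(x)=1$ (secured by (O7), the derived families being fuzzy coverings, and the standing finiteness assumption), item (3) by invoking Proposition~\ref{P1}(3) with $Y=\Lambda_{\mathbb{U}}$ and then collapsing via (2), and item (4) directly from reflexivity. The only cosmetic difference is that you additionally observe (4) as the $\Lambda=0,1$ special cases of (2), which the paper does not state but which is equally valid.
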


\begin{proposition}
Let $N_i^{\mathscr{C}_{j}}$ be one of overlap function-based fuzzy neighborhood operators and $(\underline{\mathfrak{C}}_{i,j}(X),\overline{\mathfrak{C}}_{i,j}(X))$ a pair of approximation operators. For each $X\in \mathscr{F}(\mathbb{U})$, we obtain some conclusions as below.
\begin{flushleft}
		
		(1) $\overline{\mathfrak{C}}_{i,j}(1_y)(x)=N_i^{\mathscr{C}_j}(x)(y)$;\\
(2) $\underline{\mathfrak{C}}_{i,j}(1_{U-\{y\}})(x)=1-N_i^{\mathscr{C}_j}(x)(y)$;\\
		(3) $\overline{\mathfrak{C}}_{i,j}(1_X)(x)=\vee_{y\in X}N_i^{\mathscr{C}_j}(x)(y)$;\\
(4) $\underline{\mathfrak{C}}_{i,j}(1_X)(x)=\wedge_{y\notin X}\big(1-N_i^{\mathscr{C}_j}(x)(y)\big).$
	\end{flushleft}
Where $1_y$ denotes the fuzzy singleton with value $1$ at $y$ and $0$ at the others; $1_X$ denotes the characteristic function of $X$.
\begin{proof}
(1) For each $x,y\in \mathbb{U},$ we obtain that $1_y(x)=0$ if only if $x\neq y$ by the definition of $1_y$. Thus, the following equations hold.
$$\overline{\mathfrak{C}}_{i,j}(1_y)(x)=\vee_{z\in \mathbb{U}}[N_i^{\mathscr{C}_j}(x)(z)\wedge1_y(z)]=N_i^{\mathscr{C}_j}(x)(y).$$

(2) This conclusion is obvious from (1) and the duality.

(3) For each $x\in\mathbb{U}$ and $X\in \mathscr{F}(\mathbb{U})$, we obtain that $1_X(x)=0$ if only if $x\notin X$ by the definition of $1_X$. Thus, $$\begin{aligned}\overline{\mathfrak{C}}_{i,j}(1_X)(x)&=\vee_{y\in \mathbb{U}}[N_i^{\mathscr{C}_j}(x)(y)\wedge1_X(y)]\\&=(\vee_{y\in X}[N_i^{\mathscr{C}_j}(x)(y)\wedge1_X(y)])\vee(\vee_{y\notin X}[N_i^{\mathscr{C}_j}(x)(y)\wedge1_X(y)])\\&=\vee_{y\in X}N_i^{\mathscr{C}_j}(x)(y).\end{aligned}$$

(4)  This conclusion is obvious from (3) and the duality.
\end{proof}
\begin{remark}
Analogously, the $t$-norm based approximation operators, $(\underline{\mathcal{C}}_{i,j}(X),\overline{\mathcal{C}}_{i,j}(X))$, also satisfy the above properties.
\end{remark}
\end{proposition}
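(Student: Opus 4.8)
The plan is to reduce all four identities to direct expansions of the formulas for $\overline{\mathfrak{C}}_{i,j}$ and $\underline{\mathfrak{C}}_{i,j}$ from Definition \ref{D5.1}, exploiting that the arguments $1_y$, $1_{U-\{y\}}$ and $1_X$ are crisp (two-valued). Because each of these takes only the values $0$ and $1$, the lattice operations collapse termwise ($a\wedge 1=a$, $a\wedge 0=0$, $a\vee 1=1$, $a\vee 0=a$), so the supremum or infimum defining the approximation is controlled entirely by \emph{where} the argument is supported. I would prove (1) and (3) first by this direct computation, and then obtain (2) and (4) for free from them via the complementation identity already established in Proposition \ref{P1}(1).

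For (1), I expand $\overline{\mathfrak{C}}_{i,j}(1_y)(x)=\bigvee_{z\in\mathbb{U}}\big[N_i^{\mathscr{C}_j}(x)(z)\wedge 1_y(z)\big]$. Since $1_y(z)=0$ for $z\neq y$ and $1_y(y)=1$, every summand with $z\neq y$ equals $0$ while the summand $z=y$ equals $N_i^{\mathscr{C}_j}(x)(y)$; taking the supremum gives $N_i^{\mathscr{C}_j}(x)(y)$. For (3), the same expansion applied to $1_X$ splits the supremum over $\mathbb{U}$ into the indices $y\in X$, where $1_X(y)=1$ contributes $N_i^{\mathscr{C}_j}(x)(y)$, and the indices $y\notin X$, where $1_X(y)=0$ contributes $0$, yielding $\bigvee_{y\in X}N_i^{\mathscr{C}_j}(x)(y)$.

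For (2), I would note $1_{U-\{y\}}=(1_y)^c$, so Proposition \ref{P1}(1) gives $\underline{\mathfrak{C}}_{i,j}(1_{U-\{y\}})=\big(\overline{\mathfrak{C}}_{i,j}(1_y)\big)^c$, and substituting (1) produces $1-N_i^{\mathscr{C}_j}(x)(y)$. For (4), I would write $1_X=(1_{U\setminus X})^c$; the same duality gives $\underline{\mathfrak{C}}_{i,j}(1_X)=\big(\overline{\mathfrak{C}}_{i,j}(1_{U\setminus X})\big)^c$, and then (3) applied to $U\setminus X$ together with the De Morgan passage $1-\bigvee_{y\notin X}a_y=\bigwedge_{y\notin X}(1-a_y)$ yields the claimed formula. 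As a cross-check (and an alternative route avoiding Proposition \ref{P1}), each of (2) and (4) can be verified by expanding $\underline{\mathfrak{C}}_{i,j}$ directly: the factors indexed by the points where the crisp argument equals $1$ become $(1-N_i^{\mathscr{C}_j}(x)(z))\vee 1=1$ and so drop out of the infimum, leaving exactly the infimum over the remaining points.

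There is essentially no deep obstacle here; the result is a routine bookkeeping argument. The only points requiring care are keeping track of which summands or factors are forced to $0$ or $1$ under the crisp argument, and applying the sup/inf De Morgan duality correctly when transferring (1) to (2) and (3) to (4). I would also remark that the argument never uses any special structure of $N_i^{\mathscr{C}_j}$ beyond its being a fixed fuzzy neighborhood, so the identities hold uniformly for all $i\in\{1,2,3,4\}$ and $j\in\{0,1,2,3,4,5\}$, and the analogous statements for the $t$-norm-based operators $(\underline{\mathcal{C}}_{i,j}(X),\overline{\mathcal{C}}_{i,j}(X))$ follow verbatim.
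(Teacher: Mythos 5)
Your proposal is correct and follows essentially the same route as the paper: direct termwise expansion of the supremum for (1) and (3), and then (2) and (4) via the complementation duality $\underline{\mathfrak{C}}_{i,j}(X^c)=(\overline{\mathfrak{C}}_{i,j}(X))^c$ of Proposition \ref{P1}(1), which is exactly the ``duality'' the paper invokes. You merely spell out the duality steps (and a direct cross-check) that the paper leaves as obvious.
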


\section{Decision-making}\label{dm}
In this section, two series of new TOPSIS methods which are established by different foundations are put forward, in detail, the fuzzy covering-based rough set models defined in the previous section can be used to calculate the objective weights of attributes. The data processing procedures of our new ones for MADM problems are respectively discussed in Subsection \ref{ss61}. In order to demonstrate the practicality and effectiveness of our methods, we solve a medical issue with the new and the existing methodologies in the Subsection \ref{ss62} and compare them with each other in the last subsection.

\subsection{Processes and algorithms of new methods}\label{ss61}
In this subsection, we aim at combining MADM problems with the ONRFRS models and TNRFRS models. To achieve this objective, the background we need is described in the first part. Besides, the details and the algorithms for decision-making are depicted in the second part and last part, respectively.
\subsubsection{Background description}
Assume that $\mathbb{U}=\{x_t:t=1, \dots, n\}$ is a finite discrete set of decision objects and $\mathscr{C}=\{K_s:s=1,\dots, m\}$ indicates a non-empty finite set of attributes. Then, let $K_s(x_t)$ be the assessment of experts for the alternative, $x_t$, on the attribute, $K_s$. Particularly, for each object $x_t\in \mathbb{U}$, there is at least one attribute $K_s\in\mathscr{C}$ that satisfies $K_s(x_t)=1$. In addition, considering the principle of TOPSIS methodologies, we need to calculate the positive ideal solution (PIS), $\mathfrak{I}_{\uparrow}$, and the negative ideal solution (NIS), $\mathfrak{I}_{\downarrow}$, for each attribute.
\subsubsection{Decision-making processes}
Considering the indispensable role of fuzzy rough set theory in the decision-making process for uncertain information, we present a new objective attribute weight determination method, via uniting conventional TOPSIS methods with the models $(\underline{\mathfrak{C}}_{i,j}(X),\overline{\mathfrak{C}}_{i,j}(X))$ and $(\underline{\mathcal{C}}_{i,j}(X),\overline{\mathcal{C}}_{i,j}(X))$. Next, the thorough steps are described below.

For the first step, we need to construct the MADM matrix  by means of fuzzy information as shown in the Table \ref{ddddd}. It should be noted that each alternative $x_t$ has a scoring attribute of $1$.
\begin{table}[H]
\caption{{Muti-attribute decision-making matrix}}\label{ddddd}
	\begin{center}
		\begin{tabular}{l|llll}
			\hline
			$\mathbb{U}/ \mathscr{C}$ & $K_1$  & $K_2$ & $\cdots$ & $K_m$\\
			\hline
			\rule{0pt}{12pt}
			$x_1$  & $K_1(x_1)$ & $K_2(x_1)$ & $\cdots$ & $K_m(x_1)$\\
			\rule{0pt}{16pt}
			$x_2$& $K_1(x_2)$ & $K_2(x_2)$ & $\cdots$ & $K_m(x_2)$\\
			\rule{0pt}{16pt}
			$\vdots$ & $\vdots$ &$ \vdots$ & $\cdots$ & $\vdots$\\
			\rule{0pt}{16pt}
			$x_n$ & $K_1(x_n)$ & $K_2(x_n)$ & $\cdots$ & $K_m(x_n)$\\
			
			\hline
		\end{tabular}
	\end{center}
	
\end{table}

And then, the second step is to determine the PIS ${\mathfrak{I}_s}_{\uparrow}$ and the NIS ${\mathfrak{I}_s}_{\downarrow}$ under the attribute $K_s$. The following formulas reveal their calculation ways.
\begin{equation}
{\mathfrak{I}_s}_{\uparrow}=\left\{\begin{matrix}
		\vee_{1\leq t\leq n}K_s(x_t), & K_s\in \mathfrak{M}, s\in\{1,2,\dots, m\};\\
	\wedge_{1\leq t\leq n}K_s(x_t), & K_s\in \mathfrak{N}, s\in\{1,2,\dots, m\}.
		\end{matrix}\right.\label{ee1}
\end{equation}
\begin{equation}
{\mathfrak{I}_s}_{\downarrow}=\left\{\begin{matrix}
		\wedge_{1\leq t\leq n}K_s(x_t), & K_s\in \mathfrak{M}, s\in\{1,2,\dots, m\};\\
	\vee_{1\leq t\leq n}K_s(x_t), & K_s\in \mathfrak{N}, s\in\{1,2,\dots, m\}.
		\end{matrix}\right.\label{ee2}
\end{equation}
Where $\mathfrak{M}$ and $\mathfrak{N}$ respectively mean the set of benefit attributes and cost attributes.

For the third step, by means of the one-dimensional Euclidean distance formula, we can obtain $\mathfrak{D}_{st}^{\uparrow}$ which is the distance between the expert score $K_s(x_t)$ of the option $x_t$ under the attribute $K_s$ and the PIS ${\mathfrak{I}_s}_{\uparrow}$. Analogously, the distance, $\mathfrak{D}_{st}^{\downarrow}$, from $K_s(x_t)$ to NIS ${\mathfrak{I}_s}_{\downarrow}$ can also be computed. The following functions elaborate on the calculation process.
\begin{equation}
\mathfrak{D}_{st}^{\uparrow}=\|K_s(x_t)-{\mathfrak{I}_s}_{\uparrow}\|={\mathfrak{I}_s}_{\uparrow}-K_s(x_t).
\end{equation}
\begin{equation}
\mathfrak{D}_{st}^{\downarrow}=\|K_s(x_t)-{\mathfrak{I}_s}_{\downarrow}\|=K_s(x_t)-{\mathfrak{I}_s}_{\downarrow}.
\end{equation}

For the penultimate step, the calculation method for attribute weights is given. According to the definition of fuzzy set and the fuzzy information is shown in the Table \ref{ddddd}, $m$ fuzzy sets can be obtained via taking the score of each alternative $x_t$ under every attribute $K_s$ as the membership value of $x_t$ under the fuzzy set $K_s$. And the attribute fuzzy sets are expressed as below.
\begin{equation}
K_s=\sum_{t=1}^{n}\frac{K_s(x_t)}{x_t}, s\in\{1,2,\dots,m\}.
\end{equation}
Go further, for the purpose of determining the weight of each attribute fuzzy set $K_s$, we resort to the concept of approximate precision and there are two types of models available in its construction process. For the ONRFRS models displayed in Table \ref{t3}, the overlap function-based approximate precision of $K_s$ is defined as below.
\begin{equation}
\mathfrak{A}_{(\underline{\mathfrak{C}}_{*},\overline{\mathfrak{C}}_{*})}(K_s)=\frac{|\underline{\mathfrak{C}}_{*}(K_s)|}{|\overline{\mathfrak{C}}_{*}(K_s)|}, s\in \{1,2,\dots,m\}.
\end{equation}
Where $*$ refers to the index of ONRFRS models, and for each $K\in \mathscr{F}(\mathbb{U})$, $|K|$ represents the cardinality of $K$. Analogously, $t$-norm-based approximate precision of $K_s$ is given as follow,
\begin{equation}
\mathcal{A}_{(\underline{\mathcal{C}}_{*},\overline{\mathcal{C}}_{*})}(K_s)=\frac{|\underline{\mathcal{C}}_{*}(K_s)|}{|\overline{\mathcal{C}}_{*}(K_s)|}, s\in \{1,2,\dots,m\}.
\end{equation}
Thus, the attribute weight vector $\mathfrak{W}=\{\mathfrak{W}_1,\mathfrak{W}_2,\dots, \mathfrak{W}_m\}$ with respect to the $\mathfrak{A}_{(\underline{\mathfrak{C}}_{*},\overline{\mathfrak{C}}_{*})}$ and the attribute weight vector $\mathcal{W}=\{\mathcal{W}_1,\mathcal{W}_2,\dots, \mathcal{W}_m\}$ with respect to the $\mathcal{A}_{(\underline{\mathcal{C}}_{*},\overline{\mathcal{C}}_{*})}$ are defined as below, respectively.
\begin{equation}\label{wf1}
\mathfrak{W}_s=\frac{\mathfrak{A}_{(\underline{\mathfrak{C}}_{*},\overline{\mathfrak{C}}_{*})}(K_s)}{\sum_{s=1}^{m}\mathfrak{A}_{(\underline{\mathfrak{C}}_{*},\overline{\mathfrak{C}}_{*})}(K_s)},
s\in\{1,2,\dots,m\},
\end{equation}
and
\begin{equation}\label{wf2}
\mathcal{W}_s=\frac{\mathcal{A}_{(\underline{\mathcal{C}}_{*},\overline{\mathcal{C}}_{*})}(K_s)}{\sum_{s=1}^{m}\mathcal{A}_{(\underline{\mathcal{C}}_{*},\overline{\mathcal{C}}_{*})}(K_s)},
s\in\{1,2,\dots,m\}.
\end{equation}

For the final step, let
\begin{equation}\label{func14}
\mathfrak{H}^{\uparrow}=\min_{1\leq t\leq n}(\sum_{s=1}^{m}\mathfrak{W}_s\mathfrak{D}_{st}^{\uparrow}),
\mathfrak{H}^{\downarrow}=\max_{1\leq t\leq n}(\sum_{s=1}^{m}\mathfrak{W}_s\mathfrak{D}_{st}^{\downarrow}),
\end{equation}
or
\begin{equation}\label{func15}
\mathcal{H}^{\uparrow}=\min_{1\leq t\leq n}(\sum_{s=1}^{m}\mathcal{W}_s\mathfrak{D}_{st}^{\uparrow}),
\mathcal{H}^{\downarrow}=\max_{1\leq t\leq n}(\sum_{s=1}^{m}\mathcal{W}_s\mathfrak{D}_{st}^{\downarrow}).
\end{equation}
Depending on the type of weights chosen, the closeness coefficient of alternative $x_t$ can be defined in two separate forms as follows,
\begin{equation}
\mathfrak{H}_t=\frac{\sum_{s=1}^{m}\mathfrak{W}_s\mathfrak{D}_{st}^{\downarrow}}{\mathfrak{H}^{\downarrow}}-\frac{\sum_{s=1}^{m}\mathfrak{W}_s\mathfrak{D}_{st}^{\uparrow}}{\mathfrak{H}^{\uparrow}}, t\in\{1,2,\dots,n\},
\end{equation}
\begin{equation}
\mathcal{H}_t=\frac{\sum_{s=1}^{m}\mathcal{W}_s\mathfrak{D}_{st}^{\downarrow}}{\mathcal{H}^{\downarrow}}-\frac{\sum_{s=1}^{m}\mathcal{W}_s\mathfrak{D}_{st}^{\uparrow}}{\mathcal{H}^{\uparrow}}, t\in\{1,2,\dots,n\}.
\end{equation}
Where $\mathfrak{H}_t\leq0$ ($\mathcal{H}_t\leq0$), $t\in\{1,2,\dots,n\}$. According to the value of $\mathfrak{H}_t$ ($\mathcal{H}_t$), we can get the ranking of alternative targets, where the larger the value of $\mathfrak{H}_t$ ($\mathcal{H}_t$), the higher the priority of its corresponding alternative $x_t$.
\subsubsection{The algorithms}
In this part, we sort out four algorithms, which are respectively about the fuzzy minimal description, the fuzzy maximal description, the independent element and the above decision-making process.

The Algorithm \ref{amin} which describes how to calculate the fuzzy minimal description of $x_t$ from the fuzzy neighborhood system $\mathbb{C}(\mathscr{C}, x)$. Similarly, the algorithm about the fuzzy maximal description of $x_t$ are shown in Algorithm \ref{amax}. In order to establish the model $(\mathfrak{C}_M, \mathfrak{C}_M)$ and the model $(\mathcal{C}_M, \mathcal{C}_M)$, we need to find a way to calculate the independent element $\mathscr{C}_{\cap}$ in the fuzzy covering $\mathscr{C}$. Hence we give Algorithm \ref{ind} to our requirements. With the above guarantees, the decision-making process we discussed in the previous part is illustrated in the Algorithm \ref{dcm}.
\begin{algorithm}[p]
\caption{ The algorithm for calculating fuzzy minimal description}\label{amin} 
\hspace*{0.02in} {\bf Input:} 
A finite universe $\mathbb{U}=\{x_t:t=1,\dots, n\}$ and a finite fuzzy covering $\mathscr{C}=\{K_s:s=1,\dots,m\}$.\\
\hspace*{0.02in} {\bf Output:} 
The fuzzy minimal description $md(\mathscr{C},x_t)$ for each $x_t$.
\begin{algorithmic}[1]
\For{$t=1:n$} 
\For{$s=1:m$}
\State $q=0$; $c=0;$
\For{$p=1:m$}
\If{$K_s(x_t)=K_p(x_t)$}
\State $q++;$
\For{$a=1:n$}
\State  $b=0$;
\If{$K_s(x_a)\leq K_p(x_a)$}
\State $b++$;
\EndIf
\EndFor
\If{$b=1$}
\State $c++$;
\EndIf
\EndIf　

\EndFor
\If{$q=1 $ $| $ $|$ $ c=0$}
\State $K_s\in md(\mathscr{C},x_t)$;
\EndIf
\EndFor
\EndFor

\Return $md(\mathscr{C},x)$.
\end{algorithmic}
\end{algorithm}

\begin{algorithm}[htpb]
\caption{ {The algorithm for calculating fuzzy maximal description} }\label{amax} 
\hspace*{0.02in} {\bf Input:} 
A finite universe $\mathbb{U}=\{x_t:t=1,\dots, n\}$ and a finite fuzzy covering $\mathscr{C}=\{K_s:s=1,\dots,m\}$.\\
\hspace*{0.02in} {\bf Output:} 
The fuzzy maximal description $MD(\mathscr{C},x_t)$ for each $x_t$.
\begin{algorithmic}[1]
\For{$t=1:n$} 
\For{$s=1:m$}
\State $q=0$; $c=0;$
\For{$p=1:m$}
\If{$K_s(x_t)=K_p(x_t)$}
\State $q++;$
\For{$a=1:n$}
\State  $b=0$;
\If{$K_s(x_a)\geq K_p(x_a)$}
\State $b++$;
\EndIf
\EndFor
\If{$b=1$}
\State $c++$;
\EndIf
\EndIf　

\EndFor
\If{$q=1 $ $| $ $|$ $ c=0$}
\State $K_s\in MD(\mathscr{C},x_t)$;
\EndIf
\EndFor
\EndFor

\Return $MD(\mathscr{C},x)$.
\end{algorithmic}
\end{algorithm}

\begin{algorithm}[htpb]
\caption{{The algorithm for calculating independent element} }\label{ind} 
\hspace*{0.02in} {\bf Input:} 
A finite universe $\mathbb{U}=\{x_t:t=1,\dots, n\}$ and a finite fuzzy covering $\mathscr{C}=\{K_s:s=1,\dots,m\}$.\\
\hspace*{0.02in} {\bf Output:} 
The independent element $\mathscr{C}_{\cap}$ for $\mathscr{C}$.
\begin{algorithmic}[1]
\State $\mathscr{C}_{\cap}=\mathscr{C}$;
\For{$s=1:m$} 
\State $\mathscr{C}^{'}=\mathscr{C}\setminus K_s$; $\mathscr{C}^{'}\ni K^{'}_{s^{'}}$; $i=0$;
\For{$s^{'}=1:(m-1)$}
\State $j=0;$ $K=K^{'}_{s^{'}}$;
\While{$s^{'}+j\neq m-1$}
\For{$a=(s^{'}+j):(m-1)$}
\State $K=K\cap K^{'}_a$;
\If {$K=K_s$}
\State $i++;$
\EndIf
\EndFor
\State $j++$; $K=K^{'}_{s^{'}}$;
\EndWhile
\EndFor
\If{$i\neq 0$}
\State $\mathscr{C}_{\cap}=\mathscr{C}_{\cap}\setminus K_s$;
\EndIf
\EndFor
\Return $\mathscr{C}_{\cap}$.
\end{algorithmic}
\end{algorithm}
\begin{algorithm}[htpb]\scriptsize
\caption{{ Decision making} }\label{dcm} 
\hspace*{0.02in} {\bf Input:} 
Muti-attribute decision-making matrix, ONRFRS models and TNRFRS models.\\
\hspace*{0.02in} {\bf Output:} 
The ranking of alternatives.
\begin{algorithmic}[1]
\State ${\mathfrak{I}_s}_{\uparrow}=\emptyset$; ${\mathfrak{I}_s}_{\downarrow}=\emptyset$;
\For{$s=1:m$} 
\For{$t=1:n$}
\State {\bf compute:} the PIS and the NIS;
\If{$K_s\in \mathfrak{M}$}
\State ${\mathfrak{I}_s}_{\uparrow}\ni\vee_{1\leq t\leq n}K_s(x_t)$;
\State ${\mathfrak{I}_s}_{\downarrow}\ni\wedge_{1\leq t\leq n}K_s(x_t)$;
\EndIf
\If{$K_s\in \mathfrak{N}$}
\State ${\mathfrak{I}_s}_{\uparrow}\ni\wedge_{1\leq t\leq n}K_s(x_t)$;
\State ${\mathfrak{I}_s}_{\downarrow}\ni\vee_{1\leq t\leq n}K_s(x_t)$;
\EndIf
\EndFor
\EndFor
\State $\mathfrak{D}_{st}^{\uparrow}=\emptyset$; $\mathfrak{D}_{st}^{\downarrow}=\emptyset$;
\For{$s=1:m$} 
\State {\bf compute:} the distances;
\For{$t=1:n$}

\State $\mathfrak{D}_{st}^{\uparrow}\ni\|K_s(x_t)-{\mathfrak{I}_s}_{\uparrow}\|$;
\State $\mathfrak{D}_{st}^{\downarrow}\ni\|K_s(x_t)-{\mathfrak{I}_s}_{\downarrow}\|$;
\EndFor
\EndFor
\If{we use the ONRFRS models}
\State $\mathfrak{A}_{(\underline{\mathfrak{C}}_{*},\overline{\mathfrak{C}}_{*})}(K_s)=\emptyset;$
\State {\bf compute:} the overlap function-based approximate precision of $K_s$;
\For{$s=1:m$}
\State $\mathfrak{A}_{(\underline{\mathfrak{C}}_{*},\overline{\mathfrak{C}}_{*})}(K_s)\ni\frac{|\underline{\mathfrak{C}}_{*}(K_s)|}{|\overline{\mathfrak{C}}_{*}(K_s)|}$;
\EndFor
\State $\mathfrak{W}_s=\emptyset;$
\State {\bf compute:} the attribute weight vector;
\For{$s=1:m$}
\State $\mathfrak{W}_s\ni\frac{\mathfrak{A}_{(\underline{\mathfrak{C}}_{*},\overline{\mathfrak{C}}_{*})}(K_s)}{\sum_{s=1}^{m}\mathfrak{A}_{(\underline{\mathfrak{C}}_{*},\overline{\mathfrak{C}}_{*})}(K_s)}
$;
\EndFor
\State $\mathfrak{H}^{\uparrow}=\emptyset;$ $\mathfrak{H}^{\downarrow}=\emptyset;$
\For{$t=1:n$}
\State $\mathfrak{H}^{\uparrow}\ni\min_{1\leq t\leq n}(\sum_{s=1}^{m}\mathfrak{W}_s\mathfrak{D}_{st}^{\uparrow});$
\State $\mathfrak{H}^{\downarrow}\ni\max_{1\leq t\leq n}(\sum_{s=1}^{m}\mathfrak{W}_s\mathfrak{D}_{st}^{\downarrow});$
\EndFor
\State $\mathfrak{H}_t=\emptyset;$
\State {\bf compute:} the closeness coefficient;
\For{$t=1:n$}
\State $\mathfrak{H}_t\ni\frac{\sum_{s=1}^{m}\mathfrak{W}_s\mathfrak{D}_{st}^{\downarrow}}{\mathfrak{H}^{\downarrow}}-\frac{\sum_{s=1}^{m}\mathfrak{W}_s\mathfrak{D}_{st}^{\uparrow}}{\mathfrak{H}^{\uparrow}}$;
\EndFor
\EndIf
\If{we use the TNRFRS models}
\State $\mathcal{A}_{(\underline{\mathcal{C}}_{*},\overline{\mathcal{C}}_{*})}(K_s)=\emptyset;$
\State {\bf compute:} the $t$-norm-based approximate precision of $K_s$;
\For{$s=1:m$}
\State $\mathcal{A}_{(\underline{\mathcal{C}}_{*},\overline{\mathcal{C}}_{*})}(K_s)\ni\frac{|\underline{\mathcal{C}}_{*}(K_s)|}{|\overline{\mathcal{C}}_{*}(K_s)|}$;
\EndFor
\State $\mathcal{W}_s=\emptyset;$
\State {\bf compute:} the attribute weight vector;
\For{$s=1:m$}
\State $\mathcal{W}_s\ni\frac{\mathcal{A}_{(\underline{\mathcal{C}}_{*},\overline{\mathcal{C}}_{*})}(K_s)}{\sum_{s=1}^{m}\mathcal{A}_{(\underline{\mathcal{C}}_{*},\overline{\mathcal{C}}_{*})}(K_s)}
$;
\EndFor
\State $\mathcal{H}^{\uparrow}=\emptyset;$ $\mathcal{H}^{\downarrow}=\emptyset;$
\For{$t=1:n$}
\State $\mathcal{H}^{\uparrow}\ni\min_{1\leq t\leq n}(\sum_{s=1}^{m}\mathcal{W}_s\mathfrak{D}_{st}^{\uparrow});$
\State $\mathcal{H}^{\downarrow}\ni\max_{1\leq t\leq n}(\sum_{s=1}^{m}\mathcal{W}_s\mathfrak{D}_{st}^{\downarrow});$
\EndFor
\State $\mathcal{H}_t=\emptyset;$
\State {\bf compute:} the closeness coefficient;
\For{$t=1:n$}
\State $\mathcal{H}_t\ni\frac{\sum_{s=1}^{m}\mathcal{W}_s\mathfrak{D}_{st}^{\downarrow}}{\mathcal{H}^{\downarrow}}-\frac{\sum_{s=1}^{m}\mathcal{W}_s\mathfrak{D}_{st}^{\uparrow}}{\mathcal{H}^{\uparrow}}$;
\EndFor
\EndIf

\Return $\mathfrak{H}_t$ or $\mathcal{H}_t$.
\end{algorithmic}
\end{algorithm}
\subsection{Solutions for an example}\label{ss62}
In order to reduce the rejection of synthetic materials in the patient's body and to avoid environmental pollution from medical waste, biosynthetic nanomaterials technology based on bio-materials and nano-technology receives wide attention. In view of the shortcomings of traditional medical materials, clinicians are demanding more performance from new materials, properties such as non-chemical activity, non-carcinogenicity, non-allergic reaction, processability, sterility, anti-infectiousness, and good reactivity to in vivo tissues have become the new benchmark for measuring medical materials.

Bone transplantation is the primary treatment for repairing bone defects resulting from the occurrence of fractures and the clinician's choice of the desired biosynthetic nanomaterials can be translated into a MADM problem. Suppose now facing a bone transplantation surgery and the attending physician needs to select the most suitable one for the patient from the $5520$ bone transplant replacement materials provided by Johnson $\&$ Johnson China Ltd. and other companies. Let $\mathbf{U}=\{X_t: t=1,\dots, 5520\}$ be $5520$ alternatives which are the bone transplant replacement materials, and $\mathscr{C}=\{K_s: s=1,\dots, 15\}$ be $15$ benefit attributes, such as non-chemical activity, non-carcinogenicity, non-allergic reaction, processability, sterility, anti-infectiousness, good reactivity to in vivo tissues and so on. By means of the above indication, the bone transplant replacement material selection problem is described as a multi-attribute decision problem.

The clinician's evaluation value of the alternative with respect to the attributes are shown in \cite{zhang2019topsis}, and Zhang et al. transformed them into fuzzy numbers in their paper. For the sake of simplifying the calculation, we selected $15$ bone transplant replacement materials without loss of generality from the original set to form the sample data set $\mathbb{U}=\{x_t: t=1,\dots, 15\}$, and the MADM matrix are illustrated in Table \ref{aaaa}.
\begin{sidewaystable}[thp]
\caption{{Muti-attribute decision-making matrix with fuzzy information}}\label{aaaa}
	\begin{center}
\resizebox{.95\columnwidth}{!}{
\begin{tabular}{l|lllllllllllllll}
			\hline
			$\mathbb{U}/ \mathscr{C}$ & $K_1$  & $K_2$ & $K_3$ & $K_4$ & $K_5$ & $K_6$ & $K_7$ & $K_8$ & $K_9$ & $K_{10}$ & $K_{11}$ & $K_{12}$ & $K_{13}$ & $K_{14}$ & $K_{15}$ \\
			\hline
			\rule{0pt}{12pt}
			$x_1$ & $0.46$  & $0.68$ & $0.11$ & $0.99$ & $0.16$ & $0.63$ & $1$ & $0.78$ & $0.71$ & $0.5$ & $0.29$ & $0.41$ & $0.12$ & $0.5$ & $0.02$\\
			\rule{0pt}{12pt}
			$x_2$ & $1$  & $0.13$ & $0.93$ & $0.17$ & $0.67$ & $0.36$ & $0.07$ & $0.69$ & $ 0.62$ & $ 0.49$ & $0.54$ & $ 0.67$ & $ 0.81$ & $0.61$ & $0.44$\\
			\rule{0pt}{12pt}
			$x_3$ & $1$  & $0.72$ & $0.19$ & $0.26$ & $0.89$ & $1$ & $0.94$ & $0.01$ & $0.34$ & $0.88$ & $0.98$ & $0.93$ & $0.32$ & $0.82$ & $0.83$\\
		\rule{0pt}{12pt}
			$x_4$ & $0.33$ &$0.11$&$ 0.27$&$ 0.4$&$ 0.52$&$ 0.22$&$0.02$&$ 0.84$&$ 0.94$&$ 0.35$&$ 0.72$&$ 1$&$ 0.25$&$ 0.53$&$ 0.62$\\
			\rule{0pt}{12pt}
			$x_5$ & $0.3$&$ 0.12$&$ 0.8$&$ 0.07$&$ 0.7$&$ 0.65$&$ 0.68$&$ 0.92$&$ 1$&$ 0.45$&$ 0.84$&$ 0.48$&$ 0.34$&$ 0.2$&$ 0.52$\\
\rule{0pt}{12pt}
			$x_6$ & $0.06$&$ 0.64$&$ 0.49$&$ 0.68$&$ 1$&$ 0.6$&$ 0.78$&$ 0.77$&$ 0.73$&$ 0.96$&$ 0.43$&$ 0.76$&$ 0.38$&$ 0.45$&$ 0.86$\\
\rule{0pt}{12pt}
			$x_7$ & $ 0.3$&$ 0.33$&$ 0.77$&$ 0.4$&$ 0.95$&$ 0.39$&$ 0.53$&$ 0.04$&$ 0.65$&$ 1$&$ 0.47$&$ 0.42$&$ 0.55$&$ 0.43$&$ 0.1$\\
\rule{0pt}{12pt}
			$x_8$ & $0.63$&$ 0.74$&$ 0.67$&$ 0.15$&$ 0.81$&$ 0.18$&$ 0.14$&$ 1$&$ 0.84$&$ 0.59$&$ 0.5$&$ 0.39$&$ 0.52$&$ 0.72$&$ 0.14$\\
			\rule{0pt}{12pt}
			$x_9$ & $0.09$&$ 0.23$&$ 1$&$ 0.38$&$ 0.75$&$ 0.73$&$ 0.22$&$ 0.27$&$ 0.32$&$ 0.68$&$ 0.65$&$ 0.45$&$ 0.66$&$ 0.35$&$ 0.56$\\
\rule{0pt}{12pt}
			$x_{10}$ & $1$&$ 0.87$&$ 0.06$&$ 0.87$&$ 0.33$&$ 0.73$&$ 0.11$&$ 0.62$&$ 0.55$&$ 0.81$&$ 0.48$&$ 0.88$&$ 0.4$&$ 1$&$ 0.85$\\
\rule{0pt}{12pt}
			$x_{11}$ & $0.67$&$ 1$&$ 0.66$&$ 0.45$&$ 0.93$&$ 0.64$&$ 0.77$&$ 0.49$&$ 0.33$&$ 0.26$&$ 0.94$&$ 0.76$&$ 0.54$&$ 0.78$&$ 0.08$\\
\rule{0pt}{12pt}
			$x_{12}$ & $0.74$&$ 0.09$&$ 0.25$&$ 1$&$ 0.42$&$ 0.62$&$ 0.08$&$ 0.1$&$ 0.35$&$ 0.69$&$ 0.52$&$ 0.69$&$ 0.89$&$ 0.02$&$ 0.37$\\
\rule{0pt}{12pt}
			$x_{13}$ & $0.6$&$ 0.77$&$ 0.72$&$ 0.42$&$ 0.16$&$ 0.18$&$ 0.34$&$ 0.19$&$ 0.84$&$ 0.33$&$ 1 $&$0.88$&$ 0.1$&$ 0.73$&$ 0.66$\\
\rule{0pt}{12pt}
			$x_{14}$ & $0.69$&$ 0.48$&$ 0.1$&$ 0.12$&$ 0.75$&$ 0.63$&$ 0.41$&$ 1$&$ 0.28$&$ 0.93$&$ 0.94$&$ 0.28$&$ 1$&$ 0.11$&$ 0.94$\\
\rule{0pt}{12pt}
			$x_{15}$ & $0.01$&$ 0.37$&$ 0.01$&$ 0.34$&$ 0.92$&$ 0.37$&$ 0.68$&$ 0.18$&$ 0.69$&$ 0.12$&$ 0.56$&$ 0.21$&$ 0.3$&$ 0.45$&$ 1$\\
			\hline
		
		\end{tabular}}
	\end{center}
	
\end{sidewaystable}

No loss of generality, we use the model $(\underline{\mathfrak{C}}_{A1}(X)$, $\overline{\mathfrak{C}}_{A1}(X))$ and the model $(\underline{\mathcal{C}}_{A1}(X)$, $\overline{\mathcal{C}}_{A1}(X))$ as an example to show the whole decision process, and the decision results of other models are displayed in the Table \ref{ranking1}.

{\bf Step 1:} According to the scoring by medical experts, we can obtain the PIS ${\mathfrak{I}_s}_{\uparrow}$ and NIS ${\mathfrak{I}_s}_{\downarrow}$ through formula \ref{ee1} and formula \ref{ee2}, and the calculated conclusion are depicted in the Table \ref{a5a5}.
\begin{table}[H]\small
	\begin{center}
\caption{The positive ideal solution ${\mathfrak{I}_s}_{\uparrow}$ and the negative ideal solution ${\mathfrak{I}_s}_{\downarrow}$}\label{a5a5}
\resizebox{.95\columnwidth}{!}{
\begin{tabular}{l|lllllllllllllll}
			\hline
			$\quad$ & $K_1$  & $K_2$ & $K_3$ & $K_4$ & $K_5$ & $K_6$ & $K_7$ & $K_8$ & $K_9$ & $K_{10}$ & $K_{11}$ & $K_{12}$ & $K_{13}$ & $K_{14}$ & $K_{15}$ \\
			\hline
			\rule{0pt}{12pt}
			${\mathfrak{I}_s}_{\uparrow}$ & $1$  & $1$ & $1$ & $1$ & $1$ & $1$ & $1$ & $1$ & $1$ & $1$ & $1$ & $1$ & $1$ & $1$ & $1$\\
			\rule{0pt}{12pt}
			${\mathfrak{I}_s}_{\downarrow}$ & $0.01$  & $0.09$ & $0.01$ & $0.07$ & $0.16$ & $0.18$ & $0.02$ & $0.01$ & $ 0.28$ & $ 0.12$ & $0.29$ & $ 0.21$ & $ 0.1$ & $0.02$ & $0.02$\\
			\hline
		
		\end{tabular}}
	\end{center}
	
\end{table}

{\bf Step 2:} By means of Euclidean distance equation, the distances $\mathfrak{D}_{st}^{\uparrow}$ and $\mathfrak{D}_{st}^{\downarrow}$ are given in the Table \ref{top} and \ref{down}. Note that, each element in Table \ref{top} indicates a positive ideal distance. Specifically, the data value, $\mathfrak{D}_{11}^{\uparrow}=0.54$, means the alternative $x_1$ evaluated value under the attribute $K_1$ away from PIS of $K_1$ is $0.54$. Likewise, each data in Table \ref{down} has a similar meaning that represents a negative ideal distance.
\begin{table}[H]
	\begin{center}
\caption{The distance $\mathfrak{D}_{st}^{\uparrow}$ from the attribute value $K_s(x_t)$ to the PIS ${\mathfrak{I}_s}_{\uparrow}$}\label{top}
\resizebox{.95\columnwidth}{!}{
\begin{tabular}{l|lllllllllllllll}
			\hline
			$\mathfrak{D}_{st}^{\uparrow}$ & $K_1$  & $K_2$ & $K_3$ & $K_4$ & $K_5$ & $K_6$ & $K_7$ & $K_8$ & $K_9$ & $K_{10}$ & $K_{11}$ & $K_{12}$ & $K_{13}$ & $K_{14}$ & $K_{15}$ \\
			\hline
			\rule{0pt}{12pt}
			$x_1$ & $0.54$  & $0.32$ & $0.89$ & $0.01$ & $0.84$ & $0.37$ & $0$ & $0.22$ & $0.29$ & $0.5$ & $0.71$ & $0.59$ & $0.88$ & $0.5$ & $0.98$\\
			\rule{0pt}{12pt}
			$x_2$ & $0$  & $0.87$ & $0.07$ & $0.83$ & $0.33$ & $0.64$ & $0.93$ & $0.31$ & $ 0.38$ & $ 0.51$ & $0.46$ & $ 0.33$ & $ 0.19$ & $0.39$ & $0.56$\\
			\rule{0pt}{12pt}
			$x_3$ & $0$  & $0.28$ & $0.81$ & $0.74$ & $0.11$ & $0$ & $0.06$ & $0.99$ & $0.66$ & $0.12$ & $0.02$ & $0.07$ & $0.68$ & $0.18$ & $0.17$\\
		\rule{0pt}{12pt}
			$x_4$ & $0.67$ &$0.89$&$ 0.73$&$ 0.6$&$ 0.48$&$ 0.78$&$0.98$&$ 0.16$&$ 0.06$&$ 0.65$&$ 0.28$&$ 0$&$ 0.75$&$ 0.47$&$ 0.38$\\
			\rule{0pt}{12pt}
			$x_5$ & $0.7$&$ 0.88$&$ 0.2$&$ 0.93$&$ 0.3$&$ 0.35$&$ 0.32$&$ 0.08$&$ 0$&$ 0.55$&$ 0.16$&$ 0.52$&$ 0.66$&$ 0.8$&$ 0.48$\\
\rule{0pt}{12pt}
			$x_6$ & $0.94$&$ 0.36$&$ 0.51$&$ 0.32$&$ 0$&$ 0.4$&$ 0.22$&$ 0.23$&$ 0.27$&$ 0.04$&$ 0.57$&$ 0.24$&$ 0.62$&$ 0.55$&$ 0.14$\\
\rule{0pt}{12pt}
			$x_7$ & $ 0.7$&$ 0.67$&$ 0.23$&$ 0.6$&$ 0.05$&$ 0.61$&$ 0.47$&$ 0.96$&$ 0.35$&$ 0$&$ 0.53$&$ 0.58$&$ 0.45$&$ 0.57$&$ 0.9$\\
\rule{0pt}{12pt}
			$x_8$ & $0.37$&$ 0.26$&$ 0.33$&$ 0.85$&$ 0.19$&$ 0.82$&$ 0.86$&$ 0$&$ 0.16$&$ 0.41$&$ 0.5$&$ 0.61$&$ 0.48$&$ 0.28$&$ 0.86$\\
			\rule{0pt}{12pt}
			$x_9$ & $0.91$&$ 0.77$&$ 0$&$ 0.62$&$ 0.25$&$ 0.27$&$ 0.78$&$ 0.73$&$ 0.68$&$ 0.32$&$ 0.35$&$ 0.55$&$ 0.34$&$ 0.65$&$ 0.44$\\
\rule{0pt}{12pt}
			$x_{10}$ & $0$&$ 0.13$&$ 0.94$&$ 0.13$&$ 0.67$&$ 0.27$&$ 0.89$&$ 0.38$&$ 0.45$&$ 0.19$&$ 0.52$&$ 0.12$&$ 0.6$&$ 0$&$ 0.15$\\
\rule{0pt}{12pt}
			$x_{11}$ & $0.33$&$ 0$&$ 0.34$&$ 0.55$&$ 0.07$&$ 0.36$&$ 0.23$&$ 0.51$&$ 0.67$&$ 0.74$&$ 0.06$&$ 0.24$&$ 0.46$&$ 0.22$&$ 0.92$\\
\rule{0pt}{12pt}
			$x_{12}$ & $0.26$&$ 0.91$&$ 0.75$&$ 0$&$ 0.58$&$ 0.38$&$ 0.92$&$ 0.9$&$ 0.65$&$ 0.31$&$ 0.48$&$ 0.31$&$ 0.11$&$ 0.98$&$ 0.63$\\
\rule{0pt}{12pt}
			$x_{13}$ & $0.4$&$ 0.23$&$ 0.28$&$ 0.58$&$ 0.84$&$ 0.82$&$ 0.66$&$ 0.81$&$ 0.16$&$ 0.67$&$ 0 $&$0.12$&$ 0.9$&$ 0.27$&$ 0.34$\\
\rule{0pt}{12pt}
			$x_{14}$ & $0.31$&$ 0.52$&$ 0.9$&$ 0.88$&$ 0.25$&$ 0.37$&$ 0.59$&$ 0$&$ 0.72$&$ 0.07$&$ 0.06$&$ 0.72$&$ 0$&$ 0.89$&$ 0.06$\\
\rule{0pt}{12pt}
			$x_{15}$ & $0.99$&$ 0.63$&$ 0.99$&$ 0.66$&$ 0.08$&$ 0.63$&$ 0.32$&$ 0.82$&$ 0.31$&$ 0.88$&$ 0.44$&$ 0.79$&$ 0.7$&$ 0.55$&$ 0$\\
			\hline
		
		\end{tabular}}
	\end{center}
	
\end{table}
\begin{table}[H]
	\begin{center}\caption{The distance $\mathfrak{D}_{st}^{\downarrow}$ from the attribute value $K_s(x_t)$ to the NIS ${\mathfrak{I}_s}_{\downarrow}$}\label{down}
\resizebox{.95\columnwidth}{!}{
\begin{tabular}{l|lllllllllllllll}
			\hline
			$\mathfrak{D}_{st}^{\downarrow}$ & $K_1$  & $K_2$ & $K_3$ & $K_4$ & $K_5$ & $K_6$ & $K_7$ & $K_8$ & $K_9$ & $K_{10}$ & $K_{11}$ & $K_{12}$ & $K_{13}$ & $K_{14}$ & $K_{15}$ \\
			\hline
			\rule{0pt}{12pt}
			$x_1$ & $0.45$&$	0.59$&$	0.1$&$	0.92$&$	0$&$	0.45$&$	0.98$&$	0.77$&$	0.43$&$	0.38$&$	0$&$	0.2$&$	0.02$&$	0.48$&$	0$\\
			\rule{0pt}{12pt}
			$x_2$ & $0.99$&$	0.04$&$	0.92$&$	0.1$&$	0.51$&$	0.18$&$	0.05$&$	0.68$&$	0.34$&$	0.37$&$	0.25$&$	0.46$&$	0.71$&$	0.59$&$	0.42$\\
			\rule{0pt}{12pt}
			$x_3$ & $0.99$&$	0.63$&$	0.18$&$	0.19$&$	0.73$&$	0.82$&$	0.92$&$	0$&$	0.06$&$	0.76$&$	0.69$&$	0.72$&$	0.22$&$	0.8$&$	0.81$\\
		\rule{0pt}{12pt}
			$x_4$ & $0.32$&$	0.02$&$	0.26$&$	0.33$&$	0.36$&$	0.04$&$	0$&$	0.83$&$	0.66$&$	0.23$&$	0.43$&$	0.79$&$	0.15$&$	0.51$&$	0.6$\\
			\rule{0pt}{12pt}
			$x_5$ & $0.29	$&$0.03	$&$0.79	$&$0	$&$0.54	$&$0.47	$&$0.66	$&$0.91	$&$0.72	$&$0.33	$&$0.55	$&$0.27$&$	0.24$&$	0.18$&$	0.5$\\
\rule{0pt}{12pt}
			$x_6$ & $0.05$&$	0.55$&$	0.48$&$	0.61$&$	0.84$&$	0.42$&$	0.76$&$	0.76$&$	0.45$&$	0.84$&$	0.14$&$	0.55$&$	0.28$&$	0.43$&$	0.84$\\
\rule{0pt}{12pt}
			$x_7$ & $0.29$ & $	0.24$ & $	0.76$ & $	0.33$ & $	0.79$ & $	0.21$ & $	0.51$ & $	0.03$ & $	0.37$ & $	0.88$ & $	0.18$ & $	0.21$ & $	0.45$ & $	0.41$ & $	0.08$\\
\rule{0pt}{12pt}
			$x_8$ & $0.62$ & $	0.65$ & $	0.66$ & $	0.08$ & $	0.65$ & $	0$ & $	0.12$ & $	0.99$ & $	0.56$ & $	0.47$ & $	0.21$ & $	0.18$ & $	0.42$ & $	0.7$ & $	0.12$\\
			\rule{0pt}{12pt}
			$x_9$ & $0.08$ & $	0.14$ & $	0.99$ & $	0.31$ & $	0.59$ & $	0.55$ & $	0.2$ & $	0.26$ & $	0.04$ & $	0.56$ & $	0.36$ & $	0.24$ & $	0.56$ & $	0.33$ & $	0.54$\\
\rule{0pt}{12pt}
			$x_{10}$ & $0.99$ & $	0.78$ & $	0.05$ & $0.8$ & $	0.17$ & $	0.55$ & $	0.09$ & $	0.61$ & $	0.27$ & $	0.69$ & $	0.19$ & $	0.67$ & $	0.3$ & $	0.98$ & $	0.83$\\
\rule{0pt}{12pt}
			$x_{11}$ & $0.66$ & $	0.91$ & $	0.65$ & $	0.38$ & $	0.77$ & $	0.46$ & $	0.75$ & $	0.48$ & $	0.05$ & $	0.14$ & $	0.65$ & $	0.55$ & $	0.44$ & $	0.76$ & $	0.06$\\
\rule{0pt}{12pt}
			$x_{12}$ & $0.73$ & $	0$ & $	0.24$ & $0	0.93$ & $	0.26$ & $	0.44$ & $ 0.06$ & $ 0.09$ & $	0.07$ & $	0.57$ & $	0.23$ & $	0.48$ & $	0.79$ & $	0	$ & $0.35$\\
\rule{0pt}{12pt}
			$x_{13}$ & $0.59$ & $0.68$ & $	0.71$ & $	0.35$ & $	0$ & $	0$ & $	0.32$ & $	0.18$ & $	0.56$ & $	0.21$ & $	0.71$ & $	0.67$ & $	0$ & $	0.71$ & $	0.64$\\
\rule{0pt}{12pt}
			$x_{14}$ & $0.68$ & $	0.39$ & $	0.09$ & $	0.05$ & $	0.59$ & $	0.45$ & $	0.39$ & $	0.99$ & $0$ & $	0.81$ & $	0.65$ & $0.07$ & $	0.9$ & $	0.09$ & $	0.92$\\
\rule{0pt}{12pt}
			$x_{15}$ & $0$ & $	0.28$ & $	0$ & $	0.27$ & $0.76$ & $	0.19$ & $	0.66$ & $ 0.17$ & $	0.41$ & $	0$ & $	0.27$ & $	0$ & $	0.2$ & $	0.43$ & $	0.98$\\
			\hline
		
		\end{tabular}}
	\end{center}

\end{table}

{\bf Step 3:} In light of the model $(\underline{\mathfrak{C}}_{A1}(X)$, $\overline{\mathfrak{C}}_{A1}(X))$ and the model $(\underline{\mathcal{C}}_{A1}(X)$, $\overline{\mathcal{C}}_{A1}(X))$, the different based attribute weight vectors $\mathfrak{W}$ and $\mathcal{W}$ can be obtained. In which, the non-associative overlap function $O_D(x,y)=x^2y^2$ that satisfies the condition (O7) and its $R_O$-implication  $I_{O_{D}}=\left\{\begin{matrix}
		1, & \text{if}$\ $ x^2\leq y;\\
		\sqrt{\frac{y}{x^2}}, & \text{if}$\ $ x^2>y.
	\end{matrix}\right.$ are used to construct the ONRFRS models, and the left-continuous $t$-norm $\mathscr{T}_D=xy$ and its $R$-implication $\mathscr{I}_{\mathscr{T}_{D}}=\left\{\begin{matrix}
		1, & \text{if}$\ $ x\leq y;\\
		\frac{y}{x}, & \text{if}$\ $ x>y.
	\end{matrix}\right.$ are used to construct the TNRFRS models. First of all, we calculate diverse types of approximate space for each attribute set, and the low approximation sets are shown in Table \ref{ol} and Table \ref{tl}, respectively. Besides, the upper approximation sets are shown in Table \ref{ou} and Table \ref{tu} as well.
\begin{sidewaystable}[thp]
	\begin{center}\caption{The overlap function-based lower approximation of attribute fuzzy set $K_s(x_t)$}\label{ol}
\resizebox{.95\columnwidth}{!}{
\begin{tabular}{l|lllllllllllllll}
			\hline
			$\quad$ & $x_1$  & $x_2$ & $x_3$ & $x_4$ & $x_5$ & $x_6$ & $x_7$ & $x_8$ & $x_9$ & $x_{10}$ & $x_{11}$ & $x_{12}$ & $x_{13}$ & $x_{14}$ & $x_{15}$ \\
			\hline
			\rule{0pt}{10pt}
			$\underline{\mathfrak{C}}_{A_1}(K_1(x_t))$ & $0.4600$ & $	0.4412$ & $	0.5188$ & $	0.2577$ & $	0.2193$ & $	0.0600$ & $	0.1753$ & $	0.5061$ & $	0.0900$ & $	0.6000$ & $	0.5522$ & $	0.3675$ & $	0.5000$ & $	0.4169$ & $	0.0100$\\
			\rule{0pt}{16pt}
			$\underline{\mathfrak{C}}_{A_1}(K_2(x_t))$ & $0.5309$ & $	0.1300$ & $	0.5188$ & $	0.1100$ & $	0.1200$ & $	0.3986$ & $	0.2300$ & $	0.5127$ & $	0.1780$ & $	0.5855$ & $	0.5522$ & $	0.0900$ & $	0.5000$ & $	0.3546$ & $	0.2788$ \\
			\rule{0pt}{16pt}
			$\underline{\mathfrak{C}}_{A_1}(K_3(x_t))$ & $0.1100$ & $	0.4412$ & $	0.1900$ & $	0.2110$ & $	0.4900$ & $	0.4900$ & $	0.4339$ & $	0.5061$ & $	0.4803$ & $	0.0600$ & $	0.5698$ & $	0.2500$ & $	0.4736$ & $	0.1000$ & $	0.0100$\\
		\rule{0pt}{16pt}
			$\underline{\mathfrak{C}}_{A_1}(K_4(x_t))$ & $0.4411$ & $	0.1700$ & $	0.2600$ & $	0.1814$ & $	0.0700$ & $	0.3986$ & $	0.2940$ & $	0.1500$ & $	0.1780$ & $	0.4200$ & $	0.4500$ & $	0.4000$ & $	0.4200$ & $	0.1200$ & $	0.2629$\\
			\rule{0pt}{16pt}
			$\underline{\mathfrak{C}}_{A_1}(K_5(x_t))$ & $0.1600$ & $	0.5200$ & $	0.5757$ & $	0.3300$ & $	0.5262$ & $	0.5747$ & $	0.5789$ & $	0.5061$ & $	0.5200$ & $	0.3300$ & $	0.5698$ & $	0.3300$ & $	0.1600$ & $0.3675$ & $	0.5122$\\
\rule{0pt}{16pt}
			$\underline{\mathfrak{C}}_{A_1}(K_6(x_t))$ & $0.4411$ & $	0.2062$ & $	0.5757$ & $	0.2200$ & $	0.4497$ & $	0.5649$ & $	0.2940$ & $	0.1800$ & $	0.3600$ & $	0.4188$ & $	0.5140$ & $	0.3900$ & $	0.1800$ & $0.3600$ & $	0.3700$\\
\rule{0pt}{16pt}
			$\underline{\mathfrak{C}}_{A_1}(K_7(x_t))$ & $0.4411$ & $	0.0700$ & $	0.5757$ & $	0.0200$ & $0.4352$ & $	0.3986$ & $	0.2200$ & $	0.1400$ & $	0.1780$ & $	0.1100$ & $	0.5140$ & $	0.0800$ & $	0.3400$ & $	0.3546$ & $	0.3102$\\
\rule{0pt}{16pt}
			$\underline{\mathfrak{C}}_{A_1}(K_8(x_t))$ & $0.4411$ & $	0.4900$ & $	0.0100$ & $	0.3982$ & $	0.4352$ & $	0.3986$ & $	0.0400$ & $	0.4900$ & $	0.2700$ & $	0.4188$ & $	0.4900$ & $	0.1000$ & $	0.1900$ & $	0.5652$ & $	0.1800$\\
			\rule{0pt}{16pt}
			$\underline{\mathfrak{C}}_{A_1}(K_9(x_t))$ & $0.3300$ & $	0.3571$ & $	0.3400$ & $	0.3982$ & $	0.4352$ & $	0.3986$ & $	0.3200$ & $0.3300$ & $	0.3200$ & $	0.5500$ & $0.3300$ & $	0.3300$ & $	0.4736$ & $	0.2800$ & $	0.2800$\\
\rule{0pt}{16pt}
			$\underline{\mathfrak{C}}_{A_1}(K_{10}(x_t))$ & $0.3224$ & $0.3571$ & $	0.5188$ & $	0.3500$ & $	0.4500$ & $	0.5900$ & $	0.4500$ & $	0.3161$ & $	0.4500$ & $	0.4188$ & $	0.2600$ & $	0.3291$ & $	0.3300$ & $	0.4500$ & $	0.1200$\\
\rule{0pt}{16pt}
			$\underline{\mathfrak{C}}_{A_1}(K_{11}(x_t))$ & $0.2900$ & $	0.5000$ & $	0.6019$ & $ 0.4300$ & $	0.4300$ & $	0.4300$ & $	0.4300$ & $	0.5000$ & $	0.4300$ & $0	0.4800$ & $	0.5140$ & $	0.4700$ & $	0.5000$ & $	0.4800$ & $	0.4300$\\
\rule{0pt}{16pt}
			$\underline{\mathfrak{C}}_{A_1}(K_{12}(x_t))$ & $0.4100$ & $0.3900$ & $	0.5188$ & $	0.3965$ & $	0.4497$ & $	0.4500$ & $	0.3900$ & $	0.3900$ & $0.4188$ & $	0.5598$ & $	0.5140$ & $	0.4200$ & $	0.4330$ & $	0.2800$ & $	0.2100$\\
\rule{0pt}{16pt}
			$\underline{\mathfrak{C}}_{A_1}(K_{13}(x_t))$ & $0.1200$ & $	0.4412$ & $	0.3200$ & $	0.2500$ & $	0.3400$ & $	0.3800$ & $	0.3400$ & $	0.5061$ & $	0.3400$ & $	0.4000$ & $	0.5200$ & $	0.4000$ & $	0.1000$ & $	0.4000$ & $	0.3000$\\
\rule{0pt}{16pt}
			$\underline{\mathfrak{C}}_{A_1}(K_{14}(x_t))$ & $0.4675$ & $	0.4522$ & $	0.5188$ & $ 0.3385$ & $	0.2000$ & $	0.3986$ & $	0.3385$ & $	0.5061$ & $	0.3037$ & $	0.6100$ & $	0.5522$ & $	0.0200$ & $	0.5000$ & $	0.1100$ & $	0.2629$\\
\rule{0pt}{16pt}
			$\underline{\mathfrak{C}}_{A_1}(K_{15}(x_t))$ & $0.0200$ & $0.2062$ & $	0.5200$ & $	0.3965$ & $0.4497$ & $	0.5600$ & $	0.1000$ & $	0.1400$ & $	0.4188$ & $	0.5598$ & $	0.0800$ & $	0.3291$ & $	0.4330$ & $	0.4400$ & $	0.5200$\\
			\hline
		
		\end{tabular}}
	\end{center}
	
\end{sidewaystable}
\begin{sidewaystable}[thp]
	\begin{center}\caption{The $t$-norm-based lower approximation of attribute fuzzy set $K_s(x_t)$}\label{tl}
\resizebox{.95\columnwidth}{!}{
\begin{tabular}{l|lllllllllllllll}
			\hline
			$\quad$ & $x_1$  & $x_2$ & $x_3$ & $x_4$ & $x_5$ & $x_6$ & $x_7$ & $x_8$ & $x_9$ & $x_{10}$ & $x_{11}$ & $x_{12}$ & $x_{13}$ & $x_{14}$ & $x_{15}$ \\
			\hline
			\rule{0pt}{10pt}
			$\underline{\mathcal{C}}_{A_1}(K_1(x_t))$ & $0.4600$ & $	0.6818$ & $	0.8202$ & $	0.3300$ & $	0.3000$ & $	0.0600$ & $	0.3000$ & $	0.6300$ & $	0.0900$ & $	0.7534$ & $	0.6700$ & $	0.7297$ & $	0.6000$ & $0.6900$ & $0.0100$\\
			\rule{0pt}{16pt}
			$\underline{\mathcal{C}}_{A_1}(K_2(x_t))$ & $0.6800$ & $	0.1300$ & $	0.7200$ & $	0.1100$ & $	0.1200$ & $	0.6400$ & $	0.3300$ & $	0.7400$ & $	0.2300$ & $	0.7700$ & $	0.8181$ & $	0.0900$ & $	0.7700$ & $	0.4800$ & $	0.3700$ \\
			\rule{0pt}{16pt}
			$\underline{\mathcal{C}}_{A_1}(K_3(x_t))$ & $0.1100$ & $0.6818$ & $	0.1900$ & $	0.2700$ & $	0.7934$ & $	0.4900$ & $	0.7358$ & $	0.6600$ & $	0.5100$ & $	0.0600$ & $	0.6600$ & $	0.2500$ & $	0.7200$ & $	0.1000$ & $	0.0100$\\
		\rule{0pt}{16pt}
			$\underline{\mathcal{C}}_{A_1}(K_4(x_t))$ & $0.5454$ & $	0.1700$ & $	0.2600$ & $	0.4000$ & $	0.0700$ & $0.6800$ & $	0.4000$ & $	0.1500$ & $	0.3800$ & $	0.7534$ & $	0.4500$ & $	0.7297$ & $	0.4200$ & $	0.1200$ & $	0.3400$\\
			\rule{0pt}{16pt}
			$\underline{\mathcal{C}}_{A_1}(K_5(x_t))$ & $0.1600$ & $0.6700$ & $	0.8202$ & $	0.5200$ & $	0.7000$ & $	0.7500$ & $	0.7500$ & $	0.8100$ & $	0.6818$ & $	0.3300$ & $	0.8181$ & $	0.4200$ & $	0.1600$ & $	0.7317$ & $	0.7500$\\
\rule{0pt}{16pt}
			$\underline{\mathcal{C}}_{A_1}(K_6(x_t))$ & $0.6300$ & $	0.3600$ & $	0.8202$ & $	0.2200$ & $	0.6500$ & $	0.6000$ & $0.3900$ & $	0.1800$ & $	0.6000$ & $	0.7300$ & $	0.6400$ & $	0.6200$ & $	0.1800$ & $	0.6300$ & $	0.3700$\\
\rule{0pt}{16pt}
			$\underline{\mathcal{C}}_{A_1}(K_7(x_t))$ & $0.7564$ & $	0.0700$ & $	0.8202$ & $	0.0200$ & $0.6800$ & $0.7179$ & $	0.5300$ & $0.1400$ & $	0.2200$ & $	0.1100$ & $	0.7700$ & $	0.0800$ & $	0.3400$ & $	0.4100$ & $	0.6764$\\
\rule{0pt}{16pt}
			$\underline{\mathcal{C}}_{A_1}(K_8(x_t))$ & $0.5454$ & $	0.6900$ & $	0.0100$ & $	0.6900$ & $	0.7065$ & $	0.7179$ & $	0.0400$ & $	0.6071$ & $	0.2700$ & $	0.6200$ & $	0.4900$ & $	0.1000$ & $	0.1900$ & $	0.7317$ & $	0.1800$\\
\rule{0pt}{16pt}
			$\underline{\mathcal{C}}_{A_1}(K_9(x_t))$ & $0.5454$ & $	0.6200$ & $	0.3400$ & $	0.6200$ & $0.7065$ & $	0.7179$ & $	0.6500$ & $	0.6071$ & $	0.3200$ & $	0.5500$ & $0.3300$ & $	0.3500$ & $	0.8311$ & $	0.2800$ & $	0.6764$\\
\rule{0pt}{16pt}
			$\underline{\mathcal{C}}_{A_1}(K_{10}(x_t))$ & $0.5000$ & $	0.4900$ & $	0.8202$ & $	0.3500$ & $0.4500$ & $	0.7179$ & $	0.7000$ & $	0.5900$ & $	0.6800$ & $	0.7534$ & $	0.2600$ & $	0.6900$ & $	0.3300$ & $	0.7500$ & $	0.1200$\\
\rule{0pt}{16pt}
			$\underline{\mathcal{C}}_{A_1}(K_{11}(x_t))$ & $0.2900$ & $	0.5400$ & $	0.8400$ & $	0.5750$ & $	0.7065$ & $	0.4300$ & $	0.4700$ & $	0.5000$ & $	0.5100$ & $	0.4800$ & $	0.8181$ & $	0.5200$ & $	0.7878$ & $	0.7317$ & $	0.4300$\\
\rule{0pt}{16pt}
			$\underline{\mathcal{C}}_{A_1}(K_{12}(x_t))$ & $0.4100$ & $	0.6700$ & $	0.8333$ & $ 0.6700$ & $	0.4800$ & $	0.7179$ & $	0.4200$ & $	0.3900$ & $	0.4500$ & $	0.8352$ & $	0.7600$ & $	0.6900$ & $	0.7878$ & $	0.2800$ & $	0.2100$\\
\rule{0pt}{16pt}
			$\underline{\mathcal{C}}_{A_1}(K_{13}(x_t))$ & $0.1200$ & $	0.6818$ & $ 0.3200$ & $	0.2500$ & $	0.3400$ & $	0.3800$ & $	0.5500$ & $	0.5200$ & $	0.5100$ & $	0.4000$ & $	0.5400$ & $ 0.7297$ & $	0.1000$ & $	0.7317$ & $	0.3000$\\
\rule{0pt}{16pt}
			$\underline{\mathcal{C}}_{A_1}(K_{14}(x_t))$ & $0.5000$ & $	0.6100$ & $	0.8200$ & $	0.5300$ & $	0.2000$ & $	0.4500$ & $	0.4300$ & $	0.7200$ & $	0.3500$ & $	0.7534$ & $	0.7800$ & $	0.0200$ & $	0.7300$ & $	0.1100$ & $0.4500$\\
\rule{0pt}{16pt}
			$\underline{\mathcal{C}}_{A_1}(K_{15}(x_t))$ & $0.0200$ & $0.4400$ & $	0.8202$ & $	0.5750$ & $	0.5200$ & $	0.7179$ & $	0.1000$ & $	0.1400$ & $	0.5600$ & $	0.7534$ & $	0.0800$ & $	0.3700$ & $	0.6600$ & $	0.7500$ & $	0.6764$\\
			\hline
		
		\end{tabular}}
	\end{center}
	
\end{sidewaystable}
\begin{sidewaystable}[thp]
	\begin{center}\caption{The overlap function-based upper approximation of attribute fuzzy set $K_s(x_t)$}\label{ou}
\resizebox{.95\columnwidth}{!}{
\begin{tabular}{l|lllllllllllllll}
			\hline
			$\quad$ & $x_1$  & $x_2$ & $x_3$ & $x_4$ & $x_5$ & $x_6$ & $x_7$ & $x_8$ & $x_9$ & $x_{10}$ & $x_{11}$ & $x_{12}$ & $x_{13}$ & $x_{14}$ & $x_{15}$ \\
			\hline
			\rule{0pt}{10pt}
			$\overline{\mathfrak{C}}_{A_1}(K_1(x_t))$ & $0.6700$ & $	1$ & $	1$ & $	0.8185$ & $	0.5502$ & $	0.5094$ & $	0.6300$ & $	0.6700$ & $	0.8219$ & $	1$ & $	0.6700$ & $	0.7400$ & $	0.6000$ & $	0.6900$ & $	0.6900$\\
\rule{0pt}{16pt}
			$\overline{\mathfrak{C}}_{A_1}(K_2(x_t))$ & $0.6800$ & $	0.7400$ & $	0.7200$ & $	0.7889$ & $	0.6400$ & $	0.6400$ & $	0.7059$ & $	0.7400$ & $	0.6400$ & $	0.8700$ & $	1$ & $	0.7106$ & $	0.7700$ & $ 0.6324$ & $	0.6400$  \\
\rule{0pt}{16pt}
			$\overline{\mathfrak{C}}_{A_1}(K_3(x_t))$ & $0.6600$ & $	0.9300$ & $	0.4811$ & $	0.8185$ & $	0.8000$ & $	0.6013$ & $	0.8246$ & $	0.6700$ & $	1	$ & $0.5811$ & $	0.6600$ & $	0.6600$ & $	0.7200$ & $	0.6453$ & $	0.7211$\\
\rule{0pt}{16pt}
			$\overline{\mathfrak{C}}_{A_1}(K_4(x_t))$ & $0.9900$ & $	0.4500$ & $	0.4200$ & $	0.7889$ & $	0.6800$ & $	0.6800$ & $	0.6800$ & $	0.4938$ & $	0.6800$ & $	0.8700$ & $	0.4500$ & $	1$ & $	0.4285$ & $	0.6324$ & $	0.6800$\\
\rule{0pt}{16pt}
			$\overline{\mathfrak{C}}_{A_1}(K_5(x_t))$ & $0.6775$ & $	0.7937$ & $	0.8900$ & $	0.7422$ & $	0.7806$ & $	1$ & $	0.9500$ & $	0.8100$ & $	0.7500$ & $	0.4401$ & $	0.9300$ & $	0.6708$ & $	0.5669$ & $	0.7500$ & $0.9273$\\
\rule{0pt}{16pt}
			$\overline{\mathfrak{C}}_{A_1}(K_6(x_t))$ & $0.6400$ & $	0.6400$ & $	1	$ & $0.7300$ & $	0.6500$ & $	0.6013$ & $	0.7300$ & $	0.6400$ & $	0.7300$ & $	0.7300$ & $	0.6400$ & $	0.7106$ & $	0.5263$ & $	0.6324$ & $	0.6897$\\
\rule{0pt}{16pt}
			$\overline{\mathfrak{C}}_{A_1}(K_7(x_t))$ & $1$ & $	0.6428$ & $0.9400$ & $	0.7422$ & $	0.7800$ & $	0.7800$ & $	0.7800$ & $	0.6838$ & $	0.7000$ & $	0.3400$ & $	0.7700$ & $	0.6708$ & $	0.5263$ & $	0.5830$ & $	0.7800$\\
\rule{0pt}{16pt}
			$\overline{\mathfrak{C}}_{A_1}(K_8(x_t))$ & $0.7800$ & $0.7937$ & $	0.4811$ & $	0.8400$ & $	0.9200$ & $	0.7700$ & $0.7700$ & $	1$ & $	0.7000$ & $	0.6200$ & $	0.4900$ & $	0.6200$ & $	0.5669$ & $	1$ & $	0.7700$\\
\rule{0pt}{16pt}
			$\overline{\mathfrak{C}}_{A_1}(K_9(x_t))$ & $0.7100$ & $	0.7937$ & $	0.4811$ & $	0.9400$ & $	1$ & $	0.7300$ & $	0.7300$ & $	0.8400$ & $	0.7000$ & $	0.5811$ & $	0.4859$ & $	0.6324$ & $	0.8400$ & $ 0.6200$ & $	0.7300$\\
\rule{0pt}{16pt}
			$\overline{\mathfrak{C}}_{A_1}(K_{10}(x_t))$ & $0.5324$ & $	0.5900$ & $	0.8800$ & $	0.7889$ & $	0.7806$ & $	0.9600$ & $	1$ & $	0.5900$ & $	0.7000$ & $	0.8100$ & $	0.4859$ & $	0.7106$ & $	0.5669$ & $	0.9300$ & $	0.9273$\\
\rule{0pt}{16pt}
			$\overline{\mathfrak{C}}_{A_1}(K_{11}(x_t))$ & $0.6775$ & $	0.6428$ & $	0.9800$ & $	0.7200$ & $	0.8400$ & $	0.6013$ & $	0.6614$ & $	0.6838$ & $	0.6962$ & $	0.5811$ & $	0.9400$ & $	0.6708$ & $	1	$ & $ 0.9400$ & $	0.7370$\\
\rule{0pt}{16pt}
			$\overline{\mathfrak{C}}_{A_1}(K_{12}(x_t))$ & $0.6775$ & $	0.6700$ & $	0.9300$ & $	1	$ & $ 0.7600$ & $	0.7600$ & $	0.7600$ & $	0.6838$ & $	0.7000$ & $	0.8800$ & $	0.7600$ & $	0.7106$ & $	0.8800$ & $	0.6453$ & $	0.7600$\\
\rule{0pt}{16pt}
			$\overline{\mathfrak{C}}_{A_1}(K_{13}(x_t))$ & $0.5400$ & $	0.8100$ & $	0.4044$ & $	0.8100$ & $	0.5647$ & $	0.6013$ & $0.6600$ & $	0.5400$ & $	0.8100$ & $	0.4401$ & $	0.5400$ & $	0.8900$ & $	0.5200$ & $	1$ & $	0.7370$\\
\rule{0pt}{16pt}
			$\overline{\mathfrak{C}}_{A_1}(K_{14}(x_t))$ & $0.6775$ & $	0.7200$ & $	0.8200$ & $	0.7889$ & $	0.5502$ & $	0.4500$ & $	0.7059$ & $	0.7200$ & $	0.6100$ & $	1	$ & $0.7800$ & $	0.7106$ & $	0.7300$ & $	0.6324$ & $	0.5555$\\
\rule{0pt}{16pt}
			$\overline{\mathfrak{C}}_{A_1}(K_{15}(x_t))$ & $0.5588$ & $	0.5587$ & $	0.8300$ & $	0.7889$ & $	0.7806$ & $	0.8600$ & $	0.8164$ & $	0.4761$ & $	0.7000$ & $	0.8500$ & $	0.4477$ & $	0.7106$ & $	0.6600$ & $	0.9400$ & $	1$\\
			\hline
		
		\end{tabular}}
	\end{center}
	
\end{sidewaystable}
\begin{sidewaystable}[thp]
	\begin{center}\caption{The $t$-norm-based upper approximation of attribute fuzzy set $K_s(x_t)$}\label{tu}
\resizebox{.95\columnwidth}{!}{
\begin{tabular}{l|lllllllllllllll}
			\hline
			$\quad$ & $x_1$  & $x_2$ & $x_3$ & $x_4$ & $x_5$ & $x_6$ & $x_7$ & $x_8$ & $x_9$ & $x_{10}$ & $x_{11}$ & $x_{12}$ & $x_{13}$ & $x_{14}$ & $x_{15}$ \\
			\hline
			\rule{0pt}{10pt}
			$\overline{\mathcal{C}}_{A_1}(K_1(x_t))$ & $0.4600$ & $	1$ & $	1$ & $	0.4250$ & $	0.3000$ & $	0.1764$ & $	0.3000$ & $	0.6300$ & $	0.3181$ & $1	$ & $0.6700$ & $	0.7400$ & $	0.6000$ & $ 0.6900$ & $	0.2444$\\
			\rule{0pt}{16pt}
			$\overline{\mathcal{C}}_{A_1}(K_2(x_t))$ & $0.6800$ & $	0.3181$ & $	0.7200$ & $	0.2300$ & $	0.2300$ & $	0.6400$ & $	0.3300$ & $	0.7400$ & $	0.4900$ & $	0.8700$ & $	1	$ & $0.2702$ & $ 0.7700$ & $	0.4800$ & $	0.6400$ \\
		\rule{0pt}{16pt}
			$\overline{\mathcal{C}}_{A_1}(K_3(x_t))$ & $0.4545$ & $	0.9300$ & $	0.1900$ & $	0.4250$ & $	0.8000$ & $	0.4900$ & $	0.7700$ & $	0.6700$ & $	1	$ & $0.2465$ & $	0.6600$ & $	0.2702$ & $	0.7200$ & $	0.2500$ & $	0.4900$\\
		\rule{0pt}{16pt}
			$\overline{\mathcal{C}}_{A_1}(K_4(x_t))$ & $0.9900$ & $	0.2857$ & $	0.2600$ & $	0.4000$ & $	0.2934$ & $	0.6800$ & $	0.4000$ & $	0.3928$ & $	0.4900$ & $	0.8700$ & $	0.4500$ & $	1$ & $	0.4200$ & $	0.2682$ & $	0.6800$\\
		\rule{0pt}{16pt}
			$\overline{\mathcal{C}}_{A_1}(K_5(x_t))$ & $0.4545$ & $	0.6700$ & $	0.8900$ & $	0.5200$ & $	0.7000$ & $	1	$ & $0.9500$ & $	0.8100$ & $	0.7500$ & $	0.3300$ & $	0.9300$ & $	0.4200$ & $	0.2121$ & $	0.7500$ & $	0.9200$\\
\rule{0pt}{16pt}
			$\overline{\mathcal{C}}_{A_1}(K_6(x_t))$ & $0.6300$ & $	0.3600$ & $	1$ & $	0.3600$ & $ 0.6500$ & $	0.6000$ & $	0.3900$ & $	0.3928$ & $	0.7300$ & $	0.7300$ & $	0.6400$ & $0.6200$ & $	0.1800$ & $ 0.6300$ & $	0.6000$\\
\rule{0pt}{16pt}
			$\overline{\mathcal{C}}_{A_1}(K_7(x_t))$ & $1$ & $	0.3000$ & $	0.9400$ & $	0.2261$ & $	0.6800$ & $	0.7800$ & $	0.5300$ & $	0.3928$ & $	0.4900$ & $	0.2465$ & $	0.7700$ & $	0.2702$ & $	0.3400$ & $ 0.4100$ & $	0.7678$\\
\rule{0pt}{16pt}
			$\overline{\mathcal{C}}_{A_1}(K_8(x_t))$ & $0.7800$ & $	0.6900$ & $	0.1797$ & $	0.8400$ & $	0.9200$ & $	0.7700$ & $	0.2700$ & $	1$ & $	0.4900$ & $	0.6200$ & $	0.4900$ & $	0.2500$ & $	0.2121$ & $	1	$ & $ 0.7678$\\
\rule{0pt}{16pt}
			$\overline{\mathcal{C}}_{A_1}(K_9(x_t))$ & $0.7100$ & $	0.6200$ & $	0.3400$ & $	0.9400$ & $ 1	$ & $ 0.7300$ & $	0.6500$ & $	0.8400$ & $	0.4900$ & $	0.5500$ & $	0.3300$ & $	0.3500$ & $	0.8400$ & $	0.2800$ & $	0.7300$\\
\rule{0pt}{16pt}
			$\overline{\mathcal{C}}_{A_1}(K_{10}(x_t))$ & $0.5000$ & $	0.4900$ & $	0.8800$ & $	0.4250$ & $	0.4500$ & $	0.9600$ & $	1	$ & $0.5900$ & $	0.6800$ & $	0.8100$ & $0.2600$ & $	0.6900$ & $	0.3300$ & $	0.9300$ & $	0.7678$\\
\rule{0pt}{16pt}
			$\overline{\mathcal{C}}_{A_1}(K_{11}(x_t))$ & $0.4545$ & $	0.5400$ & $	0.9800$ & $	0.7200$ & $	0.8400$ & $	0.4300$ & $	0.4700$ & $	0.5000$ & $	0.6500$ & $	0.4800$ & $0.9400$ & $	0.5200$ & $	1	$ & $ 0.9400$ & $	0.5600$\\
\rule{0pt}{16pt}
			$\overline{\mathcal{C}}_{A_1}(K_{12}(x_t))$ & $0.4545$ & $	0.6700$ & $	0.9300$ & $	1	$ & $ 0.4800$ & $	0.7600$ & $	0.4200$ & $	0.3928$ & $	0.4900$ & $	0.8800$ & $	0.7600$ & $	0.6900$ & $	0.8800$ & $	0.2800$ & $	0.7600$\\
\rule{0pt}{16pt}
			$\overline{\mathcal{C}}_{A_1}(K_{13}(x_t))$ & $0.4545$ & $0.8100$ & $	0.3200$ & $	0.4250$ & $	0.3400$ & $	0.3800$ & $	0.5500$ & $	0.5200$ & $	0.6600$ & $	0.4000$ & $	0.5400$ & $	0.8900$ & $	0.2121$ & $	1$ & $	0.3800$\\
\rule{0pt}{16pt}
			$\overline{\mathcal{C}}_{A_1}(K_{14}(x_t))$ & $0.5000$ & $	0.6100$ & $	0.8200$ & $	0.5300$ & $	0.2934$ & $	0.4500$ & $	0.4300$ & $	0.7200$ & $	0.4500$ & $	1$ & $	0.7800$ & $	0.2702$ & $	0.7300$ & $	0.2682$ & $	0.4500$\\
\rule{0pt}{16pt}
			$\overline{\mathcal{C}}_{A_1}(K_{15}(x_t))$ & $0.2435$ & $	0.4400$ & $	0.8300$ & $	0.6200$ & $	0.5200$ & $	0.8600$ & $	0.3000$ & $	0.1900$ & $	0.5600$ & $	0.8500$ & $	0.1720$ & $	0.3700$ & $	0.6600$ & $	0.9400$ & $	1$\\
			\hline
		
		\end{tabular}}
	\end{center}
	
\end{sidewaystable}
Then the approximate precisions of the attribute set $K_s$, $\mathfrak{A}_{(\underline{\mathfrak{C}}_{A1},\overline{\mathfrak{C}}_{A1})}(K_s)$ and $\mathcal{A}_{(\underline{\mathcal{C}}_{A1},\overline{\mathcal{C}}_{A1})}(K_s)$, can be calculated, respectively. The results are depicted in the Table \ref{app}.
\begin{table}[H]
	\begin{center}\caption{The approximate precisions of the attribute set $K_s$}\label{app}
\resizebox{.95\columnwidth}{!}{
\begin{tabular}{l|lllllllllllllll}
			\hline
			$\quad$ & $K_1$  & $K_2$ & $K_3$ & $K_4$ & $K_5$ & $K_6$ & $K_7$ & $K_8$ & $K_9$ & $K_{10}$ & $K_{11}$ & $K_{12}$ & $K_{13}$ & $K_{14}$ & $K_{15}$ \\
			\hline
			\rule{0pt}{16pt}
			$\mathfrak{A}_{(\underline{\mathfrak{C}}_{A1},\overline{\mathfrak{C}}_{A1})}(K_s)$ & $0.4679$ & $	0.4662$ & $	0.4470$ & $	0.4248$ & $	0.5617$ & $	0.5368$ & $	0.3906$ & $	0.4421$ & $	0.5060$ & $	0.5076$ & $	0.6081$ & $	0.5381$ & $	0.5226$ & $	0.5338$ & $	0.4712$\\\hline
			\rule{0pt}{16pt}
			$\mathcal{A}_{(\underline{\mathcal{C}}_{A1},\overline{\mathcal{C}}_{A1})}(K_s)$ & $0.8329$ & $0.8322$ & $	0.7459$ & $	0.7447$ & $	0.8802$ & $	0.8951$ & $	0.7786$ & $	0.7100$ & $	0.8664$ & $	0.8400$ & $	0.8608$ & $	0.8737$ & $	0.8213$ & $	0.8977$ & $	0.8395$  \\
			
			\hline
		
		\end{tabular}}
	\end{center}
	
\end{table}
With the function \ref{wf1} and the function \ref{wf2}, two kinds of weights for each attribute can be given, and the details are shown in the Table \ref{weight}.
\begin{table}[H]
	\begin{center}\caption{The attribute weight vectors}\label{weight}
\resizebox{.95\columnwidth}{!}{
\begin{tabular}{l|lllllllllllllll}
			\hline
			$\quad$ & $K_1$  & $K_2$ & $K_3$ & $K_4$ & $K_5$ & $K_6$ & $K_7$ & $K_8$ & $K_9$ & $K_{10}$ & $K_{11}$ & $K_{12}$ & $K_{13}$ & $K_{14}$ & $K_{15}$ \\
			\hline
			\rule{0pt}{16pt}
			$\mathfrak{W}_s$ & $0.0630$ & $	0.0627$ & $	0.0602$ & $	0.0572$ & $	0.0756$ & $	0.0723$ & $	0.0526$ & $	0.0595$ & $	0.0681$ & $	0.0683$ & $	0.0818$ & $	0.0724$ & $	0.0703$ & $	0.0718$ & $	0.0634$\\
\hline
			\rule{0pt}{16pt}
			$\mathcal{W}_s$ & $0.0670$ & $	0.0670$ & $	0.0600$ & $	0.0599$ & $	0.0708$ & $	0.0720$ & $	0.0626$ & $	0.0571$ & $	0.0697$ & $	0.0676$ & $	0.0693$ & $	0.0703$ & $	0.0661$ & $	0.0722$ & $	0.0675$  \\
			
			\hline
		
		\end{tabular}}
	\end{center}
	
\end{table}
{\bf Step4:} For the final step, we need to calculate the closeness coefficients of alternative $x_t$, which can help us to determine the ranking of objects. By the function \ref{func14} and \ref{func15}, we obtain that $\mathfrak{H}^{\uparrow}=0.3106$, $\mathfrak{H}^{\downarrow}=0.5757 $ and $\mathcal{H}^{\uparrow}=0.3099$, $\mathcal{H}^{\downarrow}=0.5805 $. Then, the results are illustrated in the Table \ref{endd}.
\begin{table}[H]
	\begin{center}\caption{The closeness coefficient}\label{endd}
\resizebox{.95\columnwidth}{!}{
\begin{tabular}{l|lllllllllllllll}
			\hline
			$\quad$ & $x_1$  & $x_2$ & $x_3$ & $x_4$ & $x_5$ & $x_6$ & $x_7$ & $x_8$ & $x_9$ & $x_{10}$ & $x_{11}$ & $x_{12}$ & $x_{13}$ & $x_{14}$ & $x_{15}$ \\
			\hline
			\rule{0pt}{12pt}
			$\mathfrak{H}_t$ & $-1.0706$ & $	-0.6625$ & $	0$ & $	-0.9999$ & $	-0.7258$ & $	-0.2563$ & $	-0.9546$ & $	-0.7387$ & $	-0.9359$ & $	-0.2376$ & $	-0.3026$ & $ -1.1245$ & $-0.7663$ & $-0.5034$ & $	-1.3351$\\\hline
			\rule{0pt}{12pt}
			$\mathcal{H}_t$ & $-1.0194$ & $	-0.7060$ & $	0$ & $	-1.0500$ & $	-0.7609$ & $	-0.2525$ & $	-0.9768$ & $	-0.7731$ & $	-0.9848$ & $	-0.2281$ & $	-0.3290$ & $	-1.1547$ & $	-0.7781$ & $-0.5549$ & $	-1.3317$  \\
			
			\hline
		
		\end{tabular}}
	\end{center}
	
\end{table}
In line with the above steps, the rankings based $(\underline{\mathfrak{C}}_{A1}(X)$, $\overline{\mathfrak{C}}_{A1}(X))$ and $(\underline{\mathcal{C}}_{A1}(X)$, $\overline{\mathcal{C}}_{A1}(X))$ of all alternative targets can be acquired, respectively. The orders are shown as below. Meanwhile, the sorting results based on the other models are given in the Table \ref{ranking1}.

{\bf On the basis of $(\underline{\mathfrak{C}}_{A1}(X)$, $\overline{\mathfrak{C}}_{A1}(X))$:} $$x_3>x_{10}>x_6>x_{11}>x_{14}>x_2>x_5>x_8>x_{13}>x_{9}>x_{7}>x_4>x_1>x_{12}>x_{15}.$$

{\bf On the basis of $(\underline{\mathcal{C}}_{A1}(X)$, $\overline{\mathcal{C}}_{A1}(X))$:} $$x_3>x_{10}>x_6>x_{11}>x_{14}>x_2>x_5>x_8>x_{13}>x_{7}>x_{9}>x_1>x_4>x_{12}>x_{15}.$$

Thus, the third bone transplant replacement material which is the optimal alternative for the determination should be selected for the patient by the clinician.
\begin{table}[htbp]
	\begin{center}\caption{The rankings based on different model}\label{ranking1}
\resizebox{.95\columnwidth}{!}{
\begin{tabular}{l|l}
			\hline
			MODEL& THE RANKING OF ALTERNATIVES \\
			\hline
			\rule{0pt}{10pt}
			$(\underline{\mathfrak{C}}_{A1}(X)$, $\overline{\mathfrak{C}}_{A1}(X))$ & $x_3>x_{10}>x_6>x_{11}>x_{14}>x_2>x_5>x_8>x_{13}>x_{9}>x_{7}>x_4>x_1>x_{12}>x_{15}$\\
$(\underline{\mathfrak{C}}_{A2}(X)$, $\overline{\mathfrak{C}}_{A2}(X))$ & $x_3>x_{10}>x_6>x_{11}>x_{14}>x_2>x_5>x_{13}>x_{8}>x_{7}>x_{9}>x_4>x_1>x_{12}>x_{15}$\\
$(\underline{\mathfrak{C}}_{A3}(X)$, $\overline{\mathfrak{C}}_{A3}(X))$ & $x_3>x_{6}>x_{10}>x_{11}>x_{14}>x_2>x_5>x_{13}>x_{8}>x_{4}>x_{9}>x_7>x_{12}>x_{1}>x_{15}$\\

$(\underline{\mathfrak{C}}_{B}(X)$, $\overline{\mathfrak{C}}_{B}(X))$ & $x_3>x_{10}>x_6>x_{11}>x_{14}>x_2>x_5>x_8>x_{13}>x_{9}>x_{7}>x_4>x_1>x_{12}>x_{15}$\\
$(\underline{\mathfrak{C}}_{C}(X)$, $\overline{\mathfrak{C}}_{C}(X))$ & $x_3>x_{6}>x_{10}>x_{11}>x_{14}>x_2>x_5>x_{13}>x_{8}>x_{4}>x_{9}>x_7>x_{12}>x_{1}>x_{15}$\\
$(\underline{\mathfrak{C}}_{D}(X)$, $\overline{\mathfrak{C}}_{D}(X))$ & $x_3>x_{10}>x_6>x_{11}>x_{14}>x_2>x_5>x_{13}>x_{8}>x_{7}>x_{9}>x_4>x_1>x_{12}>x_{15}$\\
$(\underline{\mathfrak{C}}_{E}(X)$, $\overline{\mathfrak{C}}_{E}(X))$ & $x_3>x_{10}>x_6>x_{11}>x_{14}>x_5>x_{13}>x_2>x_{8}>x_{7}>x_{4}>x_9>x_{12}>x_{1}>x_{15}$\\
$(\underline{\mathfrak{C}}_{F1}(X)$, $\overline{\mathfrak{C}}_{F1}(X))$ & $x_3>x_{10}>x_6>x_{11}>x_{14}>x_2>x_5>x_8>x_{13}>x_{9}>x_{7}>x_4>x_1>x_{12}>x_{15}$\\
$(\underline{\mathfrak{C}}_{F2}(X)$, $\overline{\mathfrak{C}}_{F2}(X))$ & $x_3>x_{10}>x_6>x_{11}>x_{14}>x_2>x_5>x_8>x_{13}>x_{9}>x_{7}>x_4>x_1>x_{12}>x_{15}$\\
$(\underline{\mathfrak{C}}_{G}(X)$, $\overline{\mathfrak{C}}_{G}(X))$ & $x_3>x_{10}>x_6>x_{11}>x_{14}>x_5>x_{2}>x_{13}>x_{8}>x_{7}>x_{9}>x_4>x_{12}>x_{1}>x_{15}$\\
$(\underline{\mathfrak{C}}_{H1}(X)$, $\overline{\mathfrak{C}}_{H1}(X))$ & $x_3>x_{10}>x_6>x_{11}>x_{14}>x_5>x_{13}>x_2>x_{8}>x_{7}>x_{4}>x_9>x_{12}>x_{1}>x_{15}$\\
$(\underline{\mathfrak{C}}_{H2}(X)$, $\overline{\mathfrak{C}}_{H2}(X))$ & $x_3>x_{10}>x_6>x_{11}>x_{14}>x_5>x_{13}>x_2>x_{8}>x_{7}>x_{4}>x_9>x_{12}>x_{1}>x_{15}$\\
$(\underline{\mathfrak{C}}_{I}(X)$, $\overline{\mathfrak{C}}_{I}(X))$ & $x_3>x_{10}>x_6>x_{11}>x_{14}>x_{13}>x_{2}>x_{5}>x_{8}>x_{4}>x_{7}>x_9>x_{12}>x_{1}>x_{15}$\\
$(\underline{\mathfrak{C}}_{J}(X)$, $\overline{\mathfrak{C}}_{J}(X))$ & $x_3>x_{10}>x_6>x_{11}>x_{14}>x_5>x_{2}>x_{13}>x_{8}>x_{7}>x_{9}>x_4>x_{12}>x_{1}>x_{15}$\\
$(\underline{\mathfrak{C}}_{K}(X)$, $\overline{\mathfrak{C}}_{K}(X))$ & $x_3>x_{10}>x_6>x_{11}>x_{14}>x_{13}>x_{2}>x_{5}>x_{8}>x_{4}>x_{7}>x_9>x_{12}>x_{1}>x_{15}$\\
$(\underline{\mathfrak{C}}_{L}(X)$, $\overline{\mathfrak{C}}_{L}(X))$ & $x_3>x_{10}>x_6>x_{11}>x_{14}>x_5>x_{13}>x_2>x_{8}>x_{7}>x_{4}>x_9>x_{12}>x_{1}>x_{15}$\\
$(\underline{\mathfrak{C}}_{M}(X)$, $\overline{\mathfrak{C}}_{M}(X))$ & $x_3>x_{10}>x_6>x_{11}>x_{14}>x_5>x_{13}>x_2>x_{8}>x_{7}>x_{4}>x_9>x_{12}>x_{1}>x_{15}$\\\hline
			\rule{0pt}{10pt}
			$(\underline{\mathcal{C}}_{A1}(X)$, $\overline{\mathcal{C}}_{A1}(X))$ & $x_3>x_{10}>x_6>x_{11}>x_{14}>x_2>x_5>x_8>x_{13}>x_{7}>x_{9}>x_1>x_4>x_{12}>x_{15}$  \\
$(\underline{\mathcal{C}}_{A2}(X)$, $\overline{\mathcal{C}}_{A2}(X))$ & $x_3>x_{10}>x_6>x_{11}>x_{14}>x_2>x_5>x_8>x_{13}>x_{7}>x_{9}>x_1>x_4>x_{12}>x_{15}$  \\
			$(\underline{\mathcal{C}}_{B}(X)$, $\overline{\mathcal{C}}_{B}(X))$ & $x_3>x_{10}>x_6>x_{11}>x_{14}>x_2>x_5>x_8>x_{13}>x_{7}>x_{9}>x_1>x_4>x_{12}>x_{15}$  \\
$(\underline{\mathcal{C}}_{C}(X)$, $\overline{\mathcal{C}}_{C}(X))$ & $x_3>x_{10}>x_6>x_{11}>x_{14}>x_2>x_5>x_8>x_{13}>x_{7}>x_{9}>x_4>x_1>x_{12}>x_{15}$  \\
$(\underline{\mathcal{C}}_{D}(X)$, $\overline{\mathcal{C}}_{D}(X))$ & $x_3>x_{10}>x_6>x_{11}>x_{14}>x_2>x_5>x_8>x_{13}>x_{7}>x_{9}>x_1>x_4>x_{12}>x_{15}$  \\
$(\underline{\mathcal{C}}_{E}(X)$, $\overline{\mathcal{C}}_{E}(X))$ & $x_3>x_{6}>x_{10}>x_{11}>x_{14}>x_5>x_{13}>x_2>x_{8}>x_{7}>x_{4}>x_9>x_{12}>x_{1}>x_{15}$\\
$(\underline{\mathcal{C}}_{F1}(X)$, $\overline{\mathcal{C}}_{F1}(X))$ & $x_3>x_{10}>x_6>x_{11}>x_{14}>x_2>x_5>x_8>x_{13}>x_{7}>x_{9}>x_1>x_4>x_{12}>x_{15}$  \\
$(\underline{\mathcal{C}}_{F2}(X)$, $\overline{\mathcal{C}}_{F2}(X))$ & $x_3>x_{10}>x_6>x_{11}>x_{14}>x_2>x_5>x_8>x_{13}>x_{7}>x_{9}>x_1>x_4>x_{12}>x_{15}$  \\
$(\underline{\mathcal{C}}_{G}(X)$, $\overline{\mathcal{C}}_{G}(X))$ & $x_3>x_{6}>x_{10}>x_{11}>x_{14}>x_5>x_{13}>x_2>x_{8}>x_{7}>x_{9}>x_4>x_{12}>x_{1}>x_{15}$  \\
$(\underline{\mathcal{C}}_{H1}(X)$, $\overline{\mathcal{C}}_{H1}(X))$ & $x_3>x_{6}>x_{10}>x_{11}>x_{14}>x_5>x_{13}>x_2>x_{8}>x_{7}>x_{4}>x_9>x_{12}>x_{1}>x_{15}$\\
$(\underline{\mathcal{C}}_{H2}(X)$, $\overline{\mathcal{C}}_{H2}(X))$ & $x_3>x_{6}>x_{10}>x_{11}>x_{14}>x_5>x_{13}>x_2>x_{8}>x_{7}>x_{4}>x_9>x_{12}>x_{1}>x_{15}$\\
$(\underline{\mathcal{C}}_{I}(X)$, $\overline{\mathcal{C}}_{I}(X))$ & $x_3>x_{6}>x_{10}>x_{11}>x_{5}>x_{13}>x_{14}>x_4>x_{2}>x_{8}>x_{7}>x_9>x_{12}>x_{1}>x_{15}$  \\
$(\underline{\mathcal{C}}_{J}(X)$, $\overline{\mathcal{C}}_{J}(X))$ & $x_3>x_{6}>x_{10}>x_{11}>x_{14}>x_5>x_{13}>x_2>x_{8}>x_{7}>x_{9}>x_4>x_{12}>x_{1}>x_{15}$  \\
$(\underline{\mathcal{C}}_{K}(X)$, $\overline{\mathcal{C}}_{K}(X))$ & $x_3>x_{6}>x_{10}>x_{11}>x_{5}>x_{13}>x_{14}>x_4>x_{2}>x_{8}>x_{7}>x_9>x_{12}>x_{1}>x_{15}$  \\
$(\underline{\mathcal{C}}_{L}(X)$, $\overline{\mathcal{C}}_{L}(X))$ & $x_3>x_{6}>x_{10}>x_{11}>x_{14}>x_5>x_{13}>x_2>x_{8}>x_{7}>x_{4}>x_9>x_{12}>x_{1}>x_{15}$\\
$(\underline{\mathcal{C}}_{M}(X)$, $\overline{\mathcal{C}}_{M}(X))$ & $x_3>x_{6}>x_{10}>x_{11}>x_{14}>x_5>x_{13}>x_2>x_{8}>x_{7}>x_{4}>x_9>x_{12}>x_{1}>x_{15}$\\
			\hline
		
		\end{tabular}}
	\end{center}
	
\end{table}
\subsection{Comparisons and analyses}\label{ss63}
According to the results we obtained before, the effectiveness and the reasonableness of new approaches are discussed in this subsection.
\subsubsection{The analyses among new models}
In the light of the ranking outcomes exhibited in Table \ref{ranking1}, although the results based on different models are high similarity, and all models give the same optimal solution, we also find that changes in the selection of neighborhood operators can lead to subtle changes in the sorting results. The essence of these variations is that for approximation operators based on the same pair of logical operators, the different neighborhood operator implies the change in the order relation among pairs of approximation operators, which in turn leads to the changes in assigning a weight of each attribute. Especially for the TNRFRS models, on the basis of the partially order relation defined in Remark \ref{rere}, a model is at the bottom of the Hasse diagram implying the largest lower approximation set and the smallest upper approximation set, which represents, to some extent, the most accurate approximation of the target set. Thus, with the underlying logical operator unchanged, the sorting result based the model $(\underline{\mathcal{C}}_{A1}(X)$, $\overline{\mathcal{C}}_{A1}(X))$ is the best representative of TNRFRS models. The situation is slightly different for ONRFRS models because the relations among most models are incomparable. The reasons for this change are the disappearance of the associative law for the overlap functions and the change of the boundary  conditions, however, this does not prevent ONRFRS models and TNRFRS models from making similar decision outcomes. Furthermore, for more complicated cases in practical applications, such as image processing, the pending data cannot be arbitrarily combined with others, the ONRFRS models are more suitable and the incomparable neighborhood operators provide more options for decision-makers.
\begin{figure}[H]\caption{Comparison of partial consistency rate}\label{OT}
  \centering
  \includegraphics[width=14cm]{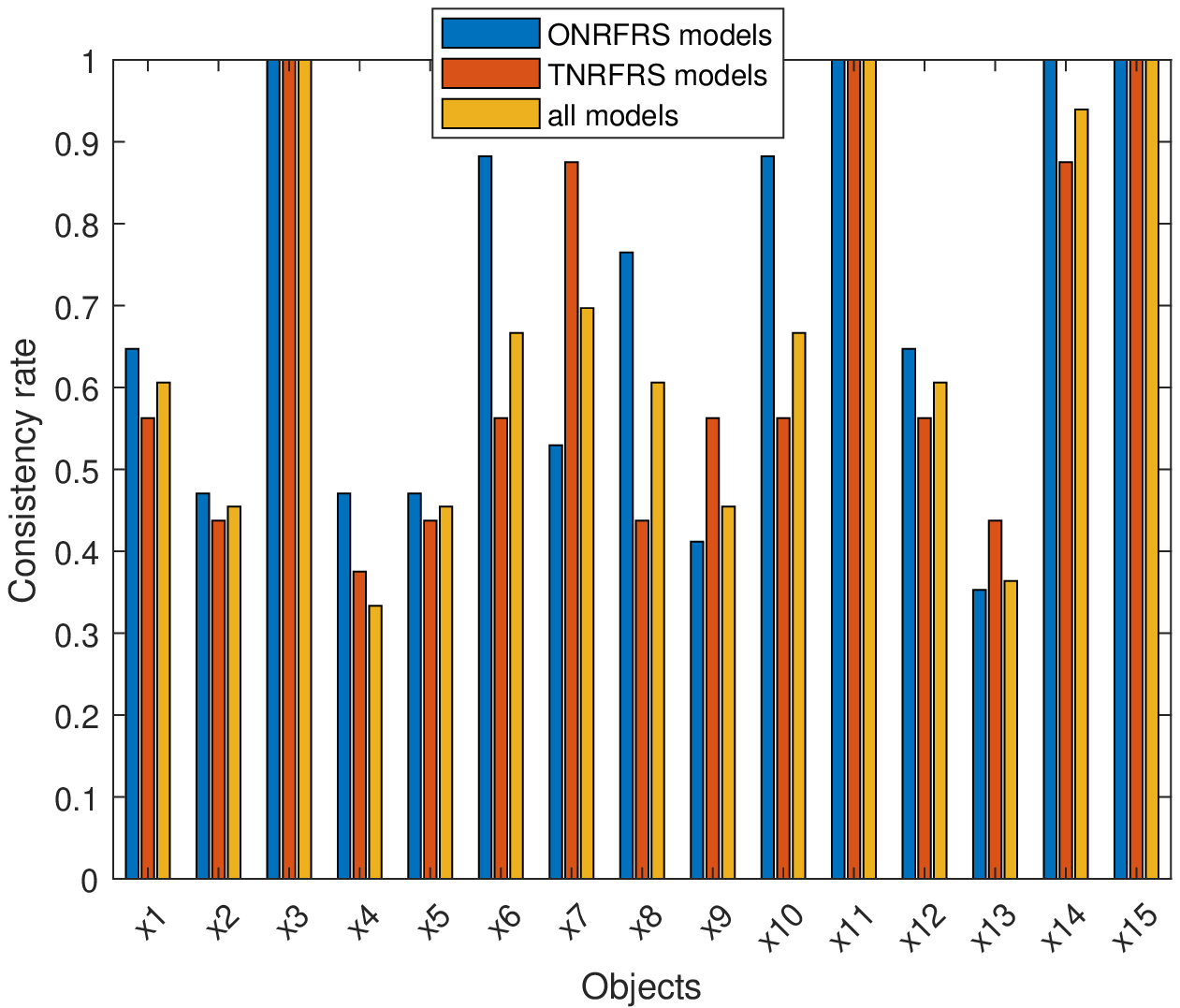}\\

\end{figure}

Comparing ONRFRS models with TNRFRS models, we find that the sorting results obtained by ONRFRS models for each alternative are more consistent according to Figure \ref{OT}. For ONRFRS models, the objects' consistency rate more than $60\%$ are a total of nine, which are $x_1$, $x_3$, $x_6$, $x_8$, $x_{10}$, $x_{11}$, $x_{12}$, $x_{14}$ and $x_{15}$, respectively. Yet, the number of consistency rate more than $60\%$  is just five. Overall, the overlap function based-models are superior to the $t$-norm based models in terms of the consistency of decision-making outcomes. This does not mean that TNRFRS models are not valid methodologies, as we discussed previously, because of the lattice relations among TNRFRS models, the result of the model $(\underline{\mathcal{C}}_{A1}(X)$, $\overline{\mathcal{C}}_{A1}(X))$ is the best representative for this type of models. Thus, for decision-makers, $(\underline{\mathcal{C}}_{A1}(X)$, $\overline{\mathcal{C}}_{A1}(X))$ model is a good choice, if they would like to obtain a conclusion rapidly by just one model. On the other hand, due to the high consistency of ranking results, the series of models based on overlap function are better choices for decision-makers who do not want to recklessly make decisions with only one model. Certainly, combining the results from two kinds of models is also a brilliant idea, for the number of alternatives whose consistency rate is more than $60\%$ is even higher than ONRFRS models by means of this way.
\subsubsection{The analyses with the models based on different logical operators}
Through the comparison with the conclusions from different types of models which are constructed by diverse types of logical operators, we surmise that the models established by different logical operators of the same type also induce slightly different decision results but do not affect the validity of the models. In order to verity our deduce, we construct the new models which are based on the non-associative overlap function ${O_{m_{2}}}(x,y)=\min\{x^2,y^2\}$ that satisfies the condition (O7) and its $R_O$-implication $I_{O_{m_2}}=\left\{\begin{matrix}
		1, & \text{if}$\ $ x^2\leq y;\\
		\sqrt{y}, & \text{if}$\ $ x^2>y.
	\end{matrix}\right.$, and the left-continuous $t$-norm $\mathscr{T}_{m}=\min\{x,y\}$ and its $R$-implication $\mathscr{I}_{\mathscr{T}_{m}}=\left\{\begin{matrix}
		1, & \text{if}$\ $ x\leq y;\\
		y, & \text{if}$\ $ x>y.
	\end{matrix}\right.$ Without losing generality, we only need to give the results based on $(\underline{\mathfrak{C}}_{A1}(X)$, $\overline{\mathfrak{C}}_{A1}(X))$, $(\underline{\mathcal{C}}_{A1}(X)$, $\overline{\mathcal{C}}_{A1}(X))$, $(\underline{\mathfrak{C}}_{H1}(X)$, $\overline{\mathfrak{C}}_{H1}(X))$ and $(\underline{\mathcal{C}}_{H1}(X)$, $\overline{\mathcal{C}}_{H1}(X))$ that separately represent the models defined on the original covering $\mathscr{C}$ which are based on overlap functions and their $R_O$-implications, and $t$-norms and their $R$-implications, respectively. The rankings are shown in Table \ref{ranking2}.
\begin{table}[t]
	\begin{center}\caption{The rankings based on models with different logical operators}\label{ranking2}
\resizebox{.95\columnwidth}{!}{
\begin{tabular}{l|l|l}
			\hline
			MODEL& BASIS&THE RANKING OF ALTERNATIVES \\
			\hline
			\rule{0pt}{10pt}
			$(\underline{\mathfrak{C}}_{A1}(X)$, $\overline{\mathfrak{C}}_{A1}(X))$ & $I_{O_D}$& $x_3>x_{10}>x_6>x_{11}>x_{14}>x_2>x_5>x_8>x_{13}>x_{9}>x_{7}>x_4>x_1>x_{12}>x_{15}$\\
\quad & $I_{O_{m_2}}$& $x_3>x_{10}>x_6>x_{11}>x_{14}>x_2>x_5>x_{8}>x_{13}>x_{7}>x_{9}>x_1>x_4>x_{12}>x_{15}$\\

			\hline
		\rule{0pt}{10pt}
			$(\underline{\mathfrak{C}}_{H1}(X)$, $\overline{\mathfrak{C}}_{H1}(X))$ & $O_D$& $x_3>x_{10}>x_6>x_{11}>x_{14}>x_5>x_{13}>x_2>x_{8}>x_{7}>x_{4}>x_9>x_{12}>x_{1}>x_{15}$\\
\quad & $O_{m_2}$& $x_3>x_{10}>x_6>x_{11}>x_{14}>x_5>x_{13}>x_{2}>x_{8}>x_{7}>x_{4}>x_9>x_{12}>x_1>x_{15}$\\

			\hline\rule{0pt}{10pt}
			$(\underline{\mathcal{C}}_{A1}(X)$, $\overline{\mathcal{C}}_{A1}(X))$ & $\mathscr{I}_{\mathscr{T}_D}$& $x_3>x_{10}>x_6>x_{11}>x_{14}>x_2>x_5>x_8>x_{13}>x_{7}>x_{9}>x_1>x_4>x_{12}>x_{15}$\\
\quad & $\mathscr{I}_{\mathscr{T}_{m_2}}$& $x_3>x_{10}>x_6>x_{11}>x_{14}>x_2>x_5>x_{8}>x_{13}>x_{7}>x_{9}>x_1>x_4>x_{12}>x_{15}$\\

			\hline\rule{0pt}{10pt}
			$(\underline{\mathcal{C}}_{H1}(X)$, $\overline{\mathcal{C}}_{A1}(X))$ & $\mathscr{T}_D$& $x_3>x_{6}>x_{10}>x_{11}>x_{14}>x_5>x_{13}>x_2>x_{8}>x_{7}>x_{4}>x_9>x_{12}>x_{1}>x_{15}$\\
\quad & $\mathscr{T}_{m_2}$& $x_3>x_{6}>x_{10}>x_{11}>x_{14}>x_5>x_{13}>x_{2}>x_{8}>x_{7}>x_{4}>x_9>x_{12}>x_{1}>x_{15}$\\

			\hline
		\end{tabular}}
	\end{center}
	
\begin{figure}[H]
  \centering\caption{The rankings shown in Table \ref{ranking2}}\label{AC}
  \includegraphics[width=14cm]{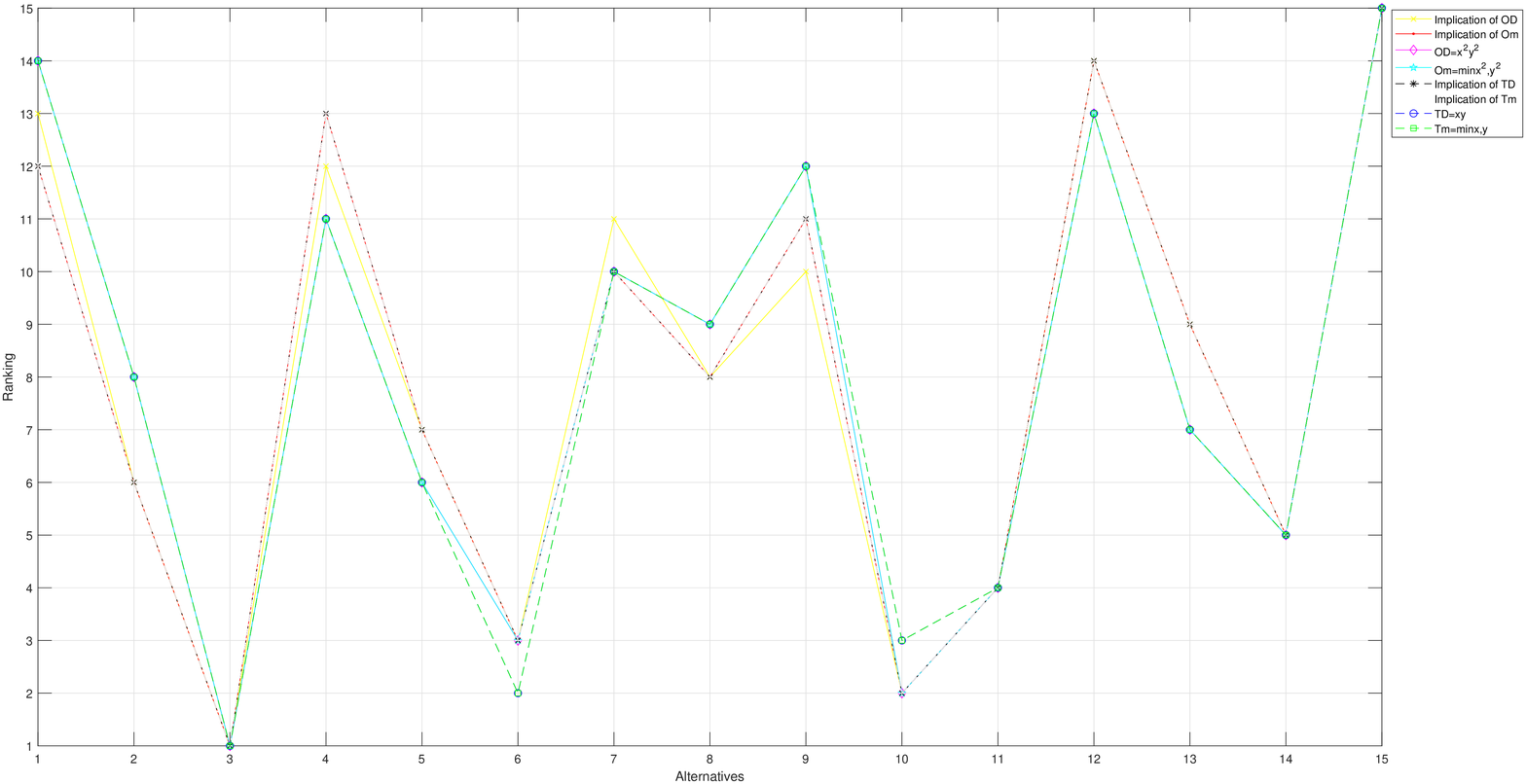}\\

\end{figure}
\end{table}

\begin{figure}[t] \caption{The rankings based on different fuzzy logical operators}\label{OITTIO}
  \centering
  \includegraphics[width=6cm]{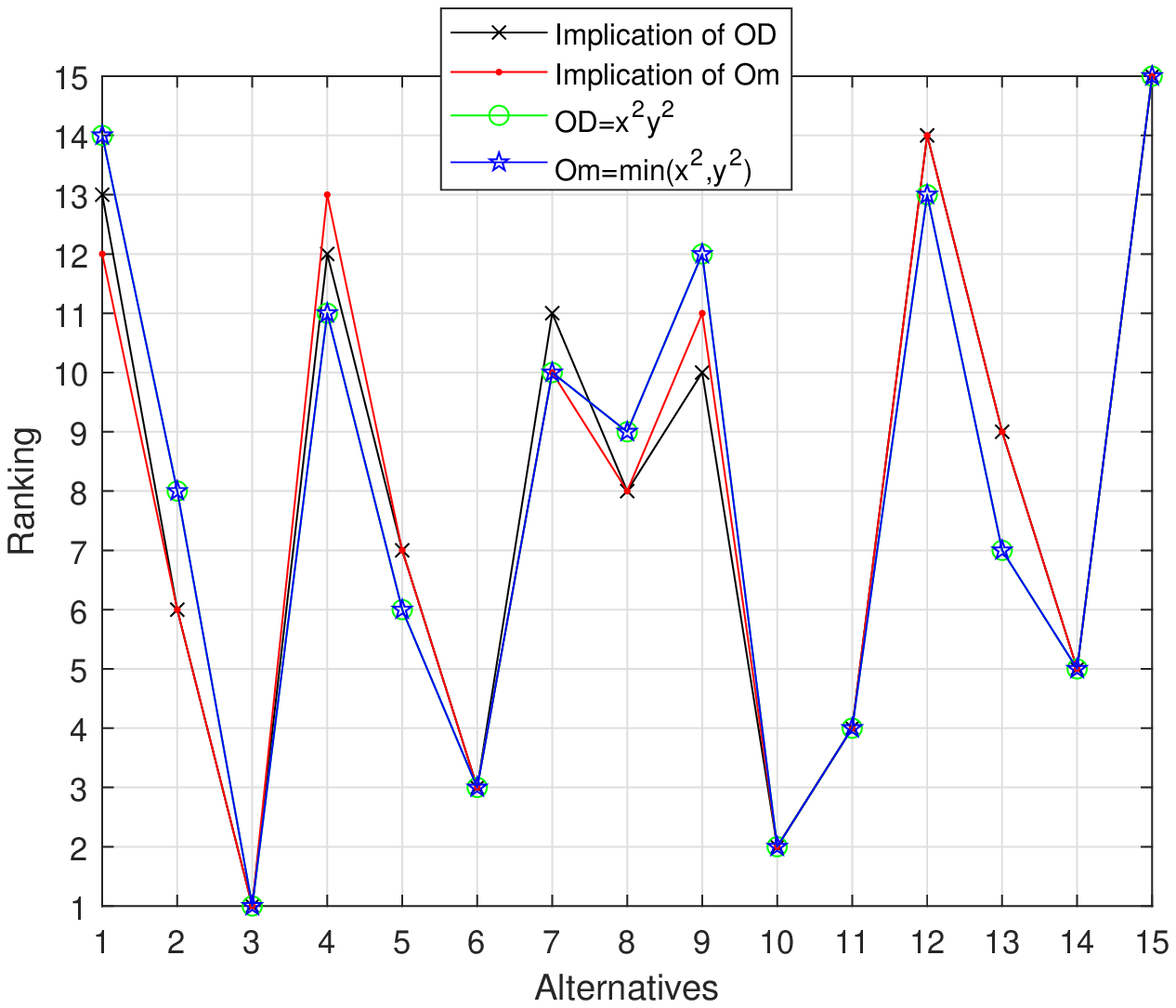}\includegraphics[width=6cm]{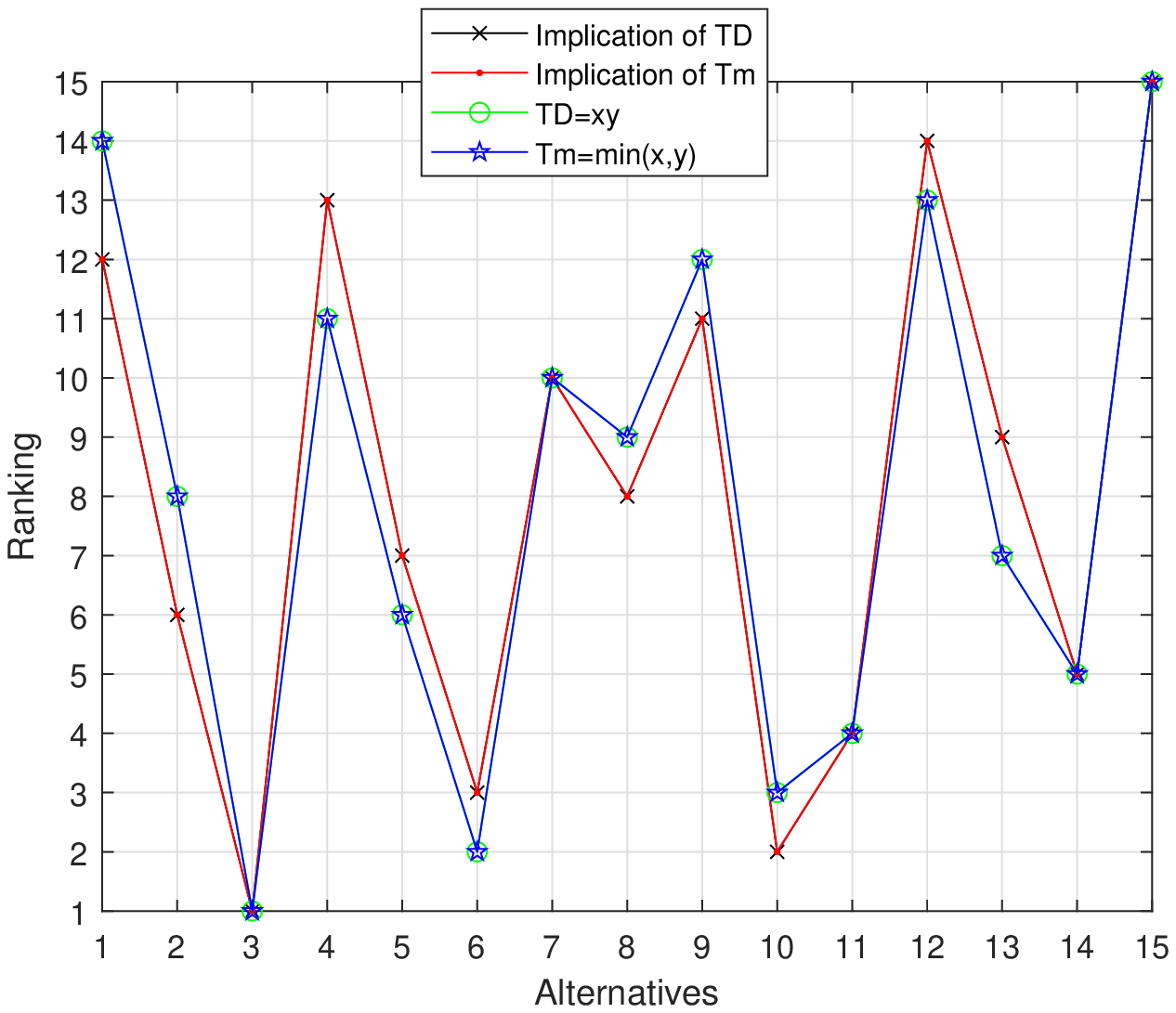}\\
  \includegraphics[width=6cm]{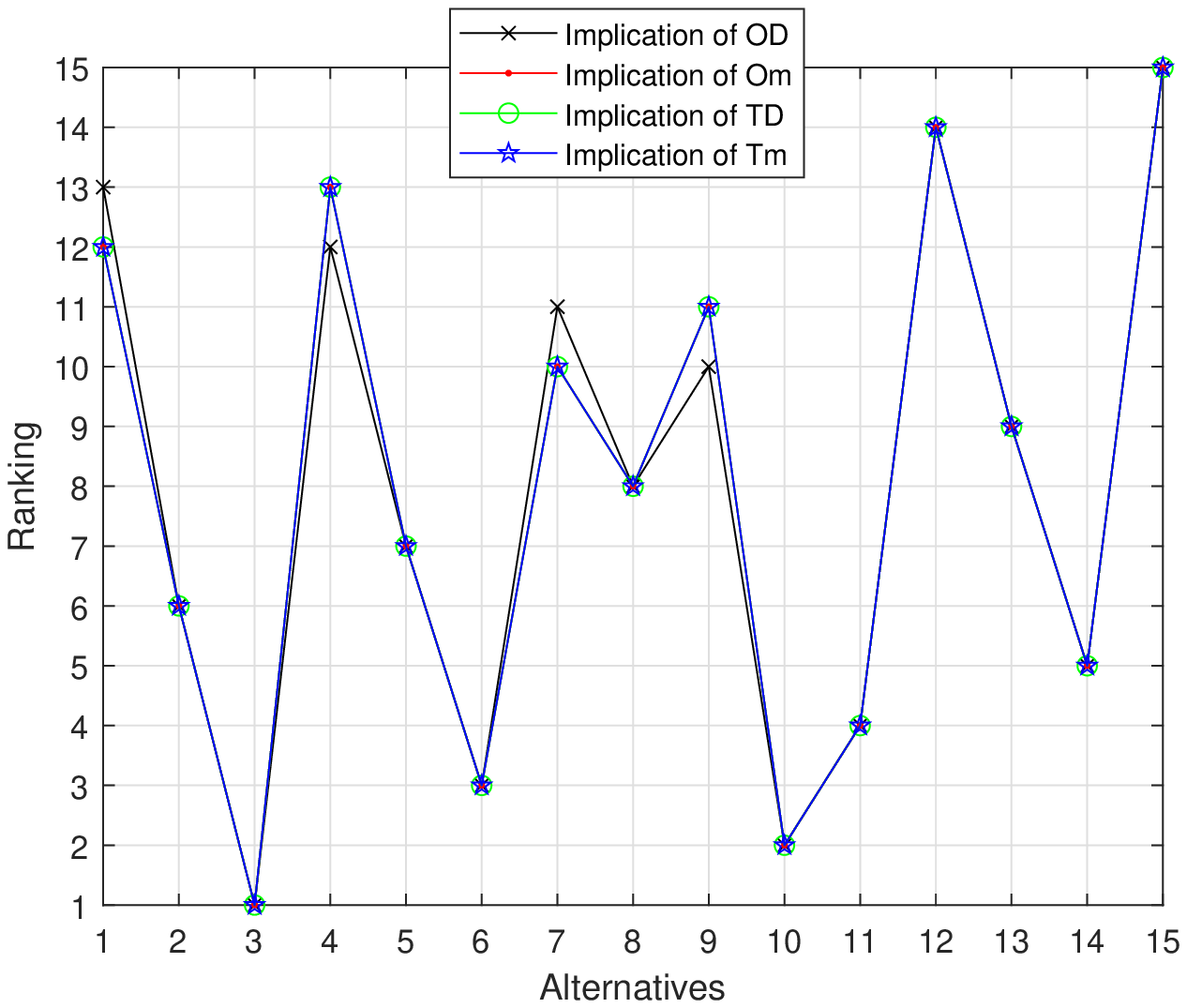}\includegraphics[width=6cm]{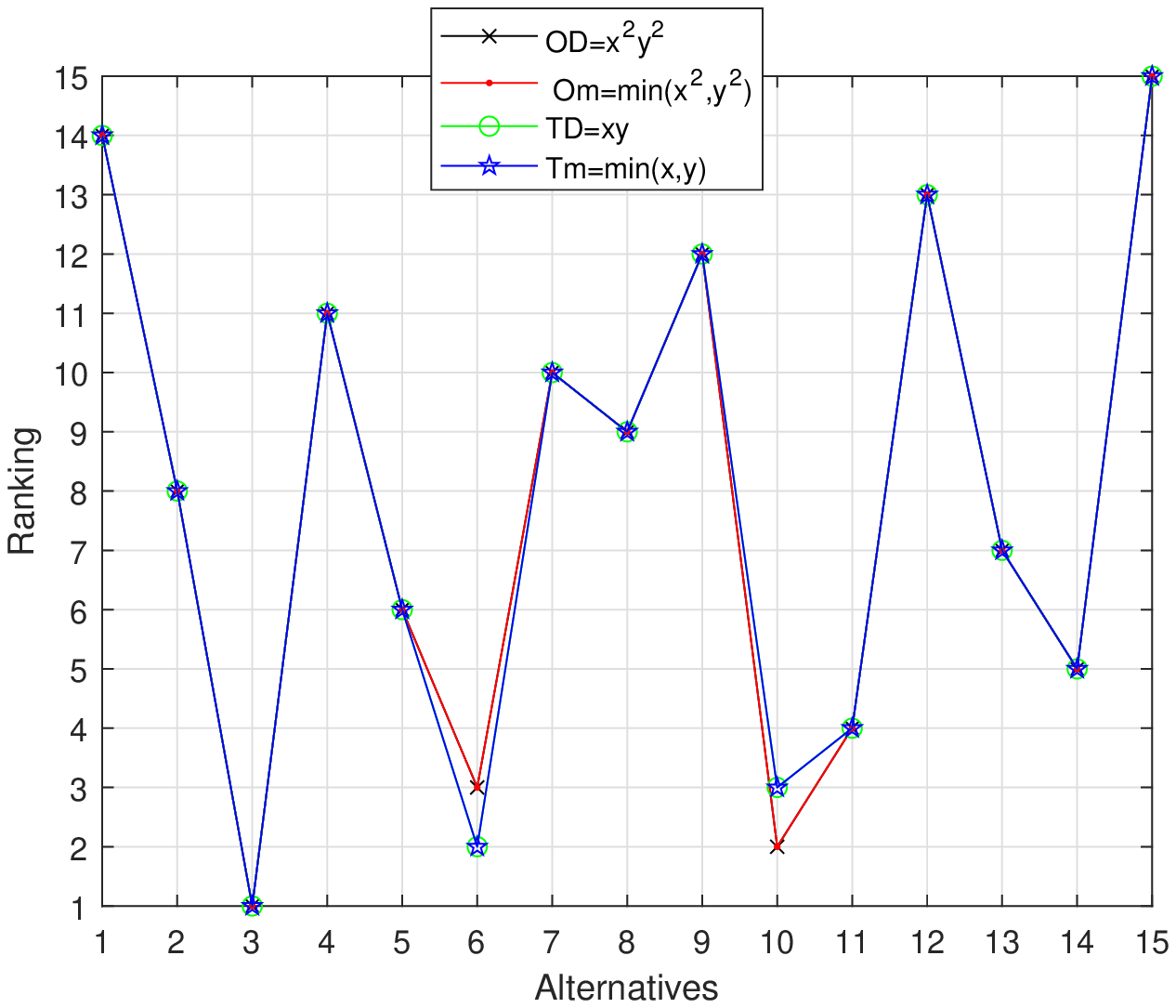}\\

\end{figure}

According to Figure \ref{AC} and Figure \ref{OITTIO}, we find that although the fuzzy logical operators which compose the models are different, the sorting results still show a high degree of consistency. Considering about Figure \ref{AC}, this picture shows that the impact of eight different logical operators respectively on ranking results that the line graphs formed based on the sorting results all reflect close trends. In Figure \ref{OITTIO}, the sorting results which are displayed in Table \ref{ranking2} are discussed in four categories, they are respectively between different types of fuzzy logical operators and fuzzy logical operators, between different implications and implications, and between logical operators and their corresponding implications,  all giving the same optimal solution and the direction of sorting approximation. The above means the decision-making results of our approach do not change significantly with the change of the fuzzy logical operators, which further illustrates the effectiveness and objectivity of our methodology. In comparison to the different fuzzy logical operators of the same type, the diverse type of fuzzy logical operators lead to bigger changes in the rankings, this subtle difference is due to the fact that the overlap function is inherently different from the $t$-norm. Besides, the selection of different fuzzy logical operators can reflect the preferences of decision-makers and adapt to the practical application background.
\subsubsection{The analyses with the existing methods}
In previous work, we put forward a new fuzzy TOPSIS methodology for the MADM issues without known weight based on two newly proposed models, which are in light of the advantages of fuzzy rough set theory in solving uncertain information. In order to further verify the effectiveness of our new method, the results of us are compared with the existing approaches in this part. The sorting results are shown in Table \ref{ranking3}.

\begin{table}[H]
	\begin{center}\caption{The rankings based on different mothods}\label{ranking3}
\resizebox{.95\columnwidth}{!}{
\begin{tabular}{l|l}
			\hline
			METHOD& THE RANKING OF ALTERNATIVES \\
			\hline
			\rule{0pt}{10pt}
			Our method based on $(\underline{\mathfrak{C}}_{A1}(X)$, $\overline{\mathfrak{C}}_{A1}(X))$ & $x_3>x_{10}>x_6>x_{11}>x_{14}>x_2>x_5>x_8>x_{13}>x_{9}>x_{7}>x_4>x_1>x_{12}>x_{15}$\\

Our method based on $(\underline{\mathfrak{C}}_{H1}(X)$, $\overline{\mathfrak{C}}_{H1}(X))$ & $x_3>x_{10}>x_6>x_{11}>x_{14}>x_5>x_{13}>x_2>x_{8}>x_{7}>x_{4}>x_9>x_{12}>x_{1}>x_{15}$\\

		Our method based on	$(\underline{\mathcal{C}}_{A1}(X)$, $\overline{\mathcal{C}}_{A1}(X))$ & $x_3>x_{10}>x_6>x_{11}>x_{14}>x_2>x_5>x_8>x_{13}>x_{7}>x_{9}>x_1>x_4>x_{12}>x_{15}$  \\

Our method based on $(\underline{\mathcal{C}}_{H1}(X)$, $\overline{\mathcal{C}}_{H1}(X))$ & $x_3>x_{6}>x_{10}>x_{11}>x_{14}>x_5>x_{13}>x_2>x_{8}>x_{7}>x_{4}>x_9>x_{12}>x_{1}>x_{15}$\\
The WAA method \cite{yager1988ordered} & $x_3>x_{6}>x_{10}>x_{11}>x_{14}>x_2>x_{5}>x_{13}>x_{8}>x_{9}>x_{7}>x_1>x_{4}>x_{12}>x_{15}$\\
The OWA method \cite{yager1988ordered} & $x_3>x_{6}>x_{10}>x_{11}>x_{14}>x_2>x_{5}>x_{13}>x_{8}>x_{9}>x_{7}>x_1>x_{4}>x_{12}>x_{15}$\\
The OWGA method \cite{xu2003overview} & $x_6>x_{11}>x_{10}>x_{3}>x_{2}>x_{14}>x_{8}>x_{5}>x_{13}>x_{9}>x_{7}>x_4>x_{1}>x_{12}>x_{15}$\\
The PROMETHEE II method $(p=0.5)$ \cite{brans1986select} & $x_3>x_{6}>x_{10}>x_{11}>x_{14}>x_{13}>x_{2}>x_{8}>x_{5}>x_{1}>x_{7}>x_4>x_{9}>x_{12}>x_{15}$\\
The TOPSIS method \cite{hwang1981methods,kacprzak2019doubly,sun2018innovative} & $x_3>x_{6}>x_{10}>x_{11}>x_{14}>x_{2}>x_{5}>x_{13}>x_{8}>x_{9}>x_{7}>x_1>x_{4}>x_{12}>x_{15}$\\
The TOPSIS method based on an F$\beta$CAS\cite{zhan2019covering} & $x_3>x_{10}>x_{6}>x_{11}>x_{14}>x_{2}>x_{13}>x_{5}>x_{8}>x_{1}>x_{4}>x_7>x_{9}>x_{12}>x_{15}$\\
The TOPSIS method based on a VFCAS \cite{jiang2018covering} & $x_3>x_{6}>x_{10}>x_{11}>x_{14}>x_{2}>x_{5}>x_{8}>x_{13}>x_{9}>x_{7}>x_1>x_{4}>x_{12}>x_{15}$\\
The TOPSIS method based on a $\lambda$AS \cite{yu2019lambda} & $x_4>x_{6}>x_{1}>x_{8}>x_{9}>x_{7}>x_{15}>x_{2}>x_{5}>x_{10}>x_{13}>x_{12}>x_{11}>x_{14}>x_{3}$\\
The TOPSIS method based on an FCAS \cite{zhang2019topsis} & $x_3>x_{6}>x_{10}>x_{11}>x_{14}>x_{2}>x_{5}>x_{13}>x_{8}>x_{9}>x_{7}>x_1>x_{4}>x_{12}>x_{15}$\\
			\hline
		
		\end{tabular}}
	\end{center}
	
\end{table}

The value of the correlation coefficient is often used to reflect the degree of relevance between two sets of variables. Typically, these values fluctuate between $-1$ and $1$, where the positive and negative values represent positive and negative correlations, respectively. Besides, when the absolute value of the correlation coefficient is closer to $1$, the stronger the correlation between the two sides of the comparison, correspondingly, when the absolute value of the correlation coefficient is closer to $0$, the weaker the correlation between the two sides of the comparison. In order to present the relations  more visually among the sorting results based on different methods shown in Table \ref{ranking3}, we have the aid of the Spearman rank correlation coefficient \cite{myers2013research} which is one of the methods commonly used in statistics to calculate the dependence between two variables to describe them. Through numericalizing the ranking results, we can use the above method to calculate the correlations among the results obtained by each decision-making methodology. Then, the results of the calculation are presented in Table \ref{cscs} and their corresponding visualized representations are shown in Figure \ref{csss}. Where M1 is our method based on $(\underline{\mathfrak{C}}_{A1}(X)$, $\overline{\mathfrak{C}}_{A1}(X))$, M2 is our method based on $(\underline{\mathfrak{C}}_{H1}(X)$, $\overline{\mathfrak{C}}_{H1}(X))$, M3 is our method based on $(\underline{\mathcal{C}}_{A1}(X)$, $\overline{\mathcal{C}}_{A1}(X))$, M4 is our method based on $(\underline{\mathcal{C}}_{H1}(X)$, $\overline{\mathcal{C}}_{H1}(X))$, M5 is the WAA method, M6 is the OWA method, M7 is the OWGA method, M8 is the PROMETHEE II method, M9 is the TOPSIS method, M10 is the TOPSIS method based on an F$\beta$CAS, M11 is the TOPSIS method based on a VFCAS, M12 is the TOPSIS method based on a $\lambda$AS and M13 is the TOPSIS method based on an FCAS.
\begin{table}[t]
	\begin{center}	\caption{Spearman rank correlation coefficient analysis}\label{cscs}
\resizebox{.95\columnwidth}{!}{
\begin{tabular}{l|lllllllllllll}
			\hline
			$\rho$ & $M_1$  & $M_2$ & $M_3$ & $M_4$ & $M_5$ & $M_6$ & $M_7$ & $M_8$ & $M_9$ & $M_{10}$ & $M_{11}$ & $M_{12}$ & $M_{13}$  \\
			\hline
			
			$M_1$ & $1$ & $	0.9678$ & $0.9928$ & $	0.9642$ & $	0.9892$ & $	0.9892$ & $	0.9607$ & $	0.9392$ & $	0.9892$ & $	0.9535$ & $	0.9928$ & $	-0.4107$ & $	0.9892$\\
		
			$M_2$ & $0.9678$ & $	1$ & $	0.9642$ & $	0.9964$ & $0.9607$ & $	0.9607$ & $	0.9142$ & $	0.9392$ & $	0.9607$ & $	0.9464$ & $	0.9535$ & $	-0.4714$ & $	0.9607$\\
		
			$M_3$ & $0.9928$ & $	0.9642$ & $	1$ & $	0.9607$ & $	0.9892$ & $	0.9892$ & $	0.9535$ & $	0.9535$ & $	0.9892$ & $	0.9607$ & $	0.9928$ & $	-0.4214$ & $	0.9892$\\
		
			$M_4$ & $0.9642$ & $	0.9964$ & $	0.9607$ & $	1	$ & $0.9642$ & $	0.9642$ & $	0.9214$ & $	0.9428$ & $	0.9642$ & $	0.9428$ & $	0.9571$ & $	-0.4428$ & $	0.9642$\\
		
			$M_5$ & $0.9892$ & $	0.9607$ & $	0.9892$ & $	0.9642$ & $	1	$ & $1	$ & $0.9571$ & $	0.9571$ & $	1$ & $	0.9607$ & $	0.9964$ & $	-0.4142$ & $	1$\\

			$M_6$ & $0.9892$ & $	0.9607$ & $	0.9892$ & $	0.9642$ & $	1	$ & $1	$ & $0.9571$ & $	0.9571$ & $	1$ & $	0.9607$ & $	0.9964$ & $	-0.4142$ & $	1$\\

			$M_7$ & $0.9607$ & $0.9142$ & $	0.9535$ & $	0.9214$ & $	0.9571$ & $	0.9571$ & $1$ & $	0.9142$ & $	0.9571$ & $	0.9142$ & $	0.9642$ & $	-0.2821$ & $	0.9571$ \\

			$M_8$ & $0.9392$ & $	0.9392$ & $	0.9535$ & $	0.9428$ & $	0.9571$ & $	0.9571$ & $ 0.9142$ & $	1$ & $	0.9571$ & $	0.9857$ & $	0.9500$ & $	-0.3857$ & $	0.9571$ \\
			
			$M_9$ & $0.9892$ & $	0.9607$ & $	0.9892$ & $	0.9642$ & $	1$ & $	1$ & $	0.9571$ & $	0.9571$ & $	1$ & $	0.9607$ & $	0.9964$ & $	-0.4142$ & $	1$ \\

			$M_{10}$ & $0.9535$ & $	0.9464$ & $	0.9607$ & $	0.9428$ & $	0.9607$ & $	0.9607$ & $	0.9142$ & $	0.9857$ & $	0.9607$ & $1$ & $	0.9535$ & $	-0.4035$ & $	0.9607$ \\

			$M_{11}$ & $0.9928$ & $	0.9535$ & $	0.9928$ & $	0.9571$ & $	0.9964$ & $	0.9964$ & $	0.9642$ & $	0.9500$ & $	0.9964$ & $	0.9535$ & $	1$ & $	-0.3892$ & $	0.9964$ \\

			$M_{12}$ & $-0.4107$ & $	-0.4714$ & $	-0.4214$ & $	-0.4428$ & $	-0.4142$ & $	-0.4142$ & $	-0.2821$ & $	-0.3857$ & $	-0.4142$ & $	-0.4035$ & $	-0.3892$ & $	1$ & $	-0.4142$ \\

			$M_{13}$ & $0.9892$ & $	0.9607$ & $	0.9892$ & $	0.9642$ & $	1$ & $	1$ & $	0.9571$ & $	0.9571$ & $	1$ & $	0.9607$ & $	0.9964$ & $	-0.4142$ & $	1$ \\

			\hline
		
		\end{tabular}}
	\end{center}

\begin{figure}[H]
  \centering\caption{Visual Spearman rank correlation coefficient analysis}\label{csss}
  \includegraphics[width=10cm]{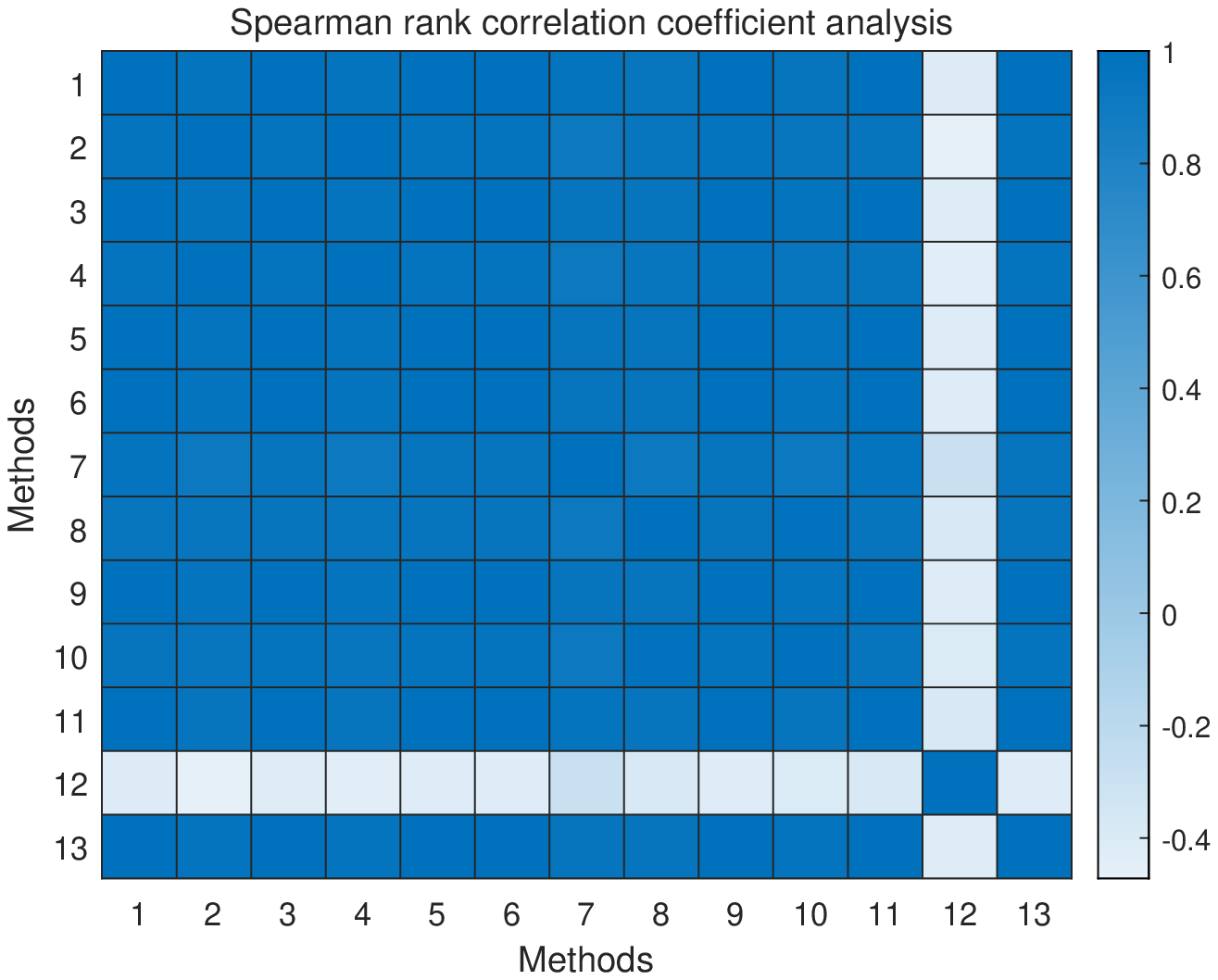}\\

\end{figure}
\end{table}

According to Table \ref{ranking3}, we find that both the results based on our four different models and the results based on other existing methods have the same optimal alternative (except for the method in \cite{xu2003overview} and the method in \cite{yu2019lambda}). In light of Figure \ref{csss}, it is more intuitive to show the high correlation, except for ranking result of the TOPSIS method based on a $\lambda$AS, between the results of our methods based on different models and other methods, and the Spearman rank correlation coefficient $\rho$ between each method is more than $0.9142$ as shown in Table \ref{cscs}. Thus, the effectiveness of the new models have been confirmed again. However, the results of our four models are quite different from M12 and the largest value of the correlation coefficients is just $-0.4107$. This does not indicate that our results are incorrect, because the other methods also reflect a weak correlation with M12 by comparing with their sort results.The case of M7 is different, although this method gives an entirely different optimal solution from others, the lowest value of dependence for M7 is more than $0.9$. In the practical sense, the ranking results by the diverse methods are different, and even giving distinct optimal alternatives is unsurprising. Therefore, the difference in the choice of decision methods reflects, to some extent, the decision maker's decision preferences.
 \section{Conclusion and future work}\label{Section 6}
 In order to extend the traditional covering based rough set to the fuzzy environment and give a different generalization from \cite{d2017fuzzy}, we have defined four original overlap function-based fuzzy operators. In addition, we also give two types of neighborhood-related fuzzy covering-based rough set models which are based on fuzzy neighborhood operators given in \cite{d2017fuzzy} and the new ones defined in the above text. First of all, six new derived fuzzy coverings have been given from an original covering, and for a finite fuzzy covering with an overlap function which satisfies (O7), we have obtained twenty four new fuzzy neighborhood operators via combining its six derived fuzzy coverings with four original operators. Furthermore, the equivalence relationships among all of the overlap function based fuzzy neighborhood operators have been discussed and they have also been grouped into seventeen groupings. In light of the classification, the partially order relations, $\leq$, among the different groups of operators have been discussed. By means of comparing with groupings and Hasse diagram of $t$-norm-based fuzzy neighborhood operators, we have concluded that the huge differences between these two kinds of fuzzy neighborhood operators are dependent on whether the constructing functions have exchange principle and their boundary conditions. Then on the basis of the fuzzy neighborhood operators proposed in \cite{d2017fuzzy} and the new fuzzy neighborhood operators investigated in this paper, we obtain two types of neighborhood-related fuzzy covering-based rough set models, and the properties and classifications of new models have been discussed and proved in detail. Finally, a new fuzzy TOPSIS method based on our models are proposed to deal with a biosynthetic nanomaterials selection issue and the effectiveness of  new models is verified in the final subsection of this article. The decision-making processes confirm that for the traditional decision-making issues, overlap functions are no less than conventional fuzzy logical operators.

However, there is a problem need to deal with, which is the relationship between the grouping $C$ and the grouping $A1$. In this paper, we fail to prove or find an example to verify the relation between them, so that we have assumed they are incomparable when we drew the Hasse diagram. We expect this problem can be approached in the future. Furthermore, there are some essential future researches about the decision-making model based on overlap functions. Comparing with $t$-norms, the researches about establishing the fuzzy rough set models based on overlap function are not many, although overlap functions are more sophisticated, more in line with reality, after all, in real-world applications, not all data can be arbitrarily combined. Therefore, the overlap function-based fuzzy rough set models are of high application values in data mining techniques, image processing techniques, and so on. In summary, the applications of overlap function still have great research prospects.
 \section*{Acknowledgements}
 This research was supported by the National Natural Science Foundation of China (Grant no. 12101500) and the Chinese Universities Scientific Fund (Grant no. 2452018054).


\begin{thebibliography}{50}
\expandafter\ifx\csname natexlab\endcsname\relax\def\natexlab#1{#1}\fi
\expandafter\ifx\csname url\endcsname\relax
  \def\url#1{\texttt{#1}}\fi
\expandafter\ifx\csname urlprefix\endcsname\relax\def\urlprefix{URL }\fi

\bibitem[{Bedregal et~al.(2013)Bedregal, Dimuro, Bustince, and
  Barrenechea}]{bedregal2013new}
Bedregal, B., Dimuro, G.~P., Bustince, H., Barrenechea, E., 2013. New results
  on overlap and grouping functions. Information Sciences 249, 148--170.

\bibitem[{Brans et~al.(1986)Brans, Vincke, and Mareschal}]{brans1986select}
Brans, J.-P., Vincke, P., Mareschal, B., 1986. How to select and how to rank
  projects: The promethee method. European journal of operational research
  24~(2), 228--238.

\bibitem[{Bustince et~al.(2010)Bustince, Fernandez, Mesiar, Montero, and
  Orduna}]{bustince2010overlap}
Bustince, H., Fernandez, J., Mesiar, R., Montero, J., Orduna, R., 2010. Overlap
  functions. Nonlinear Analysis: Theory, Methods \& Applications 72~(3-4),
  1488--1499.

\bibitem[{Bustince et~al.(2011)Bustince, Pagola, Mesiar, Hullermeier, and
  Herrera}]{bustince2011grouping}
Bustince, H., Pagola, M., Mesiar, R., Hullermeier, E., Herrera, F., 2011.
  Grouping, overlap, and generalized bientropic functions for fuzzy modeling of
  pairwise comparisons. IEEE Transactions on Fuzzy Systems 20~(3), 405--415.

\bibitem[{Cao et~al.(2018)Cao, Hu, and Qiao}]{cao2018interval}
Cao, M., Hu, B.~Q., Qiao, J., 2018. On interval (g, n)-implications and (o, g,
  n)-implications derived from interval overlap and grouping functions.
  International Journal of Approximate Reasoning 100, 135--160.

\bibitem[{Chen(2000)}]{chen2000extensions}
Chen, C.-T., 2000. Extensions of the topsis for group decision-making under
  fuzzy environment. Fuzzy sets and systems 114~(1), 1--9.

\bibitem[{da~Cruz~Asmus et~al.(2020)da~Cruz~Asmus, Dimuro, Bedregal, Sanz,
  Pereira~Jr, and Bustince}]{da2020general}
da~Cruz~Asmus, T., Dimuro, G.~P., Bedregal, B., Sanz, J.~A., Pereira~Jr, S.,
  Bustince, H., 2020. General interval-valued overlap functions and
  interval-valued overlap indices. Information Sciences 527, 27--50.

\bibitem[{De~Cock et~al.(2004)De~Cock, Cornelis, and Kerre}]{de2004fuzzy}
De~Cock, M., Cornelis, C., Kerre, E., 2004. Fuzzy rough sets: beyond the
  obvious. In: 2004 IEEE International Conference on Fuzzy Systems (IEEE Cat.
  No. 04CH37542). Vol.~1. IEEE, pp. 103--108.

\bibitem[{D'eer et~al.(2017)D'eer, Cornelis, and Godo}]{d2017fuzzy}
D'eer, L., Cornelis, C., Godo, L., 2017. Fuzzy neighborhood operators based on
  fuzzy coverings. Fuzzy Sets and Systems 312, 17--35.

\bibitem[{Degang et~al.(2006)Degang, Wenxiu, Yeung, and
  Tsang}]{degang2006rough}
Degang, C., Wenxiu, Z., Yeung, D., Tsang, E.~C., 2006. Rough approximations on
  a complete completely distributive lattice with applications to generalized
  rough sets. Information Sciences 176~(13), 1829--1848.

\bibitem[{Deng et~al.(2007)Deng, Chen, Xu, and Dai}]{deng2007novel}
Deng, T., Chen, Y., Xu, W., Dai, Q., 2007. A novel approach to fuzzy rough sets
  based on a fuzzy covering. Information Sciences 177~(11), 2308--2326.

\bibitem[{Dimuro and Bedregal(2014)}]{dimuro2014archimedean}
Dimuro, G.~P., Bedregal, B., 2014. Archimedean overlap functions: the ordinal
  sum and the cancellation, idempotency and limiting properties. Fuzzy Sets and
  Systems 252, 39--54.

\bibitem[{Dimuro and Bedregal(2015)}]{dimuro2015residual}
Dimuro, G.~P., Bedregal, B., 2015. On residual implications derived from
  overlap functions. Information Sciences 312, 78--88.

\bibitem[{Dimuro et~al.(2016)Dimuro, Bedregal, Bustince, Asi{\'a}in, and
  Mesiar}]{dimuro2016additive}
Dimuro, G.~P., Bedregal, B., Bustince, H., Asi{\'a}in, M.~J., Mesiar, R., 2016.
  On additive generators of overlap functions. Fuzzy Sets and Systems 287,
  76--96.

\bibitem[{Dubois and Prade(1990)}]{dubois1990rough}
Dubois, D., Prade, H., 1990. Rough fuzzy sets and fuzzy rough sets.
  International Journal of General System 17~(2-3), 191--209.

\bibitem[{Elkano et~al.(2016)Elkano, Galar, Sanz, and
  Bustince}]{elkano2016fuzzy}
Elkano, M., Galar, M., Sanz, J., Bustince, H., 2016. Fuzzy rule-based
  classification systems for multi-class problems using binary decomposition
  strategies: on the influence of n-dimensional overlap functions in the fuzzy
  reasoning method. Information Sciences 332, 94--114.

\bibitem[{Feng et~al.(2012)Feng, Zhang, and Mi}]{feng2012reduction}
Feng, T., Zhang, S.-P., Mi, J.-S., 2012. The reduction and fusion of fuzzy
  covering systems based on the evidence theory. International Journal of
  Approximate Reasoning 53~(1), 87--103.

\bibitem[{Gomez et~al.(2016)Gomez, Rodr{\'\i}guez, Yanez, and
  Montero}]{gomez2016new}
Gomez, D., Rodr{\'\i}guez, J.~T., Yanez, J., Montero, J., 2016. A new
  modularity measure for fuzzy community detection problems based on overlap
  and grouping functions. International Journal of Approximate Reasoning 74,
  88--107.

\bibitem[{Hwang and Yoon(1981)}]{hwang1981methods}
Hwang, C.-L., Yoon, K., 1981. Methods for multiple attribute decision making.
  In: Multiple attribute decision making. Springer, pp. 58--191.

\bibitem[{Jensen and Shen(2004)}]{jensen2004fuzzy}
Jensen, R., Shen, Q., 2004. Fuzzy--rough attribute reduction with application
  to web categorization. Fuzzy sets and systems 141~(3), 469--485.

\bibitem[{Jiang et~al.(2018)Jiang, Zhan, and Chen}]{jiang2018covering}
Jiang, H., Zhan, J., Chen, D., 2018. Covering-based variable precision
  $(\mathcal {I},\mathcal {T}) $-fuzzy rough sets with applications to
  multiattribute decision-making. IEEE Transactions on Fuzzy Systems 27~(8),
  1558--1572.

\bibitem[{Jurio et~al.(2013)Jurio, Bustince, Pagola, Pradera, and
  Yager}]{jurio2013some}
Jurio, A., Bustince, H., Pagola, M., Pradera, A., Yager, R.~R., 2013. Some
  properties of overlap and grouping functions and their application to image
  thresholding. Fuzzy Sets and Systems 229, 69--90.

\bibitem[{Kacprzak(2019)}]{kacprzak2019doubly}
Kacprzak, D., 2019. A doubly extended topsis method for group decision making
  based on ordered fuzzy numbers. Expert Systems with Applications 116,
  243--254.

\bibitem[{Li et~al.(2008)Li, Leung, and Zhang}]{li2008generalized}
Li, T.-J., Leung, Y., Zhang, W.-X., 2008. Generalized fuzzy rough approximation
  operators based on fuzzy coverings. International Journal of Approximate
  Reasoning 48~(3), 836--856.

\bibitem[{Llamazares(2018)}]{llamazares2018analysis}
Llamazares, B., 2018. An analysis of the generalized todim method. European
  Journal of Operational Research 269~(3), 1041--1049.

\bibitem[{Ma(2016)}]{ma2016two}
Ma, L., 2016. Two fuzzy covering rough set models and their generalizations
  over fuzzy lattices. Fuzzy Sets and Systems 294, 1--17.

\bibitem[{Myers et~al.(2013)Myers, Well, and Lorch~Jr}]{myers2013research}
Myers, J.~L., Well, A.~D., Lorch~Jr, R.~F., 2013. Research design and
  statistical analysis. Routledge.

\bibitem[{Pawlak(1982)}]{pawlak1982rough}
Pawlak, Z., 1982. Rough sets. International journal of computer \& information
  sciences 11~(5), 341--356.

\bibitem[{Pedrycz et~al.(2011)Pedrycz, Chen, Rubin, and Lee}]{pedrycz2011risk}
Pedrycz, W., Chen, S.-C., Rubin, S.~H., Lee, G., 2011. Risk evaluation through
  decision-support architectures in threat assessment and countering terrorism.
  Applied Soft Computing 11~(1), 621--631.

\bibitem[{Pomyka{\l}a(1988)}]{pomykala1988definability}
Pomyka{\l}a, J., 1988. On definability in the nondeterministic information
  system. Bulletin of the Polish Academy of Sciences. Mathematics 36~(3-4),
  193--210.

\bibitem[{Pomykala(1987)}]{pomykala1987approximation}
Pomykala, J.~A., 1987. Approximation operations in approximation space. Bull.
  Pol. Acad. Sci 35~(9-10), 653--662.

\bibitem[{Qiao(2019{\natexlab{a}})}]{qiao2019binary}
Qiao, J., 2019{\natexlab{a}}. On binary relations induced from overlap and
  grouping functions. International Journal of Approximate Reasoning 106,
  155--171.

\bibitem[{Qiao(2019{\natexlab{b}})}]{qiao2019distributive}
Qiao, J., 2019{\natexlab{b}}. On distributive laws of uninorms over overlap and
  grouping functions. IEEE Transactions on Fuzzy Systems 27~(12), 2279--2292.

\bibitem[{Sun et~al.(2018)Sun, Guan, Yi, and Zhou}]{sun2018innovative}
Sun, G., Guan, X., Yi, X., Zhou, Z., 2018. An innovative topsis approach based
  on hesitant fuzzy correlation coefficient and its applications. Applied Soft
  Computing 68, 249--267.

\bibitem[{Wang et~al.(2019)Wang, Huang, Shao, and Fan}]{wang2019fuzzy}
Wang, C., Huang, Y., Shao, M., Fan, X., 2019. Fuzzy rough set-based attribute
  reduction using distance measures. Knowledge-Based Systems 164, 205--212.

\bibitem[{Wang et~al.(2007)Wang, Tsang, Zhao, Chen, and
  Yeung}]{wang2007learning}
Wang, X., Tsang, E.~C., Zhao, S., Chen, D., Yeung, D.~S., 2007. Learning fuzzy
  rules from fuzzy samples based on rough set technique. Information sciences
  177~(20), 4493--4514.

\bibitem[{Wang and Liu(2019)}]{wang2019modularity}
Wang, Y.-M., Liu, H.-W., 2019. The modularity condition for overlap and
  grouping functions. Fuzzy Sets and Systems 372, 97--110.

\bibitem[{Xu and Da(2003)}]{xu2003overview}
Xu, Z., Da, Q.-L., 2003. An overview of operators for aggregating information.
  International Journal of intelligent systems 18~(9), 953--969.

\bibitem[{Yager(1988)}]{yager1988ordered}
Yager, R.~R., 1988. On ordered weighted averaging aggregation operators in
  multicriteria decisionmaking. IEEE Transactions on systems, Man, and
  Cybernetics 18~(1), 183--190.

\bibitem[{Yang and Hu(2017)}]{yang2017some}
Yang, B., Hu, B.~Q., 2017. On some types of fuzzy covering-based rough sets.
  Fuzzy sets and Systems 312, 36--65.

\bibitem[{Yao(1998{\natexlab{a}})}]{yao1998comparative}
Yao, Y., 1998{\natexlab{a}}. A comparative study of fuzzy sets and rough sets.
  Information sciences 109~(1-4), 227--242.

\bibitem[{Yao(1998{\natexlab{b}})}]{yao1998relational}
Yao, Y., 1998{\natexlab{b}}. Relational interpretations of neighborhood
  operators and rough set approximation operators. Information sciences
  111~(1-4), 239--259.

\bibitem[{Yao and Yao(2012)}]{yao2012covering}
Yao, Y., Yao, B., 2012. Covering based rough set approximations. Information
  Sciences 200, 91--107.

\bibitem[{Yu et~al.(2019)Yu, Cai, and Li}]{yu2019lambda}
Yu, B., Cai, M., Li, Q., 2019. A $\lambda$-rough set model and its applications
  with topsis method to decision making. Knowledge-Based Systems 165, 420--431.

\bibitem[{Zadeth(1965)}]{zadeth1965fuzzy}
Zadeth, L., 1965. Fuzzy sets. Information and control 8~(3), 338--353.

\bibitem[{Zakowski(1983)}]{zakowski1983approximations}
Zakowski, W., 1983. Approximations in the space (u, $\pi$). Demonstratio
  mathematica 16~(3), 761--770.

\bibitem[{Zhan et~al.(2019)Zhan, Sun, and Alcantud}]{zhan2019covering}
Zhan, J., Sun, B., Alcantud, J. C.~R., 2019. Covering based multigranulation
  (i, t)-fuzzy rough set models and applications in multi-attribute group
  decision-making. Information Sciences 476, 290--318.

\bibitem[{Zhang et~al.(2019)Zhang, Zhan, and Yao}]{zhang2019topsis}
Zhang, K., Zhan, J., Yao, Y., 2019. Topsis method based on a fuzzy covering
  approximation space: An application to biological nano-materials selection.
  Information Sciences 502, 297--329.

\bibitem[{Zhang et~al.(2021)Zhang, Qin, and Li}]{zhang2021distributivity}
Zhang, T.-h., Qin, F., Li, W.-h., 2021. On the distributivity equations between
  uni-nullnorms and overlap (grouping) functions. Fuzzy Sets and Systems 403,
  56--77.

\bibitem[{Zhou and Yan(2021)}]{zhou2021migrativity}
Zhou, H., Yan, X., 2021. Migrativity properties of overlap functions over
  uninorms. Fuzzy Sets and Systems 403, 10--37.

\end{thebibliography}

\end{document}